\documentclass{amsart}
\usepackage[hmargin=30mm, vmargin=30mm]{geometry}
\usepackage[english]{babel}
\usepackage{amsmath,amssymb}
\usepackage{hyperref}
\usepackage{color}

\newtheorem{thmintro}{Theorem}

\newtheorem{theorem}{Theorem}[section]
\newtheorem{corollary}[theorem]{Corollary}
\newtheorem{lemma}[theorem]{Lemma}
\newtheorem{prop}[theorem]{Proposition}
\theoremstyle{definition}
\newtheorem{definition}[theorem]{Definition}
\newtheorem{remark}[theorem]{Remark}
\newtheorem{example}[theorem]{Example}
\newtheorem*{notation}{Notation}

\numberwithin{equation}{section}

\newcommand{\Rcal}{\mathcal{R}}
\newcommand{\RRR}{\mathrm{R}}
\newcommand{\LL}{\mathrm{L}}
\newcommand{\LLL}{\mathcal{L}}
\newcommand{\CC}{\mathbb{C}}
\newcommand{\NN}{\mathbb{N}}
\newcommand{\RR}{\mathbb{R}}
\newcommand{\g}{\mathfrak{g}}
\newcommand{\n}{\mathfrak{n}}
\newcommand{\hh}{\mathfrak{h}}
\newcommand{\inv}{^{-1}}
\newcommand{\co}{\colon\thinspace}
\newcommand{\norm}[1]{|\!|#1|\!|}
\newcommand{\Bignorm}[1]{\Big|\!\Big|#1\Big|\!\Big|}

\DeclareMathOperator{\Ad}{Ad}
\DeclareMathOperator{\Int}{Int}
\DeclareMathOperator{\ad}{ad}
\DeclareMathOperator{\Aut}{Aut}
\DeclareMathOperator{\Diff}{Diff}
\DeclareMathOperator{\GL}{GL}
\DeclareMathOperator{\Lie}{L}
\DeclareMathOperator{\Hom}{Hom}
\DeclareMathOperator{\End}{End}
\DeclareMathOperator{\Evol}{Evol}
\DeclareMathOperator{\evol}{evol}

\DeclareMathOperator{\id}{id}
\DeclareMathOperator{\dist}{d}
\DeclareMathOperator{\Mult}{Mult}

\DeclareMathOperator{\dd}{d\!}

\begin{document}

\renewcommand{\proofname}{{\bf Proof}}

\title{Half-Lie groups}
\author[Timoth\'ee Marquis]{Timoth\'ee Marquis$^*$}
\address{Department Mathematik, FAU Erlangen-Nuernberg, Cauerstrasse 11, 91058 Erlangen, Germany}
\email{marquis@math.fau.de}
\thanks{$^*$Supported by a Marie Curie Intra-European Fellowship}

\author[Karl-Hermann Neeb]{Karl-Hermann Neeb$^\dagger$}
\address{Department Mathematik, FAU Erlangen-Nuernberg, Cauerstrasse 11, 91058 Erlangen, Germany}
\email{neeb@math.fau.de}
\thanks{$^\dagger$Supported by DFG-grant NE 413/9-1, 
``Invariante Konvexit\"at in unendlich-dimensionalen Lie-Algebren''} 

\subjclass[2010]{22E65, 58B25} 
\keywords{Topological group, Lie group, regular Lie group}

\begin{abstract} In this paper we study the Lie theoretic properties of 
a class of topological groups 
which carry a Banach manifold structure but whose multiplication is not 
smooth. If $G$ and $N$ are Banach--Lie groups and 
$\pi : G \to \Aut(N)$ is a homomorphism defining a continuous action 
of $G$ on $N$, then $H := N \rtimes_\pi G$ is a Banach manifold with 
a topological group structure for which the left multiplication maps  
are smooth, but the right multiplication maps need not to be. We show that 
these groups share surprisingly many properties with Banach--Lie groups: 
(a) for every regulated function $\xi : [0,1] \to \hh$ the initial value problem 
$\dot\gamma(t) = \gamma(t)\xi(t)$, $\gamma(0)= 1_H$, has a solution 
and the corresponding evolution map from curves in $\hh$ to curves in 
$H$ is continuous; (b) every $C^1$-curve $\gamma$ with $\gamma(0) = 1$ and 
$\gamma'(0) = x$ satisfies $\lim_{n \to \infty} \gamma(t/n)^n = \exp(tx)$; 
(c) the Trotter formula holds for $C^1$ one-parameter groups in $H$; 
(d) the subgroup $N^\infty$ of elements with smooth $G$-orbit maps in $N$ 
carries a natural Fr\'echet--Lie group structure for which the 
$G$-action is smooth; (e) the resulting Fr\'echet--Lie group 
$H^\infty := N^\infty \rtimes G$ is also regular in the sense of (a). 
\end{abstract}

\maketitle

\section{Introduction}

The theory of infinite dimensional Lie groups can be 
developed very naturally in the context of Lie groups modelled on 
locally convex spaces, so called {\it locally convex Lie groups}. 
For more details on this theory, we recommend the survey 
article \cite{NeJap} or the forthcoming monograph \cite{GlNe16}. 
The theory of locally convex Lie groups has, however, certain drawbacks, 
the most serious one being that the Inverse and Implicit Function Theorem fail 
beyond the class of Banach manifolds. In some situations one can still 
use the Nash--Moser Theorem, but this theorem is difficult to apply because 
its assumptions are often hard to verify. 

It is for this reason that, early on in infinite dimensional Lie
theory, people have tried to ``approximate'' Fr\'echet--Lie groups 
by certain Banach manifolds to work in a context where the 
analytic tools, such as existence of solutions of ODEs and inverse function 
results, can be applied, and then perform a passage to the Fr\'echet limit, 
which often is a projective limit of topological groups. 
The most prominent situation where this strategy has been applied 
with great success is the analysis of diffeomorphism groups 
of compact smooth manifolds $M$. The group $\Diff(M)$ of smooth 
diffeomorphisms carries the structure of a Fr\'echet--Lie group, 
but the usual construction of charts also applies to the groups 
$\Diff^k(M)$ of $C^k$-diffeomorphisms for any $k  \geq 1$. 
This provides the structure of a $C^k$-manifold structure on each 
of the topological groups $\Diff^k(M)$, but 
neither multiplication nor inversion are smooth. Only the 
right multiplications are smooth maps. This kind of ``weak Lie group structure'' 
is usually dealt with in the context of 
{\it ILB (inverse limit of Banach) Lie groups}, 
which play an important role in geometric analysis (cf.\ \cite{AK98}, \cite{EMi99}). 
The Lie theory of these groups has been developed 
by H.~Omori and his collaborators in a series of papers 
culminating in \cite{MOKY85} (see also Omori's monograph \cite{Omori97}). 

Another context where Lie theory leaves its natural ``smooth context'' is 
in the theory of group representations. There one studies 
representations $\pi : G \to \GL(V)$ of a Lie group $G$ on a 
Fr\'echet space $V$, for which the action of $G$ on $V$ is continuous, but 
not in general smooth. This seemingly weak requirement is dictated by 
the applications where smoothness of the action of $G$ on $V$ would 
be much too strong. This phenomenon is well-known from  the theory 
of one-parameter semigroups on Banach spaces, where norm continuity 
is much too strong and strong continuity is the natural regularity 
assumption. Applying Lie theoretic methods to continuous 
representations can be a difficult task, but recently some quite effective 
tools to overcome these difficulties have been developed 
(see in particular \cite{NeSa13}). To apply these tools, one has to assume that the addition 
in the Lie algebra $\Lie(G)$ of the Lie group $G$ under consideration 
is compatible with the topological group structure in the sense that 
$G$ has the {\it Trotter property}, i.e., for every $x_1, x_2 \in \Lie(G)$, 
\[ \exp_G(t(x_1 + x_2)) = \lim_{n \to \infty} 
\Big(\exp_G\Big(\frac{t}{n}x_1\Big)\exp_G\Big(\frac{t}{n}x_2\Big)\Big)^n \] 
holds uniformly on compact subsets of $\RR$. 
All locally exponential Lie groups (i.e., groups for which the exponential function 
is a local diffeomorphism in $0$) have this property, and this includes in 
particular all Banach--Lie groups (\cite{NeJap}). Beyond the Banach context, 
the Trotter property is often hard to verify, but in \cite{NeSa13} this is 
done for diffeomorphism groups of compact manifolds and the Virasoro group. 
These examples already show that the Trotter property is much weaker 
than the local exponentiality of the group, and even more so, than 
the local convergence of the Baker--Campbell--Dynkin--Hausdorff 
series in a $0$-neighbourhood of the Lie algebra.
Much of this has recently been facilitated by H.~Gl\"ockner's new regularity 
results \cite{Gl15} which provide also solutions to differential 
equations of the form $\dot\gamma(t) = \gamma(t)\xi(t)$, where 
$\xi$ is not necessarily continuous. So $\xi$ could also be a Riemannian 
step function, or a uniform limit of step functions, i.e., a regulated 
function. Recently M.~Hanusch succeeded in showing that the strong Trotter 
property, i.e., that every $C^1$-curve $\gamma$ with $\gamma(0) = 1$ and 
$\gamma'(0) = x$ satisfies $\lim_{n \to \infty} \gamma(t/n)^n = \exp(tx)$,  
follows from the local $\mu$-convexity of $G$ (\cite{Ha18}). 
This  is a continuity 
requirement on the multiplication expressed in terms of seminorms 
and local charts which is intimately related to regularity properties 
of the Lie group (\cite{Ha17}). 
 \\ 

In the present paper we pursue a more detailed analysis of a 
class of Banach manifolds which carry a topological group 
structure but which are not Lie groups, namely semidirect products 
$H := N \rtimes_\pi G$, where both $N$ and $G$ are Banach--Lie groups,
 but the homomorphism $\pi : G \to \Aut(N)$ only defines a continuous 
action $\pi^{\wedge}\co G\times N\to N, (g,n)\mapsto\pi(g)n$. 
Then $H$ is a topological group and a smooth Banach manifold. 
The multiplication on $H$, however, is in general \emph{not} smooth as the right multiplication maps $$\rho_{n_2,g_2}\co H\to H, \ (n_1,g_1)\mapsto (n_1,g_1)(n_2,g_2)=(n_1\cdot \pi(g_1)n_2,g_1g_2)$$ are, in general, only continuous.  
On the other hand, the left multiplication maps are smooth. 
Following the terminology of \cite{KMR15}, we call such a topological group, with a Banach manifold structure and smooth left multiplication maps, a \emph{(left) half-Lie group}.
The semidirect products $H = N \rtimes_\pi G$ as above constitute an 
important class of examples of half-Lie groups because they are still rather 
well-behaved but they also display many of the pathologies of half-Lie 
groups that are not Lie groups. Here 
already the case $G =\RR$ is very interesting. 
Another interesting class of examples arises for 
the group $N = C^k(M,K)$, $M$ a compact smooth manifold, 
$K$ a Banach--Lie group and $k \in \NN_0$, where the action of $G$ on $N$ 
comes from a smooth action of $G$ on $M$. 
The aim of this paper is to understand to which extent the 
Lie theoretic properties of Banach--Lie groups survive 
in the framework of these half-Lie groups.

We now describe our main results in more detail.
The first problem to investigate is the existence of an \emph{exponential function} $\exp_H\co \hh\to H$ on the tangent space $\hh:=T_1H=T_1N\times T_1G$ of $H$ at the identity, that is, of a map $\exp_H\co\hh\to H$ such that for each $x\in\hh$, the curve $\gamma_x\co I=[0,1]\to H$ defined by $\gamma_x(t):=\exp_H(tx)$ is a $C^1$ solution to the initial value problem (IVP)
$$\gamma_x'(t)=\gamma_x(t).x, \quad \gamma_x(0)=1_H,$$
where we denoted for each $h\in H$ by $\hh\to T_h(H),  
x\mapsto h.x:=T_1(\lambda_h)x$ the action of the tangent map of 
$\lambda_h\co H\to H, g\mapsto hg$ at the identity on $\hh$. More generally, for each continuous curve $\gamma\in C^0(I,\hh)$, one may ask whether there exists a solution $\eta\in C^1(I,H)$ to the IVP
\begin{equation}\label{eqn:IVP_intro}
\eta'(t)=\eta(t).\gamma(t), \quad \eta(0)=1_H.
\end{equation}
If a solution to (\ref{eqn:IVP_intro}) exists, then it is unique (see \cite[\S II.3]{NeJap}), yielding an \emph{evolution map} 
$$\Evol_H\co C^0(I,\hh)\to C^1(I,H), \ \gamma\mapsto\eta.$$ If, moreover, $\Evol_H$ is continuous, then the group $H$ is called \emph{$C^0$-regular}. For instance, Banach--Lie groups are $C^0$-regular (see \cite[Theorem~C]{Gl15}).

In \cite{Gl15}, H.~Gl\"ockner further defined a concept of \emph{$R$-regularity} (which implies $C^0$-regularity), by replacing the space $C^0(I,\hh)$ in the above definition with the space $R(I,\hh)$ of \emph{regulated functions}, that is, of functions in $\LL^{\infty}(I,\hh)$ that are uniform limits of step functions (see \S\ref{section:RReg} below for more details 
and the precise meaning of the IVP (\ref{eqn:IVP_intro}) in this context). H.~Gl\"ockner then proves that Banach--Lie groups are $R$-regular, and derives various 
important consequences. 

Our first result implies in particular that the half-Lie group $H:=N\rtimes_{\pi}G$ possesses a (continuous) exponential map $\exp_H\co \hh\to H$ (see \S\ref{section:RROHH}):

\begin{thmintro}\label{thmintro:Rreg}
Let $G,N$ be Banach--Lie groups, and let $\pi\co G\to \Aut(N)$ define a continuous action. Then the half-Lie group $H:=N\rtimes_{\pi}G$ is $R$-regular.
\end{thmintro}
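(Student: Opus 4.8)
The plan is to reduce $R$-regularity of $H = N \rtimes_\pi G$ to the known $R$-regularity of the Banach--Lie groups $N$ and $G$ (Gl\"ockner's theorem, \cite{Gl15}), by solving the differential equation \eqref{eqn:IVP_intro} in the two factors separately and tracking continuity of the construction. Write a regulated curve in $\hh = T_1N \times T_1G$ as $\xi = (\xi_N, \xi_G)$ with $\xi_N \in R(I, T_1N)$ and $\xi_G \in R(I, T_1G)$, and look for the solution in the form $\eta(t) = (n(t), g(t))$. Because the semidirect product structure is ``upper triangular'' -- the $G$-coordinate does not see the $N$-coordinate -- the $g$-component equation decouples: $g$ must be the $G$-evolution $\Evol_G(\xi_G)$, which exists and depends continuously on $\xi_G$ since $G$ is a Banach--Lie group. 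The main work is then the $N$-component, where the equation becomes a left-logarithmic-derivative equation in $N$ with a right-hand side that has been ``twisted'' by the already-determined curve $g(t) \in G$: roughly $\delta^l(n)(t) = \pi(g(t))\big(\text{something built from } \xi_N(t)\big)$, so that $n = \Evol_N$ of a new regulated curve in $T_1N$ obtained by applying the (merely continuous!) action of $G$ pointwise.

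First I would make the semidirect-product computation precise: differentiating $t \mapsto (n(t)n_0, \ldots)$ and using the explicit formula for $\lambda_{(n,g)}$ on $H$, I would verify that $(n,g)$ solves \eqref{eqn:IVP_intro} for $\xi = (\xi_N,\xi_G)$ if and only if $g = \Evol_G(\xi_G)$ and $n$ solves $\delta^l(n) = \zeta$ in $N$, where $\zeta(t) := \dd\pi(g(t))^{-1}\,\xi_N(t)$ -- here $\dd\pi(g) \in \GL(T_1N)$ is the derivative at $1_N$ of the automorphism $\pi(g)$ of $N$, which is a Banach--Lie group automorphism, hence smooth, so $\dd\pi(g)$ makes sense. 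The key point is that $g \in C^1(I,G) \subset C^0(I,G)$ is continuous, the map $G \times T_1N \to T_1N$, $(g,v)\mapsto \dd\pi(g)^{-1}v$ is continuous (this should follow from continuity of the $G$-action on $N$ together with smoothness of $N$; this is exactly the kind of statement one expects to have available from the earlier sections, or one proves it here from joint continuity of $\pi^\wedge$), and $\xi_N$ is regulated; a product/composition of a continuous curve with a regulated curve is again regulated, so $\zeta \in R(I, T_1N)$. Then $n := \Evol_N(\zeta)$ exists because $N$ is $R$-regular.

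Having produced a candidate solution $\eta = (\Evol_N(\zeta), \Evol_G(\xi_G))$, I would check it is genuinely $C^1$ into $H$ (the $N$- and $G$-components are each $C^1$, and $H$ carries the product manifold structure, so this is immediate) and that it satisfies the IVP; uniqueness is automatic by \cite[\S II.3]{NeJap}, which is cited in the excerpt. This shows $\Evol_H$ is defined on all of $R(I,\hh)$. For continuity of $\Evol_H$, I would argue componentwise: $\xi_G \mapsto \Evol_G(\xi_G)$ is continuous $R(I,T_1G) \to C^1(I,G)$ by $R$-regularity of $G$; the assignment $(\xi_G, \xi_N) \mapsto \zeta$ is continuous $R(I,T_1G)\times R(I,T_1N) \to R(I,T_1N)$ because $\Evol_G$ is continuous into $C^1 \subset C^0$ and then pointwise application of the continuous map $(g,v)\mapsto \dd\pi(g)^{-1}v$, together with a uniform-boundedness/equicontinuity argument on the compact interval $I$, is continuous from $C^0(I,G)\times R(I,T_1N)$ to $R(I,T_1N)$; finally $\zeta \mapsto \Evol_N(\zeta)$ is continuous by $R$-regularity of $N$. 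Composing, $\Evol_H$ is continuous, so $H$ is $R$-regular.

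The main obstacle I anticipate is precisely the continuity of the twisting step $(g,\xi_N)\mapsto \big(t\mapsto \dd\pi(g(t))^{-1}\xi_N(t)\big)$ at the level of regulated functions equipped with the $\LL^\infty$-topology: one only knows $\pi$ defines a \emph{continuous} action, not a smooth one, so one must carefully extract from joint continuity of $\pi^\wedge\co G\times N\to N$ the corresponding statement for the linearized action $G\times T_1N\to T_1N$ and then control it uniformly over the compact set $g(I) \subset G$ (using that a continuous image of a compact set is compact, hence that $\{\dd\pi(g) : g \in g(I)\}$ is equicontinuous as a family of operators, which keeps the uniform limits of step functions uniform). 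Handling this carefully -- and making sure the $R$-norm estimates behave well when one varies $g$ in $C^0$ -- is where the real content lies; everything else is bookkeeping with the semidirect-product formulas and the black-box $R$-regularity of $N$ and $G$.
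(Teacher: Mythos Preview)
Your approach is essentially the paper's: decouple the $G$-component and solve it via $\Evol_G$, then twist the $N$-component by the linearized action and solve via $\Evol_N$, with the main technical work being exactly what you identify --- continuity of the linearized action $G\times\n\to\n$ (the paper's Lemma~\ref{lemma:continuity_Tpi}) and the resulting continuity of the twisting map on regulated curves via a compactness/uniform-continuity argument (the paper's Claims~1 and~2 in the proof of Theorem~\ref{thm:Rreg_Glinear}). One small correction: the twisted right-hand side is $\zeta(t) = \dot\pi(g(t))\,\xi_N(t)$, not $\dot\pi(g(t))^{-1}\xi_N(t)$, since left multiplication by $(n,g)$ in $N\rtimes_\pi G$ acts on the $N$-factor via $\pi(g)$, so the tangent map at $1_H$ sends $(v,x)$ to $(n.\dot\pi(g)v,\,g.x)$.
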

\noindent

\medskip

A second natural problem is to understand whether $\hh$ admits a Lie algebra structure, and whether such a structure can be, as in the classical case, reconstructed from the space $\Hom^1(\RR,H)$ of $C^1$ one-parameter subgroups of $H$. We recall that for any Banach--Lie group $\Gamma$ with Lie algebra $\Lie(\Gamma)$ and exponential function $\exp_{\Gamma}\co \Lie(\Gamma)\to\Gamma$, the Lie algebra structure on $\Lie(\Gamma)$ can be obtained by using the identification $\Hom^1(\RR,\Gamma)\stackrel{\approx}{\longrightarrow}\Lie(\Gamma), \gamma\mapsto \gamma'(0)$, and the fact that $\Gamma$ has the \emph{Trotter property}, i.e., 
for all $\gamma_1,\gamma_2\in\Hom^1(\RR,\Gamma)$,
\begin{equation*}
\lim_{n\to\infty}\big(\gamma_1(\tfrac{t}{n})\gamma_2(\tfrac{t}{n})\big)^n=\exp_{\Gamma}\big(t(\gamma_1'(0)+\gamma_2'(0))\big)
\end{equation*}
holds uniformly in $t$ on compact subsets of $\RR$, 
as well as the \emph{commutator property}, i.e., for all $\gamma_1,\gamma_2\in\Hom^1(\RR,\Gamma)$,
\begin{equation*}
\lim_{n\to\infty}\big(\gamma_1(\tfrac{\sqrt{t}}{n})\gamma_2(\tfrac{\sqrt{t}}{n})\gamma_1(-\tfrac{\sqrt{t}}{n})\gamma_2(-\tfrac{\sqrt{t}}{n})\big)^{n^2}=\exp_{\Gamma}\big(t[\gamma_1'(0),\gamma_2'(0)]\big)
\end{equation*}
holds uniformly in $t$ on compact subsets of $[0,\infty[$. Actually $\Gamma$ has 
the \emph{strong Trotter property} (which implies both the Trotter and commutator properties within the class of locally convex Lie groups, see \cite[Theorem~H]{Gl15}), that is, for each $C^1$-curve $\gamma\co I\to \Gamma$ with $\gamma(0)=1_{\Gamma}$, 
\begin{equation*}
\lim_{n\to\infty}\gamma(\tfrac{t}{n})^n=\exp_{\Gamma}(t\gamma'(0))
\end{equation*}
uniformly in $t$ on compact subsets of $[0,\infty[$.

It turns out that the space $\hh$ carries, in general, no natural Lie bracket, 
so that one cannot speak of 
the ``Lie algebra of $H$'' (see \S\ref{section:TFFH}). On the other hand, we show, as in \cite[Theorem~I]{Gl15}, that the $R$-regularity of $H$ implies that $H$ has the strong Trotter property.

\begin{thmintro}\label{thmintro:STP}
Let $G,N$ be Banach--Lie groups, and let $\pi\co G\to \Aut(N)$ define a continuous action. Then the half-Lie group $H:=N\rtimes_{\pi}G$ has the strong Trotter property.
\end{thmintro}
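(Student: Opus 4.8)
The plan is to deduce the strong Trotter property from the $R$-regularity of $H$ (Theorem~\ref{thmintro:Rreg}), following the pattern of \cite[Theorem~I]{Gl15}: I would realize the iterated products $\gamma(t/n)^{n}$ as values of the evolution map on regulated functions that converge, uniformly in $t$ on compacta, to the constant curve with value $t\gamma'(0)$, and then invoke continuity of $\Evol_{H}$. The basic tool is the left logarithmic derivative $\delta(\eta)(s):=T_{\eta(s)}(\lambda_{\eta(s)\inv})\eta'(s)\in\hh$ of a curve $\eta\in C^{1}(I,H)$; since the left multiplications of $H$ are smooth and inversion is continuous, one checks $\delta(\eta)\in C^{0}(I,\hh)$, together with the elementary identities $\delta(\lambda_{c}\circ\eta)=\delta(\eta)$ for fixed $c\in H$ and $\delta(\eta\circ\phi)(s)=\phi'(s)\,\delta(\eta)(\phi(s))$ for smooth $\phi\co I\to I$, as well as $\Evol_{H}(\delta(\eta))=\eta$ whenever $\eta(0)=1_{H}$; by uniqueness of solutions of \eqref{eqn:IVP_intro} (cf.\ \S\ref{section:RReg}), the last identity persists, in the regulated sense, for continuous curves that are piecewise $C^{1}$ with finitely many breakpoints.

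Fix a $C^{1}$-curve $\gamma\co I\to H$ with $\gamma(0)=1_{H}$, set $x:=\gamma'(0)$ and $\Lambda:=\delta(\gamma)\in C^{0}(I,\hh)$ (so $\Lambda(0)=x$), and fix $T>0$. For $n\geq T$ and every $t\in[0,T]$ I would define $\eta^{(t)}_{n}\co I\to H$ piecewise by
\[ \eta^{(t)}_{n}(s):=\gamma(t/n)^{k-1}\cdot\gamma\big(\tfrac{t}{n}(ns-(k-1))\big)\qquad\big(s\in[\tfrac{k-1}{n},\tfrac{k}{n}],\ k=1,\dots,n\big). \]
The two defining expressions agree at each breakpoint $s=k/n$, so $\eta^{(t)}_{n}$ is a continuous curve, of class $C^{1}$ on each subinterval, with $\eta^{(t)}_{n}(0)=1_{H}$ and $\eta^{(t)}_{n}(1)=\gamma(t/n)^{n}$. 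On the $k$-th subinterval $\eta^{(t)}_{n}$ is $\gamma$ precomposed with the affine map $s\mapsto\tfrac{t}{n}(ns-(k-1))$ of slope $t$ and left-translated by the fixed element $\gamma(t/n)^{k-1}$, so the two identities for $\delta$ give that
\[ \xi^{(t)}_{n}:=\delta(\eta^{(t)}_{n}),\qquad \xi^{(t)}_{n}(s)=t\cdot\Lambda\big(\tfrac{t}{n}(ns-(k-1))\big)\ \ \text{for }s\in[\tfrac{k-1}{n},\tfrac{k}{n}], \]
is a regulated function, and $\eta^{(t)}_{n}=\Evol_{H}(\xi^{(t)}_{n})$.

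Since $\tfrac{t}{n}(ns-(k-1))\in[0,t/n]\subseteq[0,T/n]$ whenever $s\in[\tfrac{k-1}{n},\tfrac{k}{n}]$, uniform continuity of $\Lambda$ on $I$ yields
\[ \sup_{s\in I}\norm{\xi^{(t)}_{n}(s)-tx}\ \le\ T\cdot\sup_{u\in[0,T/n]}\norm{\Lambda(u)-\Lambda(0)}\ \xrightarrow[\ n\to\infty\ ]{}\ 0 \]
uniformly in $t\in[0,T]$. Thus the maps $f_{n}\co[0,T]\to R(I,\hh)$, $t\mapsto\xi^{(t)}_{n}$, and $f_{\infty}\co[0,T]\to R(I,\hh)$, $t\mapsto(s\mapsto tx)$, are continuous, $f_{n}\to f_{\infty}$ uniformly, and $f_{\infty}([0,T])=\{\,(s\mapsto tx):t\in[0,T]\,\}$ is compact in $R(I,\hh)$. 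By Theorem~\ref{thmintro:Rreg} the evolution map on regulated functions is continuous, hence so is $\Phi:=\ev_{1}\circ\Evol_{H}\co R(I,\hh)\to H$, $\xi\mapsto\Evol_{H}(\xi)(1)$; and a continuous map composed with a uniformly convergent sequence whose limit map has relatively compact image is again uniformly convergent (cover $f_{\infty}([0,T])$ by finitely many balls on which $\Phi$ oscillates by $<\varepsilon$). Therefore $\Phi\circ f_{n}\to\Phi\circ f_{\infty}$ uniformly on $[0,T]$, i.e.\ $\gamma(t/n)^{n}=\eta^{(t)}_{n}(1)\to\Evol_{H}(s\mapsto tx)(1)=\exp_{H}(tx)$ uniformly on $[0,T]$; since $T>0$ was arbitrary, $H$ has the strong Trotter property.

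The delicate point, and essentially the only place where the failure of $H$ to be a Lie group intervenes, is the good behaviour of the left logarithmic derivative: one needs $\delta(\gamma)$ to be continuous for $C^{1}$-curves $\gamma$ (equivalently, continuity of the left trivialization $TH\to H\times\hh$, which for $H=N\rtimes_{\pi}G$ one checks directly, using local boundedness of the linearized action along a compact parameter interval together with its strong continuity), and one needs the concatenated curves $\eta^{(t)}_{n}$ to be bona fide solutions of the regulated IVP \eqref{eqn:IVP_intro}. Both facts are already built into the analysis behind Theorem~\ref{thmintro:Rreg}, where curves driven by regulated right-hand sides with finitely many jumps are precisely the objects under study; granting them, the argument above runs exactly as in the Banach--Lie case, the remaining ingredients (the telescoping of the concatenation and the soft uniform-convergence lemma) being routine.
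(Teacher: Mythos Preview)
Your proposal is correct and takes essentially the same approach as the paper's proof (Proposition~\ref{prop:basiclimit} combined with Theorem~\ref{thm:Rreg_Glinear}): both realize $\gamma(t/n)^n$ as $\evol_H$ applied to the left logarithmic derivative of the same piecewise-$C^1$ concatenation, show that these regulated curves converge uniformly in $(s,t)$ to the constant curve $c_{t\gamma'(0)}$, and then invoke continuity of $\evol_H$ together with compactness of $\{c_{t\gamma'(0)}:t\in[0,T]\}$. The paper packages the last step as a tube-lemma argument (its Claim~1), while you phrase it as the soft lemma that a continuous map post-composed with a uniformly convergent sequence with relatively compact limit image converges uniformly; these are the same device.
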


In our setting, the strong Trotter property does not immediately imply the Trotter property, as for two $C^1$-curves $\gamma_1,\gamma_2\co\RR\to H$, the curve $\gamma(t):=\gamma_1(t)\gamma_2(t)$ need not be $C^1$. Nevertheless, with some extra work, we can show that $H$ has the Trotter property as well. 

\begin{thmintro}\label{thmintro:Trotter}
Let $G,N$ be Banach--Lie groups, and let $\pi\co G\to \Aut(N)$ define a continuous action. Then the half-Lie group $H:=N\rtimes_{\pi}G$ has the Trotter property.
\end{thmintro}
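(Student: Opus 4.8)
The plan is to reduce the Trotter property for $H = N \rtimes_\pi G$ to the already-established strong Trotter property (Theorem~\ref{thmintro:STP}) together with $R$-regularity (Theorem~\ref{thmintro:Rreg}). The obstruction, as noted just before the statement, is that if $\gamma_1, \gamma_2 \co \RR \to H$ are $C^1$ one-parameter subgroups, the pointwise product $\gamma(t) := \gamma_1(t)\gamma_2(t)$ need not be a $C^1$-curve, so one cannot simply quote the strong Trotter property for $\gamma$. So the first step is to write $\gamma_i(t) = (n_i(t), g_i(t))$ in coordinates, where $t \mapsto n_i(t) \in N$ and $t \mapsto g_i(t) \in G$ are $C^1$ one-parameter subgroups of the Banach--Lie groups $N$ and $G$ respectively (the one-parameter subgroup property forces this product structure). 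Then the product is
\[
\gamma_1(t)\gamma_2(t) = \bigl(n_1(t)\cdot \pi(g_1(t))n_2(t),\ g_1(t)g_2(t)\bigr),
\]
and the $G$-component $g_1(t)g_2(t)$ is a genuine $C^1$-curve in the Banach--Lie group $G$, for which the classical Trotter formula applies. The trouble is entirely concentrated in the $N$-component, because $t \mapsto \pi(g_1(t))n_2(t)$ is merely continuous.

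Next I would set $x_i := \gamma_i'(0) \in \hh$ and try to identify the candidate limit $\exp_H(t(x_1 + x_2))$ via $R$-regularity: $\exp_H(t(x_1+x_2))$ is $\Evol_H$ applied to the constant curve with value $x_1 + x_2$, and by Theorem~\ref{thmintro:Rreg} this evolution exists and depends continuously on the ``right logarithmic derivative.'' The key computation is to show that the right logarithmic derivative $\delta^r(c_n)(t)$ of the curve $c_n(t) := \bigl(\gamma_1(t/n)\gamma_2(t/n)\bigr)^n$ converges, as $n \to \infty$, in $R(I,\hh)$ — ideally uniformly — to the constant $x_1 + x_2$. This is where the product structure helps: on the $G$-side, the classical Banach Trotter estimate gives that the $G$-component of $\delta^r(c_n)$ converges uniformly to the $G$-component of $x_1 + x_2$; on the $N$-side, one expands $\gamma_1(t/n)\gamma_2(t/n)$ to first order, $n_1(t/n) = \exp_N(\tfrac tn x_1^N + o(1/n))$ and similarly for $n_2$, and uses continuity of $\pi^\wedge$ at $(1_G, 1_N)$ to control the cross term $\pi(g_1(t/n))n_2(t/n)$, so that the product's $N$-component is $\exp_N\bigl(\tfrac tn(x_1^N + x_2^N) + o(1/n)\bigr)$ uniformly in $t$ on compacta. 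Raising to the $n$-th power and computing logarithmic derivatives then yields the desired convergence in $R(I,\hh)$.

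Having shown $\delta^r(c_n) \to x_1 + x_2$ in $R(I,\hh)$, I would invoke the continuity of $\Evol_H$ from Theorem~\ref{thmintro:Rreg}: since $c_n = \Evol_H(\delta^r(c_n))$ and the right-hand arguments converge to the constant $x_1+x_2$, we get $c_n \to \Evol_H(x_1+x_2) = \exp_H(\,\cdot\,(x_1+x_2))$ uniformly on compacta, which is exactly the Trotter formula for $H$. The main obstacle I anticipate is the second step: carefully establishing that the logarithmic-derivative expansion of the product curve $\gamma_1(t/n)\gamma_2(t/n)$ converges \emph{in the regulated topology} rather than just pointwise — this requires quantitative control, uniform in $t$, of the failure of smoothness of $\pi^\wedge$, using only its continuity, and this is precisely the point where half-Lie groups depart from Banach--Lie groups. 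A secondary technical point is to make sure the first-order expansions $n_i(s) = \exp_N(s x_i^N) \exp_N(o(s))$ hold with remainder terms that are uniform in the relevant compact parameter range, which follows from the Banach--Lie group structure of $N$ but must be tracked through the $n$-fold product.
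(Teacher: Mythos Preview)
Your proposal has a genuine gap at its core. The curve $c_n(t):=(\gamma_1(t/n)\gamma_2(t/n))^n$, viewed as a function of $t$, is in general only \emph{continuous}, not in $AC_R(I,H)$: writing $\gamma_i=(\alpha_i,\beta_i)$, its $N$-component is built from factors of the form $\pi(\beta_1(t/n))\alpha_2(t/n)$ and iterated $\pi$-translates thereof, which are not differentiable unless $\alpha_2$ takes values in $C^1$-elements for $\pi$. Hence the logarithmic derivative you want to compute is simply undefined. (Concretely: with $N$ a Banach space, $G=\RR$, $\gamma_1(t)=(0,t)$, $\gamma_2(t)=(tv,0)$ for $v\notin\n^1$, the product $\gamma_1(s)\gamma_2(s)=(s\,\pi(s)v,\,s)$ is differentiable at $0$ but nowhere else.) If instead you reparametrise piecewise in an auxiliary variable $s\in I$, alternately tracing out bits of $\gamma_1$ and $\gamma_2$ so that each piece is $C^1$, the resulting logarithmic derivative oscillates between values near $2tx_1$ and $2tx_2$ on consecutive intervals of length $1/(2n)$; this does \emph{not} converge in $R(I,\hh)$ with its sup-norm topology, so the continuity of $\Evol_H$ cannot close the argument. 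There is also a smaller slip: the $N$-components $\alpha_i$ are \emph{not} one-parameter subgroups of $N$; they satisfy the cocycle relation $\alpha_i(s+t)=\alpha_i(s)\cdot\pi(\beta_i(s))\alpha_i(t)$.

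The paper's route is quite different and uses the one-parameter subgroup hypothesis in an essential way. Since $\beta_1$ is a one-parameter subgroup of $G$, one can construct an auxiliary $C^1$-curve $\alpha_3\co I\to N$ with $\alpha_3'(0)=\alpha_2'(0)$ such that $t\mapsto\pi(\beta_1(t))\alpha_3(t)$ is itself $C^1$: take $(\alpha_3(t),\beta_1(t)):=\exp_H\big(t(\alpha_2'(0),\beta_1'(0))\big)$, and the one-parameter subgroup relation yields $\pi(\beta_1(t))\alpha_3(t)=\alpha_3(t)^{-1}\alpha_3(2t)$ (Lemma~\ref{lemma:property_star}). This produces a \emph{genuinely $C^1$} curve $c_2(t):=(\alpha_1(t)\cdot\pi(\beta_1(t))\alpha_3(t),\,\beta_1(t)\beta_2(t))$ with $c_2'(0)=\gamma_1'(0)+\gamma_2'(0)$, to which the strong Trotter property applies directly. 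The remaining, and substantial, work in Theorem~\ref{thm:TFimproved} is a direct hard estimate---carried out entirely in $N$ via local-multiplication bounds and the joint continuity of $\dot{\pi}^{\wedge}$---showing that $c_1(t/n)^{-n}c_2(t/n)^n\to 1_H$ uniformly, where $c_1:=\gamma_1\gamma_2$. No convergence of logarithmic derivatives enters this comparison.
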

\noindent
We actually prove a stronger result, generalising both Theorems~\ref{thmintro:STP} and \ref{thmintro:Trotter} (see Theorem~\ref{thm:TFimproved} below for a precise statement).
As a surprising side result, we further show in \S\ref{section:COPSOH} that, if 
$N$ is abelian, then any continuous one-parameter subgroup of $H$ is conjugate 
to a smooth one-parameter subgroup, hence of the form 
$t \mapsto g \exp_H(tx)g^{-1}$ for some $g \in H$, $x \in \hh$.

\medskip

In representation theory, an important technique consists in the passage 
from a continuous representation on a Banach space to the subspace of 
smooth vectors, i.e., the elements with smooth orbit maps, on which the Lie algebra 
acts naturally. In this context, a third problem is to ask whether the subgroup 
$$N^{\infty}:=\{n\in N \ | \ \textrm{$G\to N, \ g\mapsto \pi(g)n$ is smooth}\}$$ of \emph{smooth elements} of $N$ for the action $\pi$ carries a natural Lie group structure. Building on results from \cite{Ne10b}, where the above question is shown to have a positive answer
when $N$ is a Banach space, we prove that $N^{\infty}$ has a Fr\'echet--Lie group structure for which the induced action ${\pi_{\infty}\co G\to\Aut(N^{\infty})}, g\mapsto\pi(g)|_{N^{\infty}}$ is smooth. This implies in particular the following (see \S\ref{section:FLGSOH}).

\begin{thmintro}
Let $G,N$ be Banach--Lie groups and $\pi\co G\to \Aut(N)$ define a continuous action. 
Then $N^{\infty}$ carries a natural Fr\'echet--Lie group structure for which the 
action $\pi_\infty$ is smooth. In particular, 
the semidirect product 
$H^{\infty}:=N^{\infty}\rtimes_{\pi_{\infty}}G$ is a Fr\'echet--Lie group. 
\end{thmintro}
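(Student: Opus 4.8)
The plan is to construct the Fréchet--Lie group structure on $N^\infty$ by a method parallel to the one used in \cite{Ne10b} for the case where $N$ is a Banach space, replacing the linear structure there with the local chart provided by the exponential map of the Banach--Lie group $N$. First I would fix the logarithm $\log_N\co U\to W$, a diffeomorphism between an open identity neighbourhood $U\subseteq N$ and an open zero neighbourhood $W\subseteq\n:=\Lie(N)$. For each $g\in G$ the map $\pi(g)\in\Aut(N)$ induces a continuous linear map $\Lie(\pi(g))\co\n\to\n$, and the resulting homomorphism $\pi_{\n}\co G\to\GL(\n)$ defines a continuous linear $G$-action on the Banach space $\n$. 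I would then show that $n\in N$ lies in $N^\infty$ (i.e.\ has smooth $G$-orbit map) if and only if, after translating a neighbourhood of $1$ in the orbit into $\n$ via $\log_N$, the corresponding $\n$-valued orbit map is smooth; more precisely, that $\log_N(n)\in\n^\infty$, where $\n^\infty\subseteq\n$ is the space of smooth vectors for $\pi_\n$. This requires relating smoothness of $g\mapsto\pi(g)n$ as an $N$-valued map to smoothness of $g\mapsto\Lie(\pi(g))v$ as an $\n$-valued map; the key point is that for $g$ in a small identity neighbourhood the orbit map stays inside $U$, and $\log_N$ is a local diffeomorphism, while for large $g$ one uses the cocycle/equivariance identity $\log_N(\pi(g_1g_2)n)$ expressed through $\pi(g_1)$ acting on $\pi(g_2)n$ and the chain rule.

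Having identified $N^\infty$ with a subset of $N$ whose image under $\log_N$ is governed by $\n^\infty$, I would transport the Fréchet topology: by the Banach-space case \cite{Ne10b}, $\n^\infty$ carries a natural Fréchet space structure (the projective limit topology defined by the seminorms $v\mapsto\sup_{g\in C}\|d^k\pi_\n(\cdot)v\|$ over compact $C\subseteq G$ and $k\in\NN_0$, or equivalently via a suitable Sobolev-type system), for which the $G$-action on $\n^\infty$ is smooth. I would then declare a subset of $N^\infty$ containing $1$ to be a chart domain if its $\log_N$-image is open in $\n^\infty$, and check that the resulting atlas, obtained by left translating this chart by elements of $N^\infty$, is smooth. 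The crucial verification is that the transition maps are smooth for the Fréchet structure: these are built from the group multiplication of $N$ read in the $\log_N$ chart (a smooth map of Banach manifolds, given locally by the BCH series of $\n$) composed with inclusion $\n^\infty\hookrightarrow\n$ and the fact that multiplication by a fixed element of $N^\infty$ preserves $\n^\infty$ and acts smoothly on it. One must check that $N^\infty$ is a subgroup — closure under products and inverses follows from the chain rule applied to $g\mapsto\pi(g)(n_1n_2)=\pi(g)n_1\cdot\pi(g)n_2$ and $g\mapsto\pi(g)(n^{-1})=(\pi(g)n)^{-1}$ together with smoothness of multiplication and inversion in the Banach--Lie group $N$ — and that with the above charts it becomes a topological group, indeed a Fréchet--Lie group.

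Next I would verify that the inclusion $N^\infty\hookrightarrow N$ is a smooth homomorphism (immediate from the chart construction, since $\log_N$ is used on both sides) and, more importantly, that the action $\pi_\infty\co G\to\Aut(N^\infty)$ is smooth, i.e.\ that $\pi_\infty^\wedge\co G\times N^\infty\to N^\infty$ is a smooth map of (Banach $\times$ Fréchet)-manifolds. In the chart $\log_N$ this action is conjugate to $(g,v)\mapsto\log_N\big(\pi(g)\exp_N(v)\big)$, which near the identity is a smooth map $G\times\n^\infty\to\n^\infty$ by the Banach-space case together with smoothness of $\exp_N,\log_N$; the general case is reduced to a neighbourhood of the identity by the equivariance of $\log_N$-translation and the group law, exactly as in the construction of the atlas. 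Finally, once $N^\infty$ is a Fréchet--Lie group with smooth $G$-action $\pi_\infty$, the semidirect product $H^\infty:=N^\infty\rtimes_{\pi_\infty}G$ is a Fréchet--Lie group by the standard construction of semidirect products of locally convex Lie groups with smooth action (see \cite{NeJap}), since both factors are locally convex (Fréchet, resp.\ Banach) Lie groups and $\pi_\infty^\wedge$ is smooth.

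\textbf{Main obstacle.} The hard part will be the passage from the linear (Banach-space) case of \cite{Ne10b} to the nonlinear group $N$: one must show carefully that composing with the local diffeomorphism $\log_N$ (and the BCH multiplication in its image) does not destroy smoothness for the Fréchet structure on $\n^\infty$ — equivalently, that the nonlinear operations involved map $\n^\infty$ to $\n^\infty$ and are smooth there. This amounts to establishing a chain-rule/composition stability statement for smooth vectors under the fixed smooth maps $\exp_N$ and $\log_N$ and under multiplication by elements of $N^\infty$, which is where all the analytic content of the theorem is concentrated; the semidirect-product conclusion is then formal.
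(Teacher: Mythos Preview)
Your strategy is correct and essentially mirrors the paper's argument: identify $N^\infty\cap\exp_N(V)$ with $V^\infty:=V\cap\n^\infty$ via the exponential chart, import the Fr\'echet topology on $\n^\infty$ from \cite{Ne10b}, show the local BCH multiplication $*\co V_1^\infty\times V_1^\infty\to\n^\infty$ is smooth, verify that $\Ad(n)\co\n^\infty\to\n^\infty$ is smooth for each $n\in N^\infty$, and then invoke \cite[Theorem~II.2.1]{NeJap} to globalise the local Lie group to a Fr\'echet--Lie group structure on $N^\infty$; smoothness of $\pi_\infty^\wedge$ is then checked first on $G\times\exp_N(V^\infty)$ (where it reduces to the linear action $\dot\pi^\wedge_\infty$, smooth by \cite{Ne10b}) and extended by factoring $\pi(g)(n_0n)=\pi(g)n_0\cdot\pi(g)n$.

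The one place where the paper is more explicit than your outline, and which is exactly the ``main obstacle'' you flag, is the mechanism for verifying that a given map into $\n^\infty$ is smooth for the Fr\'echet topology. The paper isolates this as a reusable criterion (its Lemma~\ref{lemma:corestriction_smoothbis}): a map $h\co U\to\n^\infty$ is smooth as soon as the $\n$-valued map $(g,y)\mapsto\dot\pi(g)h(y)$ is smooth on $U_G\times U$. This reduces every Fr\'echet smoothness check (local multiplication, $\Ad(n)$, the action map) to a Banach smoothness check, which is where the equivariance identities you mention ($\dot\pi(g)(x*y)=\dot\pi(g)x*\dot\pi(g)y$, $\dot\pi(g)\Ad(n)v=\Ad(\pi(g)n)\dot\pi(g)v$) do the work. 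Your proposal implicitly relies on exactly this device without naming it; once you make it explicit, the rest of your plan goes through as written.
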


Finally, we investigate in \S\ref{section:RROH} the $R$-regularity of the Lie group $H^{\infty}$.

\begin{thmintro}
Let $G,N$ be Banach--Lie groups, and let $\pi\co G\to \Aut(N)$ define a continuous action. Then $H^{\infty}:=N^{\infty}\rtimes_{\pi_{\infty}}G$ is $R$-regular with a smooth evolution map. In particular, $H^{\infty}$ has the strong Trotter and commutator properties.
\end{thmintro}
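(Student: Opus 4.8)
The plan is to establish $R$-regularity of $H^{\infty} = N^{\infty} \rtimes_{\pi_\infty} G$ by combining the $R$-regularity of $N^{\infty}$ and $G$ (as Fréchet-- and Banach--Lie groups respectively, each $R$-regular with smooth evolution) with an explicit integration formula for the evolution of a regulated curve in the semidirect product. First I would fix a regulated curve $\xi = (\xi_N, \xi_G) \in R(I, \hh^\infty)$, where $\hh^\infty = \Lie(N^\infty) \times \Lie(G)$. Just as in the Banach case, the $G$-component of $\Evol_{H^\infty}(\xi)$ should be $g(t) := \Evol_G(\xi_G)(t)$, obtained by solving the equation purely in $G$; this is a $C^1$-curve in $G$ depending smoothly (in the $R$-topology) on $\xi_G$ by $R$-regularity of $G$. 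Then the $N^\infty$-component $n(t)$ must solve an equation of the form $n'(t) = n(t).\big(\pi_\infty(g(t))\xi_N(t)\big)$ in $N^\infty$, i.e. $n = \Evol_{N^\infty}(\pi_\infty(g(\cdot))\xi_N(\cdot))$. The key point is that since $\pi_\infty$ is a \emph{smooth} action (by the previous theorem) and $g$ is $C^1$, the curve $t \mapsto \pi_\infty(g(t))\xi_N(t)$ is again regulated (it is a composition of a regulated curve with a continuous, indeed smooth, map, with values in the Fréchet space $\Lie(N^\infty)$), so $\Evol_{N^\infty}$ applies.

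The next step is to verify that $(n(t), g(t))$ genuinely solves the IVP \eqref{eqn:IVP_intro} for $H^\infty$: this is a direct computation using the product rule for the (smooth) multiplication in the Fréchet--Lie group $H^\infty$ together with the explicit form of the left translation in a semidirect product, $\lambda_{(n_1,g_1)}(n_2,g_2) = (n_1 \pi(g_1)n_2, g_1 g_2)$, and its tangent map at the identity. By uniqueness of solutions (\cite[\S II.3]{NeJap}) this identifies $\Evol_{H^\infty}(\xi) = (n(\cdot), g(\cdot))$ with the formula above. Finally, smoothness of $\Evol_{H^\infty}$ follows by tracking smoothness through this composition: $\xi_G \mapsto g$ is smooth ($R$-regularity of $G$ with smooth evolution), hence $(\xi_N, \xi_G) \mapsto \pi_\infty(g(\cdot))\xi_N(\cdot) \in R(I, \Lie(N^\infty))$ is smooth (here one uses that the $G$-action on $N^\infty$ and the induced action on $\Lie(N^\infty)$ are smooth, plus that postcomposition with a smooth map and pointwise evaluation behave well on spaces of regulated curves), and then $\Evol_{N^\infty}$ is smooth by $R$-regularity of $N^\infty$ with smooth evolution. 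Composing smooth maps yields smoothness of $\Evol_{H^\infty}$ into $C^1(I, H^\infty)$. The last assertion, that $H^\infty$ has the strong Trotter and commutator properties, is then immediate: strong Trotter follows from $R$-regularity exactly as in \cite[Theorem~I]{Gl15} (cf.\ Theorem~\ref{thmintro:STP}), and the commutator property together with the (ordinary) Trotter property follows from the strong Trotter property for locally convex Lie groups by \cite[Theorem~H]{Gl15}, which applies since $H^\infty$ is a genuine Fréchet--Lie group.

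The main obstacle I anticipate is the smoothness (as opposed to mere continuity or well-definedness) of the map $\xi \mapsto \pi_\infty(g(\cdot))\,\xi_N(\cdot)$ from $R(I,\hh^\infty)$ to $R(I,\Lie(N^\infty))$, and, relatedly, making sure that all the relevant maps land in and act continuously between the correct spaces of regulated functions with their natural locally convex topologies. One must check that composing a regulated curve with a smooth map preserves regulatedness and depends smoothly on the data, and that the Fréchet structure on $\Lie(N^\infty)$ (a projective limit of Banach spaces, inherited from the smooth-vector construction of \S\ref{section:FLGSOH}) is compatible with these operations --- in particular that the derived action of $G$ on $\Lie(N^\infty)$ is smooth and that $g \mapsto (\text{action of } g \text{ on } \Lie(N^\infty))$ behaves well. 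This is where the earlier results on the Fréchet--Lie group structure of $N^\infty$ and the smoothness of $\pi_\infty$ do the essential work; once those are in hand, the semidirect-product bookkeeping and the uniqueness argument are routine, and the Trotter-type corollaries are formal consequences of general locally convex Lie theory.
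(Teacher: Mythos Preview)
Your approach is correct and would work, but it differs from the paper's route. The paper does not re-derive the semidirect-product evolution formula here; instead, after proving that $N^{\infty}$ is $R$-regular with smooth evolution (Theorem~\ref{thm:Rregular_Ninfty}), it simply invokes the general fact that $R$-regularity with smooth evolution is preserved under extensions of Fr\'echet--Lie groups (\cite[Theorem~G]{Gl15}). Since $G$ is Banach and hence $R$-regular with smooth evolution, and $N^{\infty}$ has just been shown to be so, the black-box extension result immediately gives the conclusion for $H^{\infty}$. The Trotter-type statements then follow from \cite[Theorem~I]{Gl15}, exactly as you say.

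Your explicit construction is essentially a re-proof of the extension theorem in the special case of a smooth semidirect product, and indeed parallels the argument the paper gave earlier for the half-Lie group $H$ in Theorem~\ref{thm:Rreg_Glinear}. The advantage of your route is that it is self-contained and makes the evolution map concrete; the cost is exactly the technical point you flag, namely establishing smoothness of $(\xi_N,\xi_G)\mapsto \dot\pi_{\infty}(\Evol_G(\xi_G)(\cdot))\,\xi_N(\cdot)$ as a map $R(I,\n^{\infty})\times R(I,\g)\to R(I,\n^{\infty})$. This requires a version of \cite[2.2]{Gl15} (cf.\ Lemma~\ref{lemma:technical_lemma2}) where the first argument is itself a curve rather than a fixed point, combined with the smoothness of $\Evol_G$ into $C(I,G)$. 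This is precisely the kind of bookkeeping that \cite[Theorem~G]{Gl15} packages once and for all, which is why the paper prefers to cite it.
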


\subsubsection*{Acknowledgement} 
The authors thank Helge Gl\"ockner for enlightening 
 discussions on the topic of measurable regularity properties. 
They also thank the referees for helpful remarks and for pointing out 
some references.

\section{Preliminaries}
\begin{notation}
Throughout this paper, $\NN=\{1,2,\dots\}$ denotes the set of positive integers, and $\NN_0:=\NN\cup\{0\}$ the set of nonnegative integers.
\end{notation}

We first recall the basic concepts pertaining to infinite-dimensional Lie groups modelled on locally convex spaces, and their measurable regularity properties.
The main references for this section are \cite{Gl15} and \cite{NeJap}.

\subsection{Lebesgue spaces \texorpdfstring{(\cite[1.7--1.13, 1.25, 1.31]{Gl15})}{}}
Let $I=[a,b]\subseteq\RR$ for some $a<b$, which we view as a measure space for the (restriction of) the Lebesgue measure on $\RR$. Let $E$ be a real Fr\'echet space, which we view as a measurable space with respect to its $\sigma$-algebra of Borel sets. We write $P(E)$ for the set of all continuous seminorms $q\co E\to [0,\infty[$. 

 We let $\LLL^1(I,E)$ denote the space of all measurable functions $\gamma\co I\to E$ with separable image (i.e. $\gamma(I)$ has a dense countable subset) such that
$$\norm{\gamma}_{\LLL^1,q}:=\int_{I}{q(\gamma(s))ds}<\infty \quad\textrm{for all $q\in P(E)$.}$$ Similarly, we let $\LLL^{\infty}(I,E)$ denote the space of all measurable functions $\gamma\co I\to E$ with separable and bounded image, so that
$$\norm{\gamma}_{\LLL^{\infty},q}:=\norm{q\circ\gamma}_{\LLL^{\infty}}=\mathrm{ess} \ \mathrm{sup}(q\circ\gamma)<\infty \quad\textrm{for all $q\in P(E)$.}$$ For $p\in\{1,\infty\}$, we equip $\LLL^p(I,E)$ with the (non-Hausdorff) locally convex vector topology defined by the seminorms $\norm{\cdot}_{\LLL^p,q}$ for $q\in P(E)$.

Let $\LLL^{\infty}_{rc}(I,E)$ be the space of all measurable maps $\gamma\co I\to E$ with relatively compact image. We endow $\LLL^{\infty}_{rc}(I,E)\subseteq \LLL^{\infty}(I,E)$ with the topology induced by $\LLL^{\infty}(I,E)$. Finally, let $\mathcal R(I,E)$ be the space of functions $\gamma\co I\to E$ that are the uniform limit of a sequence of step functions. We recall that $\gamma\co I\to E$ is a \emph{step function} if there exists a partition $a=t_0<t_1<\dots <t_n=b$ of $I$ such that $\gamma|_{]t_{j-1},t_j[}$ is constant for all $j\in\{1,\dots,n\}$. A function $\gamma\in \mathcal R(I,E)$ is called \emph{regulated}, and we endow $\mathcal R(I,E)\subseteq \LLL^{\infty}_{rc}(I,E)$ with the topology induced by $\LLL^{\infty}(I,E)$.

Given a measurable map $\gamma\co I\to E$, we write $[\gamma]$ for the equivalence class of measurable maps $\gamma_1\co I\to E$ such that $\gamma(s)=\gamma_1(s)$ for almost all $s\in I$. (When no confusion is possible, we will also simply write $\gamma$ for its equivalence class $[\gamma]$.) We then define $\LL^{1}(I,E)$ (resp. $\LL^{\infty}(I,E)$, $\LL^{\infty}_{rc}(I,E)$, $\RRR(I,E)$) as the space of equivalence classes $[\gamma]$ with $\gamma$ in $\LLL^{1}(I,E)$ (resp. $\LLL^{\infty}(I,E)$, $\LLL^{\infty}_{rc}(I,E)$, $\mathcal R(I,E)$). For $p\in \{1,\infty\}$, we equip $\LL^{p}(I,E)$ with the locally convex vector topology defined by the seminorms $\norm{[\gamma]}_{\LL^p,q}:=\norm{\gamma}_{\LLL^p,q}$ for $q\in P(E)$, and we give $\LL^{\infty}_{rc}(I,E)$ and $R(I,E)$ the induced topology, coming from the inclusions $\RRR(I,E)\subseteq \LL^{\infty}_{rc}(I,E)\subseteq \LL^{\infty}(I,E)$. Note that $R(I,E)$ admits a basis of open $0$-neighbourhoods consisting of the sets $$R(I,V):=\{\alpha\in R(I,E) \ | \ \alpha(I)\subseteq V\},$$ where $V$ runs through a basis of open $0$-neighbourhoods in $E$.

Finally, note that the map $C(I,E)\to \LL^{\infty}(I,E), \gamma\mapsto [\gamma]$ is injective; we will equip the space $C(I,E)$ of continuous functions with the induced topology, given in this case by the seminorms $\norm{\gamma}_{\LLL^{\infty},q}=\sup_{t\in I}{q(\gamma(t))}$ for $q\in P(E)$. 

\subsection{Integration \texorpdfstring{(\cite[1.16--1.28]{Gl15})}{}}\label{section:integration}
Let $E$ be a real locally convex space, $E'$ be its topological dual (that is, the space of all continuous linear functionals $E\to\RR$), and let $\gamma\co I=[a,b]\to E$ be a function such that $\lambda\circ\gamma\in \LLL^{1}(I,\RR)$ for each $\lambda\in E'$. We define the \emph{weak integral} of $\gamma$, if it exists, as the unique element $w\in E$ such that 
$$\lambda(w)=\int_{I}{\lambda(\gamma(t))dt}\quad\textrm{for all $\lambda\in E'$},$$
and we write $\int_a^b\gamma(t)dt=\int_I\gamma(t)dt:=w$. 

As usual, a map $\eta\co I\to E$ is called \emph{differentiable at $t\in I$} if the limit $\eta'(t):=\lim_{s\to t}{\frac{\eta(s)-\eta(t)}{s-t}}$ exists in $E$. We have the following version of the Fundamental Theorem of Calculus: 

\begin{lemma}\label{lemma:thm_fond_CDI}
Let $E$ be a Fr\'echet space, $I=[a,b]\subseteq\RR$ and $\gamma\in\LLL^1(I,E)$. Then the weak integrals needed to define $$\eta\co I\to E, \ s\mapsto \int_{a}^s{\gamma(t)dt}$$ exist, and $\eta$ is a continuous function which is differentiable almost everywhere, with $\eta'=[\gamma]$.
\end{lemma}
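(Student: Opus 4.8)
The plan is to reduce everything to the classical Bochner integration theory in Banach spaces by exploiting the fact that a Fr\'echet space is the projective limit of its local Banach spaces. Fix an increasing sequence $(q_n)_{n\in\NN}$ of continuous seminorms defining the topology of $E$, and for each $n$ let $E_n$ denote the Banach space obtained by completing $E/q_n^{-1}(0)$ with respect to the norm induced by $q_n$, with canonical continuous linear map $\phi_n\co E\to E_n$ and connecting maps $\phi_{nm}\co E_m\to E_n$ for $m\ge n$ satisfying $\phi_n=\phi_{nm}\circ\phi_m$. Since $E$ is complete and metrizable, the natural map $E\to\varprojlim E_n$ is a topological isomorphism; in particular a sequence converges in $E$ if and only if its image under each $\phi_n$ converges in $E_n$.

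First I would establish existence of the weak integrals. For each $n$, the composite $\gamma_n:=\phi_n\circ\gamma\co I\to E_n$ has separable image and satisfies $\int_I\norm{\gamma_n(t)}_{E_n}\,dt=\int_I q_n(\gamma(t))\,dt<\infty$, so $\gamma_n$ is Bochner integrable and $w_n:=\int_a^s\gamma_n(t)\,dt\in E_n$ exists. Because the Bochner integral commutes with the continuous linear maps $\phi_{nm}$, one has $\phi_{nm}(w_m)=w_n$, so that $(w_n)_n$ defines an element $w\in\varprojlim E_n=E$. That $w$ is the weak integral follows by factoring each $\lambda\in E'$ as $\lambda=\tilde\lambda\circ\phi_n$ for suitable $n$ and $\tilde\lambda\in E_n'$ (possible since $\lambda$ is dominated by some $q_n$) and using that the scalar-valued Bochner integral commutes with $\tilde\lambda$. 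This defines $\eta\co I\to E$ with $\phi_n\circ\eta=\eta_n$, where $\eta_n(s):=\int_a^s\gamma_n(t)\,dt$.

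Continuity of $\eta$ is then immediate seminorm by seminorm: for $s'<s$ one estimates $q_n(\eta(s)-\eta(s'))=\norm{\eta_n(s)-\eta_n(s')}_{E_n}\le\int_{s'}^s\norm{\gamma_n(t)}_{E_n}\,dt$, and the right-hand side tends to $0$ as $s'\to s$ because $t\mapsto\int_a^t\norm{\gamma_n(r)}_{E_n}\,dr$ is (absolutely) continuous. As this holds for every $n$, $\eta$ is continuous.

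It remains to prove a.e.\ differentiability with $\eta'=[\gamma]$, which I expect to be the crux. For each fixed $n$, the Banach-space version of the Lebesgue differentiation theorem for Bochner integrals yields a co-null set $L_n\subseteq I$ (the set of Lebesgue points of $\gamma_n$) such that $\eta_n'(s)=\gamma_n(s)$ for all $s\in L_n$. The difficulty is that a priori this null set depends on $n$; here the countability of the seminorm family rescues us, for $L:=\bigcap_n L_n$ is still co-null. The key point is then the reassembly: for $s\in L$ and any sequence $h_k\to 0$, the difference quotient $x_k:=h_k^{-1}(\eta(s+h_k)-\eta(s))\in E$ satisfies $\phi_n(x_k)=h_k^{-1}(\eta_n(s+h_k)-\eta_n(s))\to\gamma_n(s)=\phi_n(\gamma(s))$ in $E_n$ for every $n$. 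Since convergence in $E=\varprojlim E_n$ is exactly coordinatewise convergence, it follows that $x_k\to\gamma(s)$ in $E$; as the sequence $(h_k)$ was arbitrary, $\eta'(s)=\gamma(s)$. Thus $\eta$ is differentiable on the co-null set $L$ with $\eta'=[\gamma]$, completing the proof. The only genuinely delicate ingredient is this transfer of almost-everywhere differentiability from each local Banach space to $E$ itself, for which the metrizability of $E$ (allowing the use of sequences and a countable defining family of seminorms) is essential.
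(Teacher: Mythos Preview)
The paper does not prove this lemma; it is stated in the preliminaries section with an explicit citation to \cite[1.16--1.28]{Gl15}, so there is no ``paper's own proof'' to compare against.

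Your argument is correct and is a natural way to supply the details. The reduction to the local Banach spaces $E_n$ is standard for Fr\'echet spaces, and each of the three steps goes through as you say: existence of the weak integral follows from compatibility of the Bochner integrals under the connecting maps $\phi_{nm}$ together with the identification $E\cong\varprojlim E_n$; continuity is the seminorm-by-seminorm estimate you wrote; and the differentiability step is exactly the right use of the Banach-space Lebesgue differentiation theorem for Bochner integrals, with the countability of the family $(q_n)_n$ ensuring that the intersection of the exceptional null sets remains null. The only place worth a sentence of care is the Bochner integrability of $\gamma_n=\phi_n\circ\gamma$: you need strong measurability, which follows from Pettis's theorem since $\gamma_n$ is Borel measurable (as the composite of a Borel map with a continuous linear map) and has separable range (as the continuous image of the separable set $\gamma(I)$). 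With that remark your proof is complete.
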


\subsection{Differentiation \texorpdfstring{(\cite[1.49--1.52]{Gl15})}{}}\label{section:Diff}
Let $E$ and $F$ be real locally convex spaces, $U\subseteq E$ be an open set and $f\co U\to F$ be a map. 
The \emph{derivative of $f$ at $x\in U$ in the direction $y\in E$} is defined as the limit
$$df(x,y):=(D_yf)(x):=\frac{d}{dt}\Big|_{t=0}f(x+ty)=\lim_{t\to 0}\frac{1}{t}(f(x+ty)-f(x)),$$
whenever it exists. We say that $f$ is $C^0$ if it is continuous. We say that $f$ is $C^1$ if $f$ is continuous, the derivatives $df(x,y)$ exist in $F$ for all $(x,y)\in U\times E$, and $df\co U\times E\to F$ is continuous. Recursively, we say, for some integer $k\geq 1$, that $f$ is $C^k$ if $f$ is $C^1$ and $df\co U\times E\to F$ is $C^{k-1}$. Equivalently, $f$ is $C^k$ if and only if it is continuous and, for all positive integers $j\leq k$, the iterated directional derivatives 
$$d^jf(x,y_1,\dots,y_j):=(D_{y_j}\dots D_{y_1}f)(x)$$
exist for all $x\in U$ and $y_1,\dots,y_j\in E$, and the map $d^jf\co U\times E^j\to F$ is continuous. We call $f$ \emph{smooth} or $C^{\infty}$ if it is $C^{k}$ for all $k\in\NN$.

We record for future reference the following results.

\begin{lemma}[{\cite[2.1]{Gl15}}]\label{lemma:RtoR}
Let $E_1,E_2,F$ be Fr\'echet spaces, and let $f\co E_1\times E_2\to F$ be a continuous map such that $f(x,\cdot)\co E_2\to F$ is linear for all $x\in E_1$. Let $\eta\co I=[0,1]\to E_1$ be a continuous function and $\gamma\in\mathcal R(I,E_2)$. Then $f\circ(\eta,\gamma)\in\mathcal R(I,F)$.
\end{lemma}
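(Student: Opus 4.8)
The plan is to exhibit $f\circ(\eta,\gamma)$ directly as a uniform limit of step functions by approximating \emph{both} $\eta$ and $\gamma$ and controlling the resulting error through two separate continuity estimates for $f$, one in each variable. First I record the relevant facts about images: since $I$ is compact and $\eta$ is continuous, $\eta(I)$ is compact; and since $\gamma\in\mathcal R(I,E_2)\subseteq\LL^\infty_{rc}(I,E_2)$, the image $\gamma(I)$ is relatively compact. As $\eta$ is continuous on a compact interval it is regulated, so I may choose step functions $\eta_m\to\eta$ and $\gamma_n\to\gamma$ uniformly, i.e.\ $\norm{\eta_m-\eta}_{\LLL^\infty,q}\to 0$ for all $q\in P(E_1)$ and $\norm{\gamma_n-\gamma}_{\LLL^\infty,q}\to 0$ for all $q\in P(E_2)$. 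A short total-boundedness argument then shows that $K_1:=\overline{\eta(I)\cup\bigcup_m\eta_m(I)}$ and $K_2:=\overline{\gamma(I)\cup\bigcup_n\gamma_n(I)}$ are compact: the closures of $\eta(I)$ and $\gamma(I)$ are totally bounded, and uniform convergence forces all but finitely many of the finite sets $\eta_m(I)$, $\gamma_n(I)$ into arbitrarily small seminorm-neighbourhoods of them, so completeness of the Fr\'echet spaces yields compactness of $K_1$ and $K_2$.

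Next I set $h_{m,n}:=f\circ(\eta_m,\gamma_n)$. Passing to a common refinement of the two partitions underlying $\eta_m$ and $\gamma_n$, both functions are constant on each open subinterval, hence so is $h_{m,n}$; thus every $h_{m,n}$ is a step function. It remains to show that $h_{m,n}$ approximates $f\circ(\eta,\gamma)$ uniformly on $I$ in every $p\in P(F)$. I split the error, using linearity of $f$ in the second slot, as
\[
f(\eta_m(t),\gamma_n(t))-f(\eta(t),\gamma(t))=\big(f(\eta_m(t),\gamma_n(t))-f(\eta(t),\gamma_n(t))\big)+f\big(\eta(t),\gamma_n(t)-\gamma(t)\big).
\]
For the second term I invoke the standard estimate that, because $f$ is linear in the second variable and $K_1$ is compact, for each $p\in P(F)$ there are $q\in P(E_2)$ and $C>0$ with $p(f(x,y))\le C\,q(y)$ for all $x\in K_1$, $y\in E_2$ (proved by covering $K_1$ with finitely many neighbourhoods on which continuity of $f$ at the points $(x,0)$ applies, and rescaling in $y$). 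Hence $p(f(\eta(t),\gamma_n(t)-\gamma(t)))\le C\,\norm{\gamma_n-\gamma}_{\LLL^\infty,q}\to 0$ uniformly in $t$. For the first term I use that $f$ is uniformly continuous on the compact set $K_1\times K_2$: given $p$ and $\varepsilon>0$ there are $q_1\in P(E_1)$ and $\delta>0$ with $p(f(x,y)-f(x',y))<\varepsilon$ whenever $x,x'\in K_1$, $y\in K_2$ and $q_1(x-x')<\delta$; choosing $m$ with $\norm{\eta_m-\eta}_{\LLL^\infty,q_1}<\delta$ bounds the first term by $\varepsilon$ uniformly in $t$ \emph{and} in $n$, since $\gamma_n(t)\in K_2$ always.

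Combining the two estimates, for each $p$ and $\varepsilon$ there exist $m,n$ with $\norm{h_{m,n}-f\circ(\eta,\gamma)}_{\LLL^\infty,p}<2\varepsilon$; as $F$ is Fr\'echet, a diagonal choice over a countable fundamental family of seminorms produces a single sequence of step functions converging uniformly to $f\circ(\eta,\gamma)$, which is therefore regulated. The one genuinely non-formal point, and the step I expect to require the most care, is the compactness of $K_1$ and $K_2$: one needs a \emph{single} compact set containing the values of all approximants so that uniform continuity of $f$ becomes available, and this must be established seminorm-by-seminorm via total boundedness. Everything else is a routine $\varepsilon$--$\delta$ splitting, exploiting that $f$ is globally controlled by one seminorm in the linear variable while only uniform continuity on a compact set is needed in the other.
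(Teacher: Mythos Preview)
Your argument is correct. Note, however, that the paper does not actually prove this lemma: it is quoted verbatim from \cite[2.1]{Gl15} and no proof is given, so there is no ``paper's own proof'' to compare against.

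That said, your route is slightly more elaborate than necessary. You approximate \emph{both} $\eta$ and $\gamma$ by step functions, which forces you to control the first-variable error via uniform continuity of $f$ on $K_1\times K_2$ and hence to establish compactness of $K_2$. A shorter path is to leave $\eta$ untouched: for a step function $\gamma_n$ the composite $f\circ(\eta,\gamma_n)$ is piecewise continuous (on each subinterval it equals $t\mapsto f(\eta(t),c)$), hence already regulated; then the single estimate $p\big(f(\eta(t),\gamma(t)-\gamma_n(t))\big)\le C\,q(\gamma_n(t)-\gamma(t))$, which you already derived from compactness of $\eta(I)$ and linearity in the second slot, shows $f\circ(\eta,\gamma_n)\to f\circ(\eta,\gamma)$ uniformly. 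This avoids the sets $K_1,K_2$, the uniform-continuity step, and the diagonal extraction altogether. Your approach has the minor advantage of producing genuine step functions (rather than regulated ones) converging to the target, but since $\mathcal R(I,F)$ is closed under uniform limits this is not needed.
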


\begin{lemma}\label{lemma:technical_lemma2} 
Let $E_1,E_2,F$ be Fr\'echet spaces and let $V\subseteq E_1$ be open. Let $f\co V\times E_2\to F$ be a smooth map such that $f(v,\cdot)\co E_2\to F$ is linear for all $v\in V$. Then 
$$\widetilde{f}\co V\times R(I,E_2)\to R(I,F), \ (x,\gamma)\mapsto f(x,\gamma(\cdot))$$ is smooth.
\end{lemma}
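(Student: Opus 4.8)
The plan is to prove, by induction on $k\in\NN_0$, the following uniform statement $P(k)$: \emph{for all Fr\'echet spaces $E_1,E_2,F$, every open $V\subseteq E_1$ and every smooth $f\co V\times E_2\to F$ with $f(v,\cdot)$ linear, the induced map $\widetilde f\co V\times R(I,E_2)\to R(I,F)$ is $C^k$.} That $\widetilde f$ is well defined (i.e.\ lands in $R(I,F)$) is immediate from Lemma~\ref{lemma:RtoR}, applied to the continuous map $f$ and the constant curve $\eta\equiv x$. The engine of the induction is a single algebraic observation: writing the variable of $f$ as $(v,w)$ and a tangent vector as $(y,z)$, linearity in $w$ gives $f(v+sy,w+sz)=f(v+sy,w)+s\,f(v+sy,z)$, and differentiating at $s=0$ (product rule on the second summand) yields
\[
df\big((v,w),(y,z)\big)=d_1f\big((v,w),y\big)+f(v,z),\qquad d_1f((v,w),y):=df\big((v,w),(y,0)\big).
\]
Since $d_1f((v,w),y)$ is a limit of maps linear in $w$, it is linear in $w$; hence $g\co(V\times E_1)\times E_2\to F$, $g\big((v,y),w\big):=d_1f((v,w),y)$, is smooth and linear in its last argument, that is, of \emph{exactly the same type as $f$}, with base space $E_1\times E_1$. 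Passing to induced maps, this gives the derivative formula
\[
d\widetilde f\big((x,\gamma),(y,\beta)\big)=\widetilde g\big((x,y),\gamma\big)+\widetilde f(x,\beta),
\]
exhibiting $d\widetilde f$ as a sum of the induced maps $\widetilde g$ and $\widetilde f$ precomposed with the continuous linear (hence smooth) reparametrisations $((x,\gamma),(y,\beta))\mapsto((x,y),\gamma)$ and $((x,\gamma),(y,\beta))\mapsto(x,\beta)$.

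Granting this formula, the induction is formal. Assume $P(k-1)$ holds for all induced maps of the given type. Then $\widetilde g$ and $\widetilde f$ are $C^{k-1}$, so by the chain rule (composition with smooth reparametrisations) and additivity the right-hand side of the displayed formula is $C^{k-1}$; combined with the continuity of $\widetilde f$ from the base case $P(0)$ and the fact that the directional derivatives exist and are given by that formula, the recursive definition in \S\ref{section:Diff} yields precisely that $\widetilde f$ is $C^k$. Thus everything reduces to two analytic inputs: the base case $P(0)$ (continuity of $\widetilde f$), and the verification that the directional derivative exists \emph{in $R(I,F)$} with the claimed value.

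For continuity at $(x_0,\gamma_0)$, I split, using linearity,
\[
f(x,\gamma(t))-f(x_0,\gamma_0(t))=f\big(x,\gamma(t)-\gamma_0(t)\big)+\big(f(x,\gamma_0(t))-f(x_0,\gamma_0(t))\big),
\]
and control the two terms uniformly in $t$ by: (1) a local estimate $q(f(x,w))\le C\,p(w)$ valid for $x$ in a neighbourhood of $x_0$ (a standard consequence of joint continuity together with linearity in $w$), which bounds the first term by $C\norm{\gamma-\gamma_0}_{\LL^\infty,p}$; and (2) the uniform continuity of $x\mapsto f(x,w)$ over $w$ in the \emph{fixed compact} set $\overline{\gamma_0(I)}$ (a tube/finite-subcover argument from joint continuity), which sends the second term to $0$ as $x\to x_0$. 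For the derivative, I apply the fundamental theorem of calculus along the segment to get
\[
\tfrac1s\big(f(x+sy,\gamma(t)+s\beta(t))-f(x,\gamma(t))\big)=\tfrac1s\int_0^s df\big((x+\sigma y,\gamma(t)+\sigma\beta(t)),(y,\beta(t))\big)\,d\sigma,
\]
and compare the integrand with $df((x,\gamma(t)),(y,\beta(t)))$. Here the crucial point is that, as $\sigma$ ranges over $[0,s]$ and $t$ over $I$, all evaluation points lie in one fixed compact subset of $(V\times E_2)\times(E_1\times E_2)$, because $\gamma(I)$ and $\beta(I)$ are relatively compact for regulated $\gamma,\beta$; uniform continuity of $df$ on that compact set forces the difference to $0$ uniformly in $t$ as $s\to0$, i.e.\ convergence in the $\LL^\infty$-seminorms of $R(I,F)$.

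I expect the main obstacle to be exactly this upgrade from pointwise to \emph{uniform-in-$t$} control, since the topology on $R(I,F)$ is the $\LL^\infty$-type topology and $F$ is only Fr\'echet, so no single norm is available. The structural fact that makes it go through—and the reason $R(I,\cdot)$ is the right functor here—is that regulated functions have relatively compact image, so $f$ and $df$ need only be controlled on compact sets, where their continuity is automatically uniform; the local bound $q(f(x,w))\le C\,p(w)$ then handles the genuinely non-compact direction coming from varying $\gamma$.
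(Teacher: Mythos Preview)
Your argument is correct. The induction on $k$ via the derivative decomposition $d\widetilde f\big((x,\gamma),(y,\beta)\big)=\widetilde g\big((x,y),\gamma\big)+\widetilde f(x,\beta)$ is sound, and the two analytic inputs---continuity of $\widetilde f$ via the local bound $q(f(x,w))\le C\,p(w)$ together with a tube-lemma argument over the compact $\overline{\gamma_0(I)}$, and existence of the directional derivative in $R(I,F)$ via the mean value integral and uniform continuity of $df$ on the compact set carrying $\{(x+\sigma y,\gamma(t)+\sigma\beta(t),y,\beta(t)):\sigma\in[0,s],\,t\in I\}$---are exactly the right ones, with the relative compactness of regulated images doing the essential work.

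The paper, however, takes a different and much shorter route: it simply invokes \cite[Lemma~2.2]{Gl15}, which establishes smoothness of the more general map $C(I,V)\times R(I,E_2)\to R(I,F)$, $(\eta,\gamma)\mapsto f(\eta(\cdot),\gamma(\cdot))$, and then precomposes with the smooth inclusion $V\hookrightarrow C(I,V)$ sending $x$ to the constant curve. Your approach is more elementary and self-contained, and it makes transparent why the regulated-function setting is the natural one (compactness of images replacing the missing Banach norm); the paper's approach is terser but defers all the analysis to Gl\"ockner's reference, whose proof is in any case close in spirit to what you have written.
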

\begin{proof}
This follows from \cite[2.2]{Gl15}, since the natural injection $V\hookrightarrow C(I,V)$ is smooth.
\end{proof}

\subsection{Manifolds \texorpdfstring{(\cite[1.53]{Gl15}, \cite[Chapter I]{NeJap})}{}}
Since composition of smooth maps are smooth, one can define a \emph{smooth manifold $M$ modelled on a real locally convex space $E$} (or just \emph{$E$-manifold}) by replacing the modelling space $\RR^n$ by $E$ in the classical definitions of manifolds (see \cite{NeJap}). If $E$ is a Banach (resp. Fr\'echet) space, then $M$ is called a \emph{Banach} (resp. \emph{Fr\'echet}) \emph{manifold}. 

As usual, $TM=\dot{{\bigcup}}_{x\in M}{T_xM}$ then denotes the tangent bundle of $M$ and $T_xM\cong E$ the tangent space of $M$ at $x\in M$. Likewise, for a smooth map $f\co M\to  N$ between smooth manifolds, $Tf\co TM\to TN$ (resp. $T_xf\co T_xM\to T_{f(x)}N$) is the corresponding tangent map (resp. tangent map at $x\in M$). If $U$ is an open subset of a locally convex space $E$, we identify $TU$ with $U\times E$. For a smooth map $f\co M\to E$ from a smooth manifold $M$ to a locally convex space $E$, we then write $df\co TM\to E$ for the second component of $Tf\co TM\to E\times E$. Note that \S\ref{section:Diff} also yields a notion of \emph{$C^k$-maps} between smooth manifolds for each $k\in\NN\cup\{\infty\}$.

\subsection{Lie groups \texorpdfstring{(\cite[Chapters II--IV]{NeJap})}{}}\label{section:LG_NeJap}
A \emph{(locally convex) Lie group} $G$ is a group with a smooth manifold structure modelled on a locally convex space, for which the group operations (multiplication and inversion) are smooth. We write $1_G$ (or simply $1$ if no confusion is possible) for the identity element of $G$, and $\lambda_g\co G\to G, x\mapsto gx$ and $\rho_g\co G\to G,x\mapsto xg$ for the left and right multiplication maps. For $g\in G$, we also write $\Int(g)\co G\to G,x\mapsto gxg\inv$ for the conjugation map.

For each $x\in T_{1}G$, there is a unique left invariant vector field $x_l\co G\to TG$ with $x_l(1)=x$, defined by $x_l(g):=T_1(\lambda_g)x$. The Lie bracket on the space of left invariant vector fields then induces a continuous Lie bracket on $\g:=T_1(G)$, characterised by $[x,y]_l=[x_l,y_l]$ for $x,y\in\g$. We let $\Lie$ denote the functor from the category of locally convex Lie groups to the category of locally convex topological Lie algebras, which associates to a group $G$ its Lie algebra $\Lie(G):=(\g,[\cdot,\cdot])$ and to a Lie group morphism $\varphi\co G_1\to G_2$ the corresponding tangent map at the identity $\Lie(\varphi)=T_1(\varphi)\co\Lie(G_1)\to\Lie(G_2)$. If $\g$ is a Banach (resp. Fr\'echet) space, then $G$ is called a \emph{Banach} (resp. \emph{Fr\'echet}) \emph{Lie group}.

The left multiplication and conjugation maps on $G$ induce smooth maps
$$G\times\g\to TG, \ (g,x)\mapsto g.x:=T_1(\lambda_g)x\quad\textrm{and}\quad \Ad\co G\times\g\to\g, \ (g,x)\mapsto \Ad(g)x:=\Lie(\Int(g))x.$$
 Note that the adjoint action $\Ad$ of $G$ on $\g$ is by topological isomorphisms.

A map $\exp_G\co \g\to G$ is called an \emph{exponential function} if for any $x\in\g$, the curve $\gamma_x(t):=\exp_G(tx)$ is a $C^1$ one-parameter subgroup with $\gamma_x'(0):=T_0(\gamma_x)(1)=x$. If $G$ has an exponential function, then it is unique. 

Assume now that $G$ is a Banach Lie group. Then $G$ has a smooth exponential function $\exp_G\co \g\to G$, and $\exp_G$ maps some open (convex) $0$-neighbourhood $V_G$ in $\g$ diffeomorphically onto some open subset $U_G$ of $G$. If $W_G\subseteq V_G$ is an open (convex) $0$-neighbourhood in $\g$ such that $$\{\exp_G(x)\exp_G(y) \ | \ x,y\in W_G\}\subseteq V_G,$$ then one can define on $W_G$ the \emph{local multiplication}
$$*\co W_G\times W_G\to \g, \ (x,y)\mapsto x*y:=\exp_G\inv(\exp_G(x)\exp_G(y)),$$
which is a smooth map satisfying $tx*sx=(t+s)x$ for all $x\in W_G$ and $s,t\in\RR$ with $|s|,|t|,|s+t|\leq 1$ (\cite[Example~IV.2.4]{NeJap}).

Given some $k\in\NN\cup\{\infty\}$ with $k\geq 1$, some locally convex spaces $E,F$, some open subset $U\subseteq E$ and some $C^k$-map $f\co U_G\times U\to F$, we will use the exponential chart $(U_G,\exp_G\inv)$ of $G$ to define the \emph{$k$-th derivative $d_1^kf\co U_G\times\g^k\times U\to F$ of $f$ in the first coordinate} by setting
$$d_1^1f(g,x,v):=d_1f(g,x,v):=d(f^v)(g,x)=\frac{d}{dt}\Big|_{t=0}f(g\exp_G(tx),v)\quad\textrm{for all $g\in U_G$, $x\in\g$ and $v\in U$}$$
where $f^v\co U_G\to F,g\mapsto f(g,v)$ and, recursively, $d_1^{s+1}f=d_1(d_1^{s}f)$ for all $s\in\{1,\dots,k-1\}$. In other words, for all $g\in U_G$, $x_1,\dots,x_k\in\g$ and $v\in U$,
\begin{equation}\label{eqn:k-th_diff}
d_1^kf(g,x_1,\dots,x_k,v)=\frac{d}{dt_1}\Big|_{t_1=0}\dots\frac{d}{dt_k}\Big|_{t_k=0}f(g\exp_G(t_1x_1)\dots \exp_G(t_kx_k),v).
\end{equation}
Note that $d_1^sf$ is a $C^{k-s}$-map for all $s\in\{1,\dots,k\}$. For a $C^k$-map $h\co U_G\to F$, we also define $d^kh:=d_1^kh\co U_G\times\g^k\to F$ as above by viewing $h$ as a map $h\co U_G\times\{0\}\to F$.

\subsection{Half-Lie groups}\label{section:subSTG}
In this paper, we will consider the following generalisation of a Lie group (see also Section~\ref{section:STG} below). Let $E$ be a locally convex space, and let $G$ be a smooth manifold modelled on $E$. We call $G$ a \emph{(left) half-Lie group modelled on $E$} if $G$ admits a topological group structure (with respect to the manifold topology) such that all left multiplication maps $\lambda_g\co G\to G$ are smooth. 

\subsection{Local Lie groups \texorpdfstring{(\cite[II.1.10]{NeJap})}{}}
Given a group $G$ with multiplication $m\co G\times G\to G$ and identity $1_G$, a quadruple $(U,D_U,m,1_G)$ consisting of a symmetric subset $U=U\inv\subseteq G$ with $1_G\in U$ and of a subset $D_U\subseteq U\times U$ with $((U\times\{1_G\})\cup (\{1_G\}\times U)\subseteq D_U$ and $m(D_U)\subseteq U$ is a so-called \emph{local group}. 
If $(U,D_U,m,1_G)$ is such 
a local group and, in addition, $U$ has a smooth manifold structure, $D_U$ is open, and the local multiplication and inversion maps $D_U\to U,(x,y)\mapsto m(x,y)$ and $U\to U,x\mapsto x\inv$ are smooth, then $(U,D_U,m,1_G)$ (or simply $U$) is called a \emph{local Lie group}. 

\subsection{Absolutely continuous maps \texorpdfstring{(\cite[3.6--3.20, 4.2]{Gl15})}{}}\label{section:ACM}
Let $I=[a,b]\subseteq\RR$ for some $a<b$ and let $E$ be a Fr\'echet space. Define $AC_R(I,E)\subseteq C(I,E)$ as the space of continuous functions $\eta\co I\to E$ for which there exists $[\gamma]\in R(I,E)$ such that $$\eta(s)=\eta(a)+\int_{a}^s{\gamma(t)dt}\quad \textrm{for all $s\in I$.}$$ Then $[\gamma]=\eta'$ is unique by Lemma~\ref{lemma:thm_fond_CDI}, and the map $$AC_R(I,E)\to E\times R(I,E), \ \eta\mapsto (\eta(a),\eta')$$ is an isomorphism, which we use to define on $AC_R(I,E)$ a locally convex vector topology. The inclusion map $AC_R(I,E)\hookrightarrow C(I,E)$ is then continuous, and for any open subset $V\subseteq E$, the set 
$$AC_R(I,V):=\{\eta\in AC_R(I,E) \ | \ \eta(I)\subseteq V\}$$ is open in $AC_R(I,E)$. 

More generally, given a smooth manifold $M$ modelled on $E$, one can define $AC_R(I,M)$ as the set of all continuous functions $\eta\co I\to M$ for which there is a partition $a=t_0<t_1<\dots <t_n=b$ of $I$ such that, for each $j\in\{1,\dots,n\}$, there exists a chart $\varphi_j\co U_j\to V_j\subseteq E$ of $M$ with $\eta([t_{j-1},t_j])\subseteq U_j$ and $\varphi_j\circ\gamma|_{[t_{j-1},t_j]}\in AC_R([t_{j-1},t_j],E)$.

If $G$ is a Fr\'echet--Lie group, then $AC_R(I,G)$ is a group under pointwise multiplication, and there is a unique Lie group structure on $AC_R(I,G)$ such that
$$AC_R(I,U):=\{\eta\in AC_R(I,G) \ | \ \eta(I)\subseteq U\}$$
is open in $AC_R(I,G)$ and 
$$AC_R(I,\varphi)\co AC_R(I,U)\to AC_R(I,V), \ \eta\mapsto\varphi\circ\eta$$
is a diffeomorphism for each chart $\varphi\co U\to V$ of $G$ such that $1_G\in U$ and $U=U\inv$.

\subsection{Logarithmic derivative \texorpdfstring{(\cite[5.1--5.11]{Gl15})}{}}\label{section:LD}
Let $E$ be a Fr\'echet space. Let $G$ be either a half-Lie group or a local Lie group modelled on $E$. Write $\g:=T_{1}(G)$. Let $I=[a,b]\subseteq\RR$ for some $a<b$ and let $\eta\in AC_R(I,G)$. We define the \emph{(left) logarithmic derivative} $\delta(\eta)\in R(I,\g)$ of $\eta$ as follows.

Let $a=t_0<t_1<\dots <t_n=b$ be a partition of $I$ such that, for each $j\in\{1,\dots,n\}$, there exists a chart $\varphi_j\co U_j\to V_j\subseteq E$ of $G$ with $\eta([t_{j-1},t_j])\subseteq U_j$. Then $$\eta_j:=\varphi_j\circ\eta|_{[t_{j-1},t_j]}\in AC_R([t_{j-1},t_j],E)$$ for all $j\in\{1,\dots,n\}$, and one can thus consider $\eta_j'\in R(I,E)$, which we write as $\eta_j'=[\gamma_j]$ for some $\gamma_j\in\mathcal R(I,E)$. Define
$$\gamma\co I\to TG$$ via $\gamma(t):=T(\varphi_j)\inv(\eta_j(t),\gamma_j(t))$ if $t\in [t_{j-1},t_j[$ with $j\in\{1,\dots,n\}$ and $\gamma(b):=T(\varphi_n)\inv(\eta_n(b),\gamma_n(b))$. We then set
$$\delta(\eta):=[\omega_{\ell}\circ\gamma]\in R(I,\g),$$
where
$$\omega_{\ell}\co TG\to\g, \ v\in T_gG\mapsto g\inv.v:=T_g(\lambda_{g\inv})v\in T_1G.$$
Note that if $G$ and $H$ are smooth Fr\'echet--Lie groups and $f\co G\to H$ is a smooth homomorphism, then $f\circ\eta\in AC_R(I,H)$ for each $\eta\in AC_R(I,G)$ and
\begin{equation}\label{eqn:naturality_delta}
\delta(f\circ\eta)=\Lie(f)\circ\delta(\eta),
\end{equation} 
where $\Lie(f)\circ\delta(\eta):=[\Lie(f)\circ\gamma]$ if $\delta(\eta)=[\gamma]$ (see \cite[5.2(b)]{Gl15}).

\subsection{\texorpdfstring{$R$-regularity (\cite[5.14--5.26]{Gl15})}{R-regularity}}\label{section:RReg}
Let $I=[a,b]\subseteq\RR$ for some $a<b$ and let $E$ be a Fr\'echet space. Let also $V\subseteq E$ be open, and let $g\co I\times V\to E$ be a map. A continuous function $\eta\co I\to E$ is called an \emph{$AC_R$-Carath\'eodory solution} to $y'=g(t,y)$ if $\eta(I)\subseteq V$, the map $t\mapsto g(t,\eta(t))$ is in $R(I,E)$, and
$$\eta(t_2)-\eta(t_1)=\int_{t_1}^{t_2}{g(s,\eta(s))ds} \quad\textrm{for all $t_1,t_2\in I$.}$$

Consider next a smooth manifold $M$ modelled on $E$, and let $(t_0,y_0)\in I\times M$ and $f\co I\times M\to TM$ be a map. Then $\eta\in AC_R(I,M)$ is an \emph{$AC_R$-Carath\'eodory solution} to the initial value problem (IVP)
\begin{equation*}
y'=f(t,y), \quad y(t_0)=y_0
\end{equation*} 
if $\eta(t_0)=y_0$ and for each $t\in I$, there exists $\varepsilon>0$ such that $\eta(I\cap ]t-\varepsilon,t+\varepsilon[)\subseteq U$ for some chart $\varphi\co U\to V\subseteq E$ of $M$ and $\varphi\circ\eta|_{I\cap ]t-\varepsilon,t+\varepsilon[}$ is an $AC_R$-Carath\'eodory solution to $y'=g(t,y)$ with $$g\co\RR\times V\to E, \ (t,y)\mapsto d\varphi(f(t,\varphi\inv(y))).$$

Assume now that $I=[0,1]\subseteq\RR$. If $G$ is a half-Lie group modelled on $E$, with Lie algebra $\g:=T_{1}G$, then $G$ is called \emph{$R$-semiregular} if for each $\gamma\in R(I,\g)$, there exists $\eta\in AC_R(I,G)$ such that
\begin{equation}\label{eqn:IVP_delta}
\delta(\eta)=\gamma\quad\textrm{and}\quad \eta(0)=1_G.
\end{equation}
If it exists, then $\Evol_G(\gamma):=\eta$ is uniquely determined. Note that, writing $\gamma=[\zeta]$ for some $\zeta\in\mathcal R(I,\g)$, the map $\eta\in AC_R(I,G)$ satisfies (\ref{eqn:IVP_delta}) if and only if it is an $AC_R$-Carath\'eodory solution to the IVP 
$$y'=f(t,y),\quad y(0)=1_G$$
with $f\co I\times G\to TG,(t,y)\mapsto y.\zeta(t):=T(\lambda_y)\zeta(t)$. We moreover say that $G$ is \emph{$R$-regular}\footnote{Note that our notion of $R$-regularity is weaker than the one in \cite[5.16]{Gl15}: \emph{$R$-regular} in \emph{loc. cit.} means \emph{$R$-regular with a smooth evolution map} in this paper.} if it is $R$-semiregular and the \emph{evolution map}
$$\Evol_G\co R(I,\g)\to AC_R(I,G)$$
is continuous. Note that if $G$ is $R$-semiregular, then it has an exponential function $\exp_G\co\g\to G$ given by
\begin{equation}\label{eqn:exp_function}
\exp_G(v):=\evol_G(c_v)\quad\textrm{for all $v\in\g$,}
\end{equation}
where $c_v\co I\to\g,t\mapsto v$ is the constant function and 
$$\evol_G\co R(I,\g)\to G, \ \gamma\mapsto \Evol_G(\gamma)(1).$$

If $G$ is a local Lie group modelled on $E$, with Lie algebra $\g:=T_1G$, we call $G$ \emph{locally $R$-semiregular} if there exists an open $0$-neighbourhood $\Omega\subseteq R(I,\g)$ such that for each $\gamma\in\Omega$, there exists $\eta\in AC_R(I,G)$ such that $$\delta(\eta)=\gamma\quad\textrm{and}\quad \eta(0)=1_G.$$ If it exists, then $\Evol_G(\gamma):=\eta$ is uniquely determined. If, moreover, $G$ has a global chart, then $G$ is called \emph{locally $R$-regular} if it is locally $R$-semiregular and $\Omega$ can be chosen such that
$$\Evol_G\co \Omega\to AC_R(I,G)$$
is continuous.

We record for future reference the following results.
\begin{lemma}[{\cite[5.20]{Gl15}}]\label{lemma:5.20}
Set $I=[0,1]\subseteq\RR$, and let $G$ be an $R$-regular Fr\'echet--Lie group with Lie algebra $\g$. Then the evolution map $\Evol_G\co R(I,\g)\to AC_R(I,G)$ is smooth if and only if it is smooth as a map $R(I,\g)\to C(I,G)$. 
\end{lemma}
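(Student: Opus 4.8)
The plan is to treat the two implications separately, the forward one being immediate and the reverse one carrying all the content.

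For the easy direction, suppose $\Evol_G$ is smooth as a map into $AC_R(I,G)$. The inclusion $\iota\co AC_R(I,G)\hookrightarrow C(I,G)$ is a homomorphism of Fr\'echet--Lie groups which, read in the charts $AC_R(I,\varphi)$ and $C(I,\varphi)$ around the identity, is induced by the continuous linear inclusion $AC_R(I,E)\hookrightarrow C(I,E)$ recorded in \S\ref{section:ACM}; a continuous linear map is smooth, and $\iota$ intertwines the left translations of the two groups, so $\iota$ is smooth and $\iota\circ\Evol_G$ is smooth into $C(I,G)$.

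For the reverse direction, assume $\Evol_G\co R(I,\g)\to C(I,G)$ is smooth. Since smoothness is local on the domain, I would fix $\gamma_0\in R(I,\g)$ and set $\eta_0:=\Evol_G(\gamma_0)$. As $AC_R(I,G)$ is a Fr\'echet--Lie group, the left translation $\lambda_{\eta_0\inv}$ is a diffeomorphism of $AC_R(I,G)$, so it suffices to show that $\gamma\mapsto\zeta:=\eta_0\inv\cdot\Evol_G(\gamma)$ (pointwise product and inverse) is smooth into $AC_R(I,G)$ near $\gamma_0$. Now $\zeta(0)=1_G$, and by continuity of $\Evol_G$ into $C(I,G)$ the curve $\zeta$ stays uniformly close to the constant curve $1_G$; thus, after shrinking to a neighbourhood $\Omega$ of $\gamma_0$ in $R(I,\g)$, one has $\zeta(I)\subseteq U$ for a fixed chart $\varphi\co U\to V$ of $G$ with $1_G\in U$, so $\zeta$ lies in the single chart domain $AC_R(I,U)$. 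It then suffices to check that $\gamma\mapsto\zeta_\varphi:=\varphi\circ\zeta$ is smooth into $AC_R(I,V)$, i.e. that both coordinates $\gamma\mapsto\zeta_\varphi(0)$ and $\gamma\mapsto\zeta_\varphi'$ under the isomorphism $AC_R(I,V)\cong E\times R(I,E)$ are smooth.

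The first coordinate is the constant $\zeta_\varphi(0)=\varphi(1_G)$, hence smooth. For the derivative coordinate the defining identity $\delta(\Evol_G(\gamma))=\gamma$ is what makes the argument work: unwinding the definition of the logarithmic derivative in the chart $\varphi$ (\S\ref{section:LD}) gives the pointwise formula $\zeta_\varphi'(t)=\Theta(\zeta_\varphi(t),\delta(\zeta)(t))$, where $\Theta\co V\times\g\to E$, $\Theta(y,x):=d\varphi\big(\varphi\inv(y).x\big)$, is smooth and linear in $x$. By the product rule for the logarithmic derivative (\cite[\S5]{Gl15}) one has $\delta(\zeta)=\Ad(\Evol_G(\gamma)\inv)\delta(\eta_0\inv)+\gamma$, in which $\delta(\eta_0\inv)\in R(I,\g)$ is fixed. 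Since $(g,x)\mapsto\Ad(g\inv)x$ is smooth and linear in $x$, and $\gamma\mapsto\Evol_G(\gamma)$ is smooth into $C(I,G)$, the pointwise-application statement \cite[2.2]{Gl15} underlying Lemma~\ref{lemma:technical_lemma2} (in the form where the first argument ranges over continuous curves) shows that $\gamma\mapsto\delta(\zeta)$ is smooth into $R(I,\g)$. Feeding $\zeta_\varphi$, which is smooth into $C(I,V)$ because $\gamma\mapsto\zeta$ is smooth into $C(I,G)$ and $\varphi$ is a chart, together with $\delta(\zeta)$ into the same statement applied to $\Theta$ yields that $\gamma\mapsto\zeta_\varphi'$ is smooth into $R(I,E)$. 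Left-translating back by $\lambda_{\eta_0}$ then gives smoothness of $\Evol_G$ into $AC_R(I,G)$ near $\gamma_0$, as desired.

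I expect the main obstacle to be the derivative coordinate: one must recognise that smoothness into $C(I,G)$ already determines the derivative data through the ODE identity $\delta(\Evol_G(\gamma))=\gamma$, and then reduce the $\gamma$-dependence of $\zeta_\varphi'$ — including the $\Ad$-twist introduced by the left translation $\lambda_{\eta_0\inv}$ — to the fibrewise-linear pointwise-application lemma \cite[2.2]{Gl15}. The remaining ingredients (the single-chart reduction via left translation and the product rule for $\delta$) are routine bookkeeping.
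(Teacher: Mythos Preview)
The paper does not give its own proof of this lemma: it is recorded with the citation \cite[5.20]{Gl15} and no argument is supplied. There is thus no in-paper proof against which to compare your proposal.

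That said, your argument is essentially correct and is in the spirit of how such results are established in \cite{Gl15}: left-translate by $\eta_0^{-1}$ to land in a single chart, then use the defining identity $\delta(\Evol_G(\gamma))=\gamma$ and the fibrewise-linear pointwise-application lemma \cite[2.2]{Gl15} to control the derivative coordinate of $AC_R(I,E)\cong E\times R(I,E)$. One technical point deserves care: when you apply \cite[2.2]{Gl15} to the $\Ad$-twist, the first argument is $\Evol_G(\gamma)\in C(I,G)$, which need not take values in a single chart domain, whereas \cite[2.2]{Gl15} is formulated for open subsets of a locally convex space. The simplest fix is to rewrite
\[
\Ad(\Evol_G(\gamma)^{-1})\,\delta(\eta_0^{-1})
=\Ad(\zeta^{-1})\bigl(\Ad(\eta_0^{-1})\,\delta(\eta_0^{-1})\bigr),
\]
so that the $\gamma$-dependent factor $\zeta^{-1}$ lies in the symmetric chart domain $U$ and the remaining factor is a fixed element of $R(I,\g)$; then the hypothesis of \cite[2.2]{Gl15} is met via the chart $\varphi$. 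With this adjustment the argument goes through.
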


\begin{lemma}[{\cite[5.25]{Gl15}}]\label{lemma:5.25}
Let $G$ be a Fr\'echet--Lie group. Then 
\begin{enumerate}
\item
$G$ is $R$-semiregular if and only if $G$ is locally $R$-semiregular;
\item
$G$ is $R$-regular with a smooth evolution map if and only if $G$ is locally $R$-regular with a smooth evolution map.
\end{enumerate}
\end{lemma}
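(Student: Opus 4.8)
The plan is to prove both equivalences by a subdivision-and-concatenation argument; all the content lies in the implications ``local $\Rightarrow$ global'', the converses being essentially immediate. For (1), an $R$-semiregular $G$ is locally $R$-semiregular with $\Omega=R(I,\g)$. For (2), if the global $\Evol_G$ exists and is smooth, then its restriction to a sufficiently small $0$-neighbourhood $\Omega$ --- chosen, using continuity of $\Evol_G$ at $0$ together with $\Evol_G(0)=1_G$ the constant curve, so that $\Evol_G(\Omega)\subseteq AC_R(I,U)$ for a chart $U$ around $1_G$ --- witnesses local $R$-regularity with smooth evolution. The two facts driving the converse directions are the standard covariance properties of the logarithmic derivative from \S\ref{section:LD}: left-invariance, $\delta(g\cdot\eta)=\delta(\eta)$ for constant $g\in G$, and affine reparametrisation, $\delta(\eta\circ\sigma)(s)=\sigma'(s)\cdot\delta(\eta)(\sigma(s))$, both of which persist in the $AC_R$-setting.

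First I would treat (1). Let $\gamma\in R(I,\g)$ be arbitrary, and fix $V\subseteq\g$ with $R(I,V)\subseteq\Omega$, where $\Omega$ is as in the definition of local $R$-semiregularity. Since $\gamma$ is regulated, $\gamma(I)$ is relatively compact, hence bounded, so there is $n\in\NN$ with $\tfrac1n\gamma(I)\subseteq V$. Subdivide $I=[0,1]$ into the $n$ equal subintervals $I_j=[\tfrac{j-1}{n},\tfrac{j}{n}]$ and let $\phi_j\co I\to I_j$, $s\mapsto\tfrac{j-1}{n}+\tfrac sn$ be the affine rescalings. By the reparametrisation rule the regulated functions $\gamma_j:=\tfrac1n\,\gamma\circ\phi_j$ satisfy $\gamma_j(I)\subseteq\tfrac1n\gamma(I)\subseteq V$, so $\gamma_j\in\Omega$ and $\zeta_j:=\Evol_G(\gamma_j)\in AC_R(I,G)$ exists. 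Setting $g_0:=1_G$ and recursively $g_j:=g_{j-1}\cdot\zeta_j(1)$, define $\eta\co I\to G$ by $\eta(t):=g_{j-1}\cdot\zeta_j(nt-(j-1))$ for $t\in I_j$. Then $\eta$ is continuous, the two definitions agreeing at each node $\tfrac jn$ since $g_{j-1}\cdot\zeta_j(0)=g_{j-1}=g_{j-2}\cdot\zeta_{j-1}(1)$; and by left-invariance and reparametrisation of $\delta$ one computes $\delta(\eta)|_{I_j}=\gamma|_{I_j}$ for each $j$, whence $\delta(\eta)=\gamma$ and $\eta(0)=1_G$. As each piece is a left translate of an affinely reparametrised $AC_R$-curve, $\eta\in AC_R(I,G)$, and uniqueness of solutions (\S\ref{section:RReg}) makes $\Evol_G(\gamma):=\eta$ well defined; hence $G$ is $R$-semiregular.

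For (2) I would show that the globally defined $\Evol_G$ is smooth, which in particular yields continuity and thus $R$-regularity with smooth evolution. Smoothness being local on $R(I,\g)$, I fix $\gamma_0$ and argue on a neighbourhood. Because $\gamma_0(I)$ is bounded and the topology on $R(I,\g)$ is the supremum-seminorm one, I can choose an open $\gamma_0$-neighbourhood $\mathcal U$ and a single $n\in\NN$ with $\tfrac1n\gamma\in\Omega$ for all $\gamma\in\mathcal U$. Carrying out the construction of (1) on $\mathcal U$ with this fixed $n$, the value $\Evol_G(\gamma)$ is given by the explicit concatenation formula above, in which the constituents are all smooth: the maps $\gamma\mapsto\gamma_j=\tfrac1n\,\gamma\circ\phi_j$ are continuous linear; $\gamma_j\mapsto\zeta_j=\Evol_G(\gamma_j)$ is smooth by the local hypothesis; the endpoint evaluations $\gamma_j\mapsto\evol_G(\gamma_j)=\zeta_j(1)$ and the finite products yielding $g_j=\prod_{i\le j}\evol_G(\gamma_i)$ are smooth; and the pointwise action $(g,\zeta)\mapsto g\cdot\zeta$, $G\times AC_R(I,G)\to AC_R(I,G)$, together with the affine reparametrisations, is smooth for the Lie group structure of \S\ref{section:ACM}. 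Assembling these gives smoothness of $\gamma\mapsto\Evol_G(\gamma)$ on $\mathcal U$.

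The main obstacle is precisely this last assembly step: showing that the piecewise formula $t\mapsto g_{j-1}\cdot\zeta_j(nt-(j-1))$ depends smoothly on $\gamma$ as an element of the Fr\'echet--Lie group $AC_R(I,G)$, rather than merely piecewise on each $I_j$. The delicate points are that the partition nodes are matched by the \emph{data-dependent} translations $g_{j-1}(\gamma)$, so one must combine the smooth dependence of each piece with the smoothness of the restriction, corestriction and concatenation maps between the path groups $AC_R(I_j,G)$ and $AC_R(I,G)$; and that the choice of $n$, while locally constant near each $\gamma_0$, is not globally bounded, so smoothness must genuinely be verified neighbourhood-by-neighbourhood on $R(I,\g)$. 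Once continuity of $\Evol_G$ into $AC_R(I,G)$ has been recorded from the same formula, so that $G$ is $R$-regular and Lemma~\ref{lemma:5.20} applies, the smoothness of the gluing may in fact be checked into $C(I,G)$, where continuity of the concatenation is transparent, which streamlines this step considerably.
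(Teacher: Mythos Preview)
The paper does not give its own proof of this lemma: it is quoted verbatim as \cite[5.25]{Gl15} in \S\ref{section:RReg} and used as a black box, so there is nothing in the paper to compare your argument against directly. Your subdivision-and-concatenation strategy is the standard one and is essentially what underlies Gl\"ockner's proof in \cite{Gl15}; the outline is correct and the covariance properties of $\delta$ you invoke are exactly the right tools.

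One small comment on your final paragraph: the bootstrapping via Lemma~\ref{lemma:5.20} is sound, but note that establishing \emph{continuity} of $\Evol_G$ into $AC_R(I,G)$ already requires you to control the concatenation map $\prod_j AC_R(I_j,G)\to AC_R(I,G)$ at the level of the Lie group structure on $AC_R(I,G)$ from \S\ref{section:ACM}, so the gain from passing to $C(I,G)$ afterwards is modest. In practice it is cleanest to observe once and for all that restriction, affine reparametrisation, and concatenation (with matching endpoints) are smooth between the relevant $AC_R$-path groups---this is implicit in the Lie group structure of \S\ref{section:ACM}---and then your assembly in (2) goes through directly without the detour.
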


\begin{lemma}[{\cite[Theorems A and C]{Gl15}}]\label{lemma:Banach_R_regular}
Every Banach--Lie group is $R$-regular, with a smooth evolution map.
\end{lemma}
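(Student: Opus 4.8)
The plan is to use Lemma~\ref{lemma:5.25}\,(2) to reduce the global assertion to a local one, transport the logarithmic-derivative problem into a Carath\'eodory differential equation in the modelling \emph{Banach} space $\g$, solve it by a parametrised contraction, and then read off continuity and smoothness of the evolution map from the dependence of the fixed point on the datum $\gamma$.

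\textbf{Step 1: reduction to an equation in $\g$.} Let $G$ be a Banach--Lie group with $\g=T_1G$ and let $\exp_G$ map an open $0$-neighbourhood $V_G\subseteq\g$ diffeomorphically onto $U_G\subseteq G$. By Lemma~\ref{lemma:5.25}\,(2) it suffices to prove \emph{local} $R$-regularity with smooth evolution, i.e.\ to solve (\ref{eqn:IVP_delta}) for $\gamma$ ranging over a $0$-neighbourhood $\Omega\subseteq R(I,\g)$ and to check continuity and smoothness of $\Evol_G$ there. In the global chart $\exp_G\inv$, writing $\eta(t)=\exp_G(u(t))$ with $u\co I\to V_G$, the condition $\delta(\eta)=\gamma$, $\eta(0)=1_G$ becomes the Carath\'eodory initial value problem
\[
u'(t)=\Phi(u(t))\gamma(t),\qquad u(0)=0,
\]
in $\g$, where $\Phi\co V_G\to\End(\g)$, $\Phi(u)=\frac{\ad u}{1-e^{-\ad u}}$, is smooth with $\Phi(0)=\id_{\g}$; this is the inverse, in the left trivialisation, of the differential of $\exp_G$. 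Shrinking $V_G$, we may assume $\norm{\Phi(u)}\leq C$ and that $\Phi$ is Lipschitz on $V_G$.

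\textbf{Step 2: local existence and uniqueness.} For $\gamma\in R(I,\g)$ the right-hand side $g(t,u):=\Phi(u)\gamma(t)$ is regulated in $t$ for fixed $u$ (Lemma~\ref{lemma:RtoR}) and Lipschitz in $u$ uniformly in $t$, with constant proportional to $\norm{\gamma}_{\LL^{\infty}}$. By the Fundamental Theorem of Calculus (Lemma~\ref{lemma:thm_fond_CDI}), $u$ is an $AC_R$-Carath\'eodory solution if and only if it is a fixed point of
\[
\Psi_{\gamma}(u)(t):=\int_0^t\Phi(u(s))\gamma(s)\,ds
\]
on the Banach space $AC_R(I,\g)\cong\g\times R(I,\g)$. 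For $\gamma$ in a sufficiently small $0$-neighbourhood $\Omega$, the Lipschitz constant $L\norm{\gamma}_{\LL^{\infty}}$ is $<1$, so $\Psi_{\gamma}$ is a contraction on a small closed ball (staying inside $V_G$), and Banach's fixed-point theorem yields a unique $u=u_{\gamma}$. This gives local $R$-semiregularity with $\Evol_G(\gamma)=\exp_G\circ u_{\gamma}$.

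\textbf{Step 3: continuity and smoothness.} Since the contraction constant is uniform on $\Omega$ and $\gamma\mapsto\Psi_{\gamma}(u)$ is affine with $\norm{\Psi_{\gamma_1}(u)-\Psi_{\gamma_2}(u)}\leq C\norm{\gamma_1-\gamma_2}_{\LL^{\infty}}$, the standard dependence of a fixed point on a parameter gives local Lipschitz continuity of $\gamma\mapsto u_{\gamma}$, hence continuity of $\Evol_G$. For smoothness I would view $u_{\gamma}$ as the solution of $F(\gamma,u):=u-\Psi_{\gamma}(u)=0$ and apply the Implicit Function Theorem in Banach spaces: $\Psi$ is built from the smooth fibrewise-linear map $\Phi$ and the continuous linear integration operator, so $F$ is smooth, while $D_uF(\gamma,u)=\id-D_u\Psi_{\gamma}(u)$ is invertible on $\Omega$ by a Neumann series, the operator norm of $D_u\Psi_{\gamma}$ being $<1$ there. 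This yields smoothness of $\gamma\mapsto u_{\gamma}$, hence of $\Evol_G\co\Omega\to AC_R(I,G)$; by Lemma~\ref{lemma:5.20} it even suffices to verify smoothness into $C(I,G)$, which is immediate from $\Evol_G=\exp_G\circ u_{(\cdot)}$ and the continuity of $AC_R(I,\g)\hookrightarrow C(I,\g)$.

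\textbf{Main obstacle.} The delicate point is not existence but the regularity of $\Evol_G$ in the \emph{measurable} (regulated) setting. The crux is the smoothness of the superposition (Nemytskii) operator $(u,\gamma)\mapsto\big[s\mapsto\Phi(u(s))\gamma(s)\big]$ from $AC_R(I,\g)\times R(I,\g)$ into $R(I,\g)$, where the time-dependence is only regulated rather than continuous; this is what makes $\Psi$, and thus $F$, smooth. Establishing it requires the differentiation calculus of \S\ref{section:Diff} together with Lemma~\ref{lemma:RtoR} (to ensure the output is again regulated) and Lemma~\ref{lemma:technical_lemma2} (for the fibrewise-linear superposition with $\Phi$), and the Lipschitz control of $\Phi$ in $\norm{\cdot}_{\LL^{\infty}}$ is precisely what makes both the contraction and the invertibility of $D_uF$ uniform over $\Omega$.
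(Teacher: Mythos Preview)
The paper does not prove this lemma; it simply records it as a citation of \cite[Theorems~A and~C]{Gl15}, so there is no ``paper's own proof'' to compare against. Your outline is the standard route to such a result and is essentially correct: reduce to a local statement via Lemma~\ref{lemma:5.25}(2), rewrite $\delta(\eta)=\gamma$ in the exponential chart as a Carath\'eodory equation $u'=\Phi(u)\gamma$ in the Banach space $\g$, obtain the solution as a contraction fixed point, and upgrade to smoothness by the Implicit Function Theorem applied to $F(\gamma,u)=u-\Psi_\gamma(u)$.

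One small correction: the smoothness of the Nemytskii operator $(u,\gamma)\mapsto[s\mapsto\Phi(u(s))\gamma(s)]$ that you need is \emph{not} a consequence of Lemma~\ref{lemma:technical_lemma2} as stated, since there the first argument is a fixed point of $V$, not a curve. What you actually need is the stronger \cite[2.2]{Gl15} (from which Lemma~\ref{lemma:technical_lemma2} is derived), which gives smoothness of $C(I,V)\times R(I,E_2)\to R(I,F)$; composed with the continuous inclusion $AC_R(I,\g)\hookrightarrow C(I,\g)$ this yields exactly the smoothness of $\Psi$ that your Step~3 requires. With that adjustment your argument goes through.
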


\begin{lemma}\label{lemma:naturality_Evol}
Let $G,N$ be Banach Lie groups with respective Lie algebras $\g,\n$, and let $f\co G\to N$ be a Lie group morphism. Then for any $\alpha\in R(I,\g)$, we have
$$f\circ \Evol_G(\alpha)=\Evol_N(\Lie(f)\circ\alpha).$$
\end{lemma}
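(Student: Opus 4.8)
The plan is to deduce the identity from the naturality of the logarithmic derivative, equation~\eqref{eqn:naturality_delta}, together with the uniqueness of $AC_R$-Carath\'eodory solutions.

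First I would note that $\Lie(f)\co\g\to\n$ is a continuous linear map, hence takes step functions to step functions and commutes with uniform limits, so that $\Lie(f)\circ\alpha\in R(I,\n)$ (this also follows from Lemma~\ref{lemma:RtoR} applied to a constant first argument). Since $N$ is a Banach--Lie group, it is $R$-regular by Lemma~\ref{lemma:Banach_R_regular}, so $\Evol_N(\Lie(f)\circ\alpha)$ is well defined; likewise $\eta:=\Evol_G(\alpha)\in AC_R(I,G)$ exists, and by definition $\delta(\eta)=\alpha$ and $\eta(0)=1_G$.

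Next, I would apply \eqref{eqn:naturality_delta} to the smooth group homomorphism $f\co G\to N$ (Banach--Lie groups being in particular Fr\'echet--Lie groups) and the curve $\eta\in AC_R(I,G)$: this gives $f\circ\eta\in AC_R(I,N)$ and $\delta(f\circ\eta)=\Lie(f)\circ\delta(\eta)=\Lie(f)\circ\alpha$. Since also $(f\circ\eta)(0)=f(1_G)=1_N$, the curve $f\circ\eta$ satisfies the initial value problem \eqref{eqn:IVP_delta} with data $\Lie(f)\circ\alpha$ in place of $\gamma$ and inside $N$ rather than $G$, whose unique solution (see \S\ref{section:RReg}) is precisely $\Evol_N(\Lie(f)\circ\alpha)$. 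By uniqueness, $f\circ\eta=\Evol_N(\Lie(f)\circ\alpha)$, which is exactly the claim.

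The argument is essentially formal, so there is no real obstacle: all the analytic substance is hidden in the naturality relation \eqref{eqn:naturality_delta} for $\delta$ (established in \cite[5.2(b)]{Gl15}) and in the uniqueness of evolutions. The only points to check explicitly are that $\Lie(f)\circ\alpha$ is again regulated, so that the right-hand side even makes sense, and that the hypotheses of \eqref{eqn:naturality_delta} hold for $f$ and $\eta$ — both of which are immediate since $\Lie(f)$ is continuous linear and $f$ is a smooth homomorphism of Banach--Lie groups.
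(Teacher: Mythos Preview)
Your proof is correct and follows exactly the same approach as the paper, which simply notes that the claim readily follows from~\eqref{eqn:naturality_delta}. Your additional remarks justifying that $\Lie(f)\circ\alpha$ is regulated and that uniqueness applies merely spell out the implicit steps.
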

\begin{proof}
This readily follows from (\ref{eqn:naturality_delta}) in \S\ref{section:LD}. 
\end{proof}

\section{Half-Lie groups}\label{section:STG}
In this section, we introduce in more detail the main protagonists of this paper, namely the half-Lie groups.

\subsection{Definition and examples}
\begin{definition}\label{def:STG}
Let $E$ be a locally convex space and $G$ be a smooth manifold modelled on $E$. We recall from \S\ref{section:subSTG} that $G$ is called a (left) half-Lie group modelled on $E$ if $G$ admits a topological group structure (with respect to the manifold topology) such that all left multiplication maps $\lambda_g\co G\to G$ are smooth. In this paper, we will only consider half-Lie groups modelled on Banach spaces, and simply call them \emph{half-Lie groups}. 
\end{definition}

\begin{remark}
Note that one may replace left multiplication maps by right multiplication maps in Definition~\ref{def:STG}, to obtain a concept of \emph{right half-Lie group}. If $G$ is a right half-Lie group, one can also define the concepts presented in \S\ref{section:LD} and \S\ref{section:RReg}, using right logarithmic derivatives. Of course, if $m$ denotes the multiplication in $G$, then the group $G^{\mathrm{op}}$ obtained by equipping $G$ with the multiplication $m^{\mathrm{op}}(g,h):=m(h,g)$ instead of $m$ is a (left) half-Lie group.
\end{remark}

There are several sources of examples of interest of half-Lie groups, and we now list some of them. 

\begin{example}\label{example:semidirect}
Let $G,N$ be Banach--Lie groups, and let $\pi\co G\to \Aut(N)$ be a continuous automorphic action of $G$ on $N$, in the sense that the map 
$$G\times N\to N, \ (g,n)\mapsto \pi(g)n$$
is continuous. Consider the topological group $H:=N\rtimes_{\pi}G$, with multiplication
$$(n_1,g_1)(n_2,g_2)=(n_1\cdot\pi(g_1)n_2,g_1g_2)\quad \forall n_1,n_2\in N, \ g_1,g_2\in G.$$
Then all left multiplication maps $\lambda_{n,g}\co H\to H$ are smooth, whereas the right multiplication maps $\rho_{n,g}\co H\to H$ are in general only continuous. In particular, $H$ is a half-Lie group. 
\end{example}

\begin{example}
\emph{Extended mapping groups} $H:=N\rtimes_{\pi}G$ are concrete classes 
of Example~\ref{example:semidirect}: here $N=C^k(M,K)$ is the Banach--Lie group of 
$C^k$-maps ($k \in \NN$) 
from the compact smooth manifold $M$ to some Banach--Lie group $K$, 
and $G$ is a Banach--Lie group acting smoothly on $M$, which 
 yields a continuous action by $(\pi(g)f)(s) := f(g\inv s).$
An important special case arises for $M = \mathbb{S}^1$ on which the circle group 
$G=\mathbb{T}$ acts by rigid rotations (loop groups). 
\end{example}

\begin{example}
A second particular case of Example~\ref{example:semidirect} is the following: let $G$ be a Banach--Lie group (e.g. $G=\RR$) and consider a continuous representation $\pi\co G\to \GL(X)$ of $G$ on some Banach space $X$. Then the \emph{affine group} $H=X\rtimes_{\pi}G$ is a half-Lie group.
\end{example}

\begin{example}
A third particular case of Example~\ref{example:semidirect} is the following: let $G$ be a Banach--Lie group and let $\pi\co G\to\Aut(\mathcal A)$ be a continous automorphic action of $G$ on a unital Banach algebra $\mathcal A$ (see \cite{BR02}). 
Then the induced action of $G$ on the group $\mathcal A^{\times}$ of units of $\mathcal A$ yields a half-Lie group $H=\mathcal A^{\times}\rtimes_{\pi} G$.
\end{example}

\begin{example}
Let $M$ be a compact manifold and $r\in\NN$. Then the group $H:=\mathrm{Diff}^{r}(M)$ of $C^r$-diffeomorphisms of $M$ is a right half-Lie group (see \cite[\S VI.2]{Omori97}).
\end{example}

\begin{example}
Let $M$ be a compact smooth manifold or $\RR^d$, 
and $k\in\NN$ with $k>\frac{1}{2}\dim(M)+1$. 
Then the group of Sobolev $H^k$-diffeomorphisms of $M$ is a right half-Lie group. Such groups are important because they can be equipped with right-invariant $H^k$-Sobolev Riemannian metrics, turning them into strong Riemannian Hilbert manifolds (see e.g. \cite{BV17}).
\end{example}

\begin{example}
Set $I=[0,1]\subseteq\RR$, and let $G$ be a Fr\'echet--Lie group with Lie algebra $\g$. Replacing the space of regulated functions $R(I,\g)$ by the Lebesgue space $L^p(I,\g)$ for $p\in \NN\cup\{\infty\}$ (resp. by $C^k(I,\g)$ for $k\in \NN_0\cup\{\infty\}$) in \S\ref{section:RReg} yields a concept of $L^p$- (resp. $C^k$-) semiregularity for $G$ (see \cite[5.16]{Gl15}). For $\mathcal E\in\{R,L^p,C^k\}$, let $\Evol_G\co \mathcal E(I,\g)\to AC_{\mathcal E}(I,G)$ denote the corresponding evolution map. 

Assume that $G$ is $\mathcal E$-semiregular with continuous evolution map $\Evol_G$. Then 
$$\Evol_G\co \mathcal E(I,\g)\to AC_{\mathcal E}(I,G)_*:=\{\eta\in AC_{\mathcal E}(I,G) \ | \ \eta(0)=1_G\}$$
is bijective. Keeping the locally convex space structure on $H:=\mathcal E(I,\g)$ and transporting on $H$ the (pointwise) group multiplication from $AC_{\mathcal E}(I,G)_*$ then turns $H$ into a right half-Lie group (see \cite[5.34, 5.38(e)]{Gl15}).
\end{example}

\begin{example}
The authors of the paper \cite{KMR15} (in which the terminology \emph{half-Lie group} is introduced) construct extensions of certain groups of diffeomorphisms of $\RR^n$, which are right half-Lie groups.
\end{example}

\subsection{Some notation}\label{section:notation}

In this paper, we will focus on the study of the class of examples presented in Example~\ref{example:semidirect}.
In the sequel, unless otherwise stated, $G$ and $N$ will denote Banach Lie groups, and $\pi\co G\to \Aut(N)$ a \emph{continuous action} of $G$ on $N$ by automorphisms, in the sense that the action map $$\pi^{\wedge}\co G\times N\to N, \ (g,n)\mapsto \pi^{\wedge}(g,n):=\pi(g)n$$ is continuous. We set $\g:=\Lie(G)$ and $\n:=\Lie(N)$, and we let $\hh:=T_1H=\n\times\g$ denote the tangent space at the identity of the half-Lie group $H:=N\rtimes_{\pi}G$. For each $n\in N$, we denote by $$\pi^n\co G\to N, \ g\mapsto\pi(g)n$$ the orbit map for $\pi$. We also define the derived action and orbit maps $$\dot{\pi}^{\wedge}\co G\times\n\to\n, \ (g,v)\mapsto \Lie(\pi(g))v\quad\textrm{and}\quad \dot{\pi}^v\co G\to\n, \ g\mapsto \Lie(\pi(g))v$$ for each $v\in \n$. 
For each $k\in\NN\cup\{\infty\}$, we let $$N^k:=\{n\in N \ | \ \textrm{$\pi^n$ is a $C^k$-map}\}$$
denote the space of \emph{$C^k$-elements} for the action $\pi$, and
$$\n^k:=\{v\in\n \ | \ \textrm{$\dot{\pi}^v$ is a $C^k$-map}\}$$ 
the space of \emph{$C^k$-vectors} for the linear action $\dot{\pi}\co G\to \GL(\n),g\mapsto \dot{\pi}^{\wedge}(g,\cdot)$. Note that for any $k\in\NN$, one can define a linear map (cf. \cite[Section~4]{Ne10b})
$$\dd\dot{\pi}\co \g\to \Hom(\n^{k+1},\n^k), \ \dd\dot{\pi}(x)v:=d\dot{\pi}^v(1_G,x)=\frac{d}{dt}\Big|_{t=0}\dot{\pi}(\exp_G(tx))v.$$

Finally, we recall from Lemma~\ref{lemma:Banach_R_regular} that $G$ and $N$ are $R$-regular, with smooth evolution maps $$\Evol_G\co R(I,\g)\to AC_R(I,G)\quad\textrm{and}\quad\Evol_N\co R(I,\n)\to AC_R(I,N),$$ where $I:=[0,1]\subseteq\RR$. 

\section{\texorpdfstring{$R$}{R}-regularity of \texorpdfstring{$H$}{H} and consequences}\label{section:Rreg_consequences}
We use, throughout this section, the notation of \S\ref{section:notation}. We establish in this section the $R$-regularity of the half-Lie group $H=N\rtimes_{\pi}G$, and deduce some consequences, notably Trotter formulas for $H$.

\subsection{\texorpdfstring{$R$}{R}-regularity of \texorpdfstring{$H$}{H}}\label{section:RROHH}
We start with a few preparation lemmas.
\begin{lemma}\label{lemma:compact}
Let $\Gamma$ be a Banach--Lie group and let $(\alpha_n)_{n\in\NN}$ be a sequence of functions in $\LLL_{rc}^{\infty}(I,\Gamma)$ converging uniformly to some $\alpha\in\LLL_{rc}^{\infty}(I,\Gamma)$. Let $K_{\alpha}$ (resp. $K_{\alpha_n}$) denote the closure of $\alpha(I)$ (resp. $\alpha_n(I)$) in $\Gamma$. Then $K_\Gamma:=K_{\alpha}\cup\bigcup_{n\in\NN}K_{\alpha_n}$ is a compact subset of $\Gamma$.
\end{lemma}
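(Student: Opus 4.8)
The plan is to reduce the statement to the classical fact that a uniformly convergent sequence of continuous functions on a compact space has compact image in the target, once we pass to a suitable ambient chart. First, fix a norm on $\Lie(\Gamma)$ inducing the Banach topology, and recall that $\exp_\Gamma$ restricts to a diffeomorphism from an open ball $V_\Gamma\subseteq\Lie(\Gamma)$ onto an open identity neighbourhood $U_\Gamma\subseteq\Gamma$; in particular left translates $gU_\Gamma$ form an open cover of $\Gamma$, and on each such translate the map $x\mapsto g\exp_\Gamma(x)$ is a chart. The key point to establish is that the sequence $(\alpha_n)$ is \emph{eventually trapped near $\alpha$}: since the $\alpha_n$ converge to $\alpha$ uniformly on $I$, for every $\varepsilon>0$ there is $n_0$ with $\alpha(t)^{-1}\alpha_n(t)\in\exp_\Gamma(B_\varepsilon)$ for all $t\in I$ and all $n\ge n_0$, where $B_\varepsilon\subseteq\Lie(\Gamma)$ is the ball of radius $\varepsilon$ — here ``uniform convergence'' in $\LLL^\infty_{rc}(I,\Gamma)$ is understood via a metric on $\Gamma$, and one uses continuity of multiplication and of $\exp_\Gamma^{-1}$ near $1_\Gamma$ to translate metric smallness into membership in $\exp_\Gamma(B_\varepsilon)$.

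Granting this, here is the structure of the argument. Write $K_\alpha$ for the closure of $\alpha(I)$; it is compact because $\alpha\in\LLL^\infty_{rc}(I,\Gamma)$. For each of the finitely many ``bad'' indices $n<n_0$, the set $K_{\alpha_n}$ is compact by the same reason, so $F:=K_\alpha\cup\bigcup_{n<n_0}K_{\alpha_n}$ is compact as a finite union of compacta. It remains to control $\bigcup_{n\ge n_0}K_{\alpha_n}$. The trapping property above gives, for a fixed choice of $\varepsilon$ with $B_\varepsilon\subseteq V_\Gamma$, that for all $n\ge n_0$ and all $t\in I$,
\begin{equation*}
\alpha_n(t)\in\alpha(t)\exp_\Gamma(B_\varepsilon)\subseteq K_\alpha\cdot\exp_\Gamma(\overline{B_\varepsilon}).
\end{equation*}
The right-hand side is the image of the compact set $K_\alpha\times\exp_\Gamma(\overline{B_\varepsilon})$ under the continuous multiplication map $\Gamma\times\Gamma\to\Gamma$ (note $\overline{B_\varepsilon}$ is compact in the Banach space $\Lie(\Gamma)$ only if $\Gamma$ is finite-dimensional, so one must instead argue that $K_\alpha\cdot\exp_\Gamma(\overline{B_\varepsilon})$ is contained in a compact set: use that $\{\alpha(t)^{-1}\alpha_n(t):n\ge n_0,\ t\in I\}$ together with $1_\Gamma$ is itself relatively compact because it is a subset of $\exp_\Gamma(B_\varepsilon)$ whose closure we can take inside $U_\Gamma$, and then the product of this compact set with $K_\alpha$ is compact as a continuous image of a product of compacta). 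Hence $\bigcup_{n\ge n_0}\alpha_n(I)$ is relatively compact, so $\bigcup_{n\ge n_0}K_{\alpha_n}$ is contained in a compact set, and therefore $K_\Gamma=F\cup\bigcup_{n\ge n_0}K_{\alpha_n}$ is a closed subset of a compact set, hence compact.

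The main obstacle is the one flagged parenthetically: in an infinite-dimensional Banach--Lie group, closed balls in $\Lie(\Gamma)$ are not compact, so one cannot simply say ``the product of two closed balls under $\exp_\Gamma$ is compact''. The clean way around this is to avoid closed balls of $\Lie(\Gamma)$ entirely and work with the genuinely (relatively) compact set $S:=\{\,\alpha(t)^{-1}\alpha_n(t)\ :\ t\in I,\ n\ge n_0\,\}\cup\{1_\Gamma\}$: its relative compactness follows from uniform convergence $\alpha_n\to\alpha$ (a Cauchy/convergence argument, or simply noting that $\overline S$ is a closed subset of the compact closure of $\{\alpha(t)^{-1}\alpha_n(t)\}$, which is compact because the doubly-indexed family converges to the diagonal value $1_\Gamma$ in an equicontinuous-on-a-compact fashion). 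Then $K_\Gamma$ is covered by $K_\alpha$ together with the continuous image $m(K_\alpha\times\overline S)$ of a product of two compact sets, hence is relatively compact; being a closed union (each $K_{\alpha_n}$ and $K_\alpha$ is closed, and only finitely many ``escape'' any given neighbourhood) it is compact. Everything else is routine point-set topology together with the standard local structure of Banach--Lie groups recalled in \S\ref{section:LG_NeJap}.
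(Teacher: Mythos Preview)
Your strategy can be made to work, but as written it has a genuine gap at precisely the point you flag as ``the main obstacle''. You reduce everything to the relative compactness of $S=\{\alpha(t)^{-1}\alpha_n(t):t\in I,\ n\ge n_0\}\cup\{1_\Gamma\}$, and then assert this with two alternative justifications, neither of which holds up. The first (``$\overline S$ is a closed subset of the compact closure of $\{\alpha(t)^{-1}\alpha_n(t)\}$'') is circular: that \emph{is} $\overline S$. The second (``the doubly-indexed family converges to $1_\Gamma$ in an equicontinuous-on-a-compact fashion'') has no content here: the $\alpha,\alpha_n$ need not even be continuous, and no Arzel\`a--Ascoli-type statement applies. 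A correct argument for the compactness of $\overline S$ goes by sequential compactness (if $n_j$ is bounded, use that $\{\alpha(t)^{-1}\alpha_n(t):t\in I\}\subseteq K_\alpha^{-1}K_{\alpha_n}$ is relatively compact; if $n_j\to\infty$, uniform convergence gives $s_j\to 1_\Gamma$), but this is of the same nature and difficulty as proving the lemma directly. You also owe an argument that $K_\Gamma$ is closed in order to pass from ``contained in a compact set'' to ``compact''; the parenthetical ``only finitely many escape any given neighbourhood'' is again a sketch of the same sequential argument, not a proof.

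The paper bypasses all of this by arguing directly from the open-cover definition. Given an open cover $\mathcal U$ of $K_\Gamma$, extract a finite subcover $U_1,\dots,U_m$ of the compact set $K_\alpha$; since $U:=\bigcup_i U_i$ is an open neighbourhood of the compact $K_\alpha$ in the metrisable space $\Gamma$, it contains an $\varepsilon$-neighbourhood of $K_\alpha$. Uniform convergence then gives $N$ with $K_{\alpha_n}\subseteq U$ for all $n\ge N$, and one finishes by adjoining a finite subcover of the compact set $\bigcup_{n<N}K_{\alpha_n}$. No charts, no exponential map, no auxiliary set $S$, and no separate closedness argument are needed.
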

\begin{proof}
Note first that Banach--Lie groups are first countable and hence metrisable by the 
Birkhoff--Kakutani Theorem. We equip $\Gamma$ with a metric $\dist_\Gamma\co \Gamma\times \Gamma\to\RR$ compatible with the topology.
Let $\mathcal U$ be an open cover of $K_\Gamma$. Then $\mathcal U$ is also an open cover of each of the compact subsets $K_{\alpha}$ and $K_{\alpha_n}$. Let $(U_i)_{1\leq i\leq n}$ be a finite subcover of $K_{\alpha}$. Then $U:=\bigcup_{i=1}^{n}{U_i}$ contains an $\varepsilon$-neighbourhood of $K_{\alpha}$ for some $\varepsilon>0$. 
Since $(\alpha_n)_{n\in\NN}$ converges uniformly to $\alpha$, there is some $N\in\NN$ such that
$$\dist_\Gamma(\alpha_n(t),\alpha(t))<\varepsilon\quad\textrm{for all $t\in I$ and $n\geq N$.}$$
In particular, $K_{\alpha}\cup\bigcup_{n\geq N}K_{\alpha_n}\subseteq U$. Completing $(U_i)_{1\leq i\leq n}$ with a finite subcover of the compact set $\bigcup_{n=1}^{N-1}K_{\alpha_n}$ then yields the desired finite subcover of $K_\Gamma$.
\end{proof}

\begin{lemma}\label{lemma:naturality_exp}
Let $V$ be an open $0$-neighbourhood in $\n$ such that $\exp_V:=\exp_N|_{V}\co V\to \exp_N(V)$ is a diffeomorphism. Let $U_G\subseteq G$ and $V_1\subseteq V$ be identity neighbourhoods such that $\pi(U_G)\exp_N(V_1)\subseteq\exp_N(V)$. Then  
$$(\exp_{V})\inv(\pi(g)\exp_N(v))=\dot{\pi}(g)v \quad\textrm{for all $g\in U_G$ and $v\in V_1$.}$$
\end{lemma}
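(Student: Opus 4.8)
The plan is to exploit the fact that, although the joint action map $\pi^{\wedge}$ is merely continuous, each individual map $\pi(g)$ is, by the very definition of $\Aut(N)$, a \emph{smooth} automorphism of the Banach--Lie group $N$, whose differential at $1_N$ is precisely the continuous Lie algebra automorphism $\dot\pi(g)=\Lie(\pi(g))$ of $\n$. First I would invoke the standard fact that a smooth homomorphism of Banach--Lie groups intertwines the exponential functions: for fixed $g\in G$ and $v\in\n$, the curve $t\mapsto \pi(g)(\exp_N(tv))$ is a $C^1$ one-parameter subgroup of $N$ with velocity $\Lie(\pi(g))v=\dot\pi(g)v$ at $t=0$, hence, by uniqueness of one-parameter subgroups, coincides with $t\mapsto \exp_N(t\dot\pi(g)v)$; evaluating at $t=1$ gives
\[
\pi(g)\exp_N(v)=\exp_N(\dot\pi(g)v)\qquad\textrm{for all }g\in G,\ v\in\n.
\]

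Next I would apply the local inverse $(\exp_V)\inv$. For $g\in U_G$ and $v\in V_1$, the hypothesis $\pi(U_G)\exp_N(V_1)\subseteq\exp_N(V)$ ensures that $\exp_N(\dot\pi(g)v)=\pi(g)\exp_N(v)$ lies in $\exp_N(V)$, so that $(\exp_V)\inv(\pi(g)\exp_N(v))$ is well defined and equals the unique element $w\in V$ with $\exp_N(w)=\exp_N(\dot\pi(g)v)$. The claim thus reduces to the assertion that $\dot\pi(g)v$ is exactly this preimage lying in $V$; equivalently, that $\dot\pi(g)v\in V$, since then injectivity of $\exp_V=\exp_N|_V$ forces $w=\dot\pi(g)v$.

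This identification of the correct preimage — ruling out other sheets of $\exp_N\inv(\exp_N(V))$ — is the only genuine obstacle, and I would settle it by a connectedness argument along the segment from $0$ to $v$ (assuming, as one may in all relevant applications, that $V$ and $V_1$ are convex). Consider the continuous path $c\co[0,1]\to V$, $c(t):=(\exp_V)\inv(\pi(g)\exp_N(tv))$, which is well defined since $tv\in V_1$, satisfies $c(0)=0$, and fulfils $\exp_N(c(t))=\exp_N(t\dot\pi(g)v)$ for all $t$ by the displayed formula. Near $t=0$ both $c(t)$ and $t\dot\pi(g)v$ lie in a fixed small $0$-neighbourhood on which $\exp_N$ is injective, so they agree there; if $s_0:=\sup\{s\in[0,1]\mid c(t)=t\dot\pi(g)v\textrm{ for all }t\le s\}$ were $<1$, then $c(s_0)=s_0\dot\pi(g)v$ would lie in the open set $V$, whence $t\dot\pi(g)v\in V$ for $t$ near $s_0$, and injectivity of $\exp_N|_V$ would propagate the equality $c(t)=t\dot\pi(g)v$ slightly beyond $s_0$, contradicting maximality. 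Hence $s_0=1$, and evaluating at $t=1$ yields $(\exp_V)\inv(\pi(g)\exp_N(v))=\dot\pi(g)v$, as desired.
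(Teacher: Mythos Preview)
Your proof is correct and follows essentially the same line as the paper's: both invoke the naturality relation $\pi(g)\exp_N(tv)=\exp_N(t\dot\pi(g)v)$ and then argue along the segment $t\in[0,1]$ (implicitly assuming $V_1$ star-shaped, as the paper also does) to identify the correct preimage in $V$. The only difference is cosmetic: where the paper observes that $L(t):=(\exp_V)^{-1}(\pi(g)\exp_N(tv))$ is a local one-parameter subgroup of $(\n,+)$ agreeing with $t\mapsto t\dot\pi(g)v$ near $0$, you reach the same conclusion via an open--closed connectedness argument, which is arguably more self-contained since it avoids the external reference.
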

\begin{proof}
Fix some $g\in U_G$ and $v\in V_1$, and consider for $t\in [0,1]$ the functions
$$L(t):=(\exp_{V})\inv(\pi(g)\exp_N(tv))\quad\textrm{and}\quad R(t):=\dot{\pi}(g)tv.$$
Note first that $\pi(g)\exp_N(tv)\in \exp_N(V)$ for all $t\in [0,1]$, so that $L(t)$ is well-defined. Moreover, since $\pi(g)\exp_N(tv)=\exp_N(\dot{\pi}(g)tv)$ by naturality of the exponential function, we have $L(t)=R(t)$ whenever $R(t)\in V$, which occurs for all sufficiently small values of $t$ (say for $t\in [0,\epsilon]$ for some $\epsilon>0$), as $\dot{\pi}(g)$ acts continuously on $\n$. On the other hand, for all $s,t\in [0,1]$ with $s+t\in [0,1]$, we have the relations $L(s+t)=L(s)+L(t)$ (see \cite[Example~IV.2.4(a)]{NeJap}) and $R(s+t)=R(s)+R(t)$. Hence $L(t)$ and $R(t)$ are (locally) smooth one-parameter subgroups of the Banach--Lie algebra $(\n,+)$, which coincide for $t\in [0,\epsilon]$, and hence also for all $t\in [0,1]$. In particular, $L(1)=R(1)$, as desired.
\end{proof}

\begin{lemma}\label{lemma:continuity_Tpi}
The linear action $\dot{\pi}^{\wedge}\co G\times\n\to\n,(g,v)\mapsto \dot{\pi}(g)v$ is continuous.
\end{lemma}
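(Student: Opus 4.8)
The plan is to deduce joint continuity of $\dot\pi^\wedge$ from the local identity furnished by Lemma~\ref{lemma:naturality_exp}, and then to spread it over all of $G\times\n$ using the fact that $\dot\pi\co G\to\GL(\n)$ is a group homomorphism with values in the topological automorphisms of $\n$. The point to keep in mind is that continuity of $\pi^\wedge$ does \emph{not} by itself yield continuity of the derived action --- a priori $\dot\pi$ is only separately continuous in $g$ and (linearly) in $v$ --- so Lemma~\ref{lemma:naturality_exp} is genuinely the input that does the work.

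\textbf{Step 1 (continuity near $(1_G,0)$).} First I would fix the $0$-neighbourhood $V\subseteq\n$ and the identity neighbourhoods $U_G\subseteq G$, $V_1\subseteq V$ as in Lemma~\ref{lemma:naturality_exp}, so that on $U_G\times V_1$ one has $\dot\pi(g)v=(\exp_V)\inv(\pi(g)\exp_N(v))$. The right-hand side is a composite of the continuous maps $(g,v)\mapsto(g,\exp_N(v))$, then $\pi^\wedge$, then $(\exp_V)\inv$ (the last one being defined on the relevant image precisely because $\pi(U_G)\exp_N(V_1)\subseteq\exp_N(V)$). Hence $\dot\pi^\wedge$ is continuous on the neighbourhood $U_G\times V_1$ of $(1_G,0)$; in particular $\dot\pi(g)v\to0$ whenever $(g,v)\to(1_G,0)$.

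\textbf{Step 2 (orbit maps at the identity).} Next, for a fixed $v_0\in\n$ I would choose $m\in\NN$ with $v_0/m\in V_1$ and use $\dot\pi(g)v_0=m\,\dot\pi(g)(v_0/m)$ for $g\in U_G$; by Step 1 this depends continuously on $g$, and since $\dot\pi(1_G)v_0=v_0$ it follows that $\dot\pi(g)v_0\to v_0$ as $g\to1_G$. (This is the statement $\n^0=\n$, used implicitly elsewhere.)

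\textbf{Step 3 (globalisation).} Finally, given an arbitrary $(g_0,v_0)\in G\times\n$, I would write $\dot\pi(g)v=\dot\pi(g_0)\big(\dot\pi(g_0\inv g)v\big)$; since $\dot\pi(g_0)$ is a topological automorphism of $\n$ and $(g,v)\mapsto(g_0\inv g,v)$ is a homeomorphism of $G\times\n$, it suffices to prove continuity of $\dot\pi^\wedge$ at $(1_G,v_0)$. There I would split $\dot\pi(g)v-v_0=\dot\pi(g)(v-v_0)+(\dot\pi(g)v_0-v_0)$: as $(g,v)\to(1_G,v_0)$ one has $(g,v-v_0)\to(1_G,0)$, so the first term tends to $0$ by Step 1 and the second by Step 2. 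The only mild subtlety in the whole argument is this globalisation step --- one must use the homomorphism property to transport continuity from $(1_G,0)$ to an arbitrary base point --- but no delicate estimate is involved, so I do not anticipate a real obstacle.
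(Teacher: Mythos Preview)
Your argument is correct, but it takes a somewhat different route from the paper's. Both proofs start the same way, using Lemma~\ref{lemma:naturality_exp} to express $\dot\pi(g)v$ on $U_G\times V_1$ as a composite of continuous maps. From there, however, the paper only extracts \emph{separate} continuity: it shows each orbit map $\dot\pi^v$ is continuous on all of $G$ (using $\dot\pi^v\circ\lambda_g=\dot\pi(g)\circ\dot\pi^v$ and linearity in $v$), and then appeals to an external result, \cite[Lemma~5.2]{Ne10b}, which upgrades continuity of the orbit maps to joint continuity of the action, using that $G$ is metrisable. That lemma is essentially a uniform boundedness argument. Your proof, by contrast, is self-contained: you observe that Step~1 already gives \emph{joint} continuity on $U_G\times V_1$, and then globalise directly via the splitting $\dot\pi(g)v-v_0=\dot\pi(g)(v-v_0)+(\dot\pi(g)v_0-v_0)$ together with the homomorphism property. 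The paper's route is terser once the cited lemma is in hand; yours avoids the external reference altogether and makes the elementary nature of the result transparent.
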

\begin{proof}
Let $V,V_1$ and $U_G$ be as in Lemma~\ref{lemma:naturality_exp}.
For each $v\in \n$, consider the maps 
$$\pi^{\exp_N(v)}\co G\to N, \ g\mapsto \pi(g)\exp_N(v)\quad\textrm{and}\quad \dot{\pi}^v\co G\to \n, \ g\mapsto \dot{\pi}(g)v.$$
Then $\pi^{\exp_N(v)}$ is continuous by hypothesis, and hence $\dot{\pi}^v=(\exp_{V})\inv\circ \pi^{\exp_N(v)}$ is also continuous on $U_G$ for all $v\in V_1$ by Lemma~\ref{lemma:naturality_exp}.
From the relation $\dot{\pi}^v\circ\lambda_g=\dot{\pi}(g)\circ \dot{\pi}^v$ and the fact that $\dot{\pi}^v(g)$ is continuous linear in $v$ for each $g\in G$, we then deduce that $\dot{\pi}^v$ is continuous on $G$ for all $v\in\n$. Since $G$ is metrisable, this implies together with \cite[Lemma~5.2]{Ne10b} that $\dot{\pi}^{\wedge}$ is continuous, as desired.
\end{proof}

\begin{theorem}\label{thm:Rreg_Glinear}
Let $G,N$ be Banach--Lie groups, and let $\pi\co G\to \Aut(N)$ be a continuous action. Then $H:=N\rtimes_{\pi}G$ is $R$-regular, i.e. for any $[\gamma]\in R(I,\hh)$ there exists a unique $\widetilde{\gamma}=\Evol_H(\gamma)\in AC_R(I,H)$ with 
\begin{equation}\label{eqn:IVP_H}
\delta(\widetilde{\gamma})=[\gamma]\quad\textrm{and}\quad\widetilde{\gamma}(0)=1_H,
\end{equation}
and the map $\Evol_H\co R(I,\hh)\to AC_R(I,H)$ is continuous.
\end{theorem}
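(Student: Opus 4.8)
The plan is to use the semidirect-product decomposition of $H$ to decouple the equation $\delta(\widetilde\gamma)=[\gamma]$, $\widetilde\gamma(0)=1_H$ into two equations, one in $G$ and one in $N$, both of which are $R$-regular Banach--Lie groups (Lemma~\ref{lemma:Banach_R_regular}). Writing a curve in $H$ as $\widetilde\gamma=(n,g)$ — so that $AC_R(I,H)=AC_R(I,N)\times AC_R(I,G)$ through the product charts — and $[\gamma]=(\gamma_N,\gamma_G)\in R(I,\n)\times R(I,\g)$, the first task is to compute the logarithmic derivative of $\widetilde\gamma$. Differentiating $\lambda_{(n,g)}(n',g')=(n\cdot\pi(g)n',gg')$ and using that each $\pi(g)$ is a group automorphism together with the naturality of $\Lie(\pi(g))$, I expect the identity
\[ \delta(\widetilde\gamma)(t)=\big(\dot\pi(g(t))^{-1}\cdot\delta(n)(t),\ \delta(g)(t)\big)\in\n\times\g \]
to hold (almost everywhere, in the $AC_R$-setting): the $\g$-component is the image of $\delta(\widetilde\gamma)$ under $\Lie$ of the projection $H\to G$, while the $\n$-component comes from differentiating $t'\mapsto\widetilde\gamma(t)^{-1}\widetilde\gamma(t')$ at $t'=t$.

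Granting this, the system $\delta(\widetilde\gamma)=[\gamma]$, $\widetilde\gamma(0)=1_H$ is equivalent to $\delta(g)=\gamma_G$, $g(0)=1_G$, together with $\delta(n)=\beta$, $n(0)=1_N$, where $\beta(t):=\dot\pi(g(t))\cdot\gamma_N(t)$. By $R$-regularity of $G$ the first equation has the unique solution $g:=\Evol_G(\gamma_G)\in AC_R(I,G)$. Since $g$ is then a continuous (hence compact-image) curve and $\dot\pi^{\wedge}\co G\times\n\to\n$ is continuous and linear in its second argument (Lemma~\ref{lemma:continuity_Tpi}), I would check that $\beta=\dot\pi^{\wedge}(g(\cdot),\gamma_N(\cdot))\in R(I,\n)$ in the spirit of Lemma~\ref{lemma:RtoR}: approximating a regulated representative of $\gamma_N$ uniformly by step functions $\sigma_j$, each $\dot\pi^{\wedge}(g(\cdot),\sigma_j(\cdot))$ is continuous on each member of a finite partition of $I$, hence regulated, and $\dot\pi^{\wedge}(g(\cdot),\sigma_j(\cdot))\to\beta$ uniformly because $\dot\pi^{\wedge}$ is uniformly continuous on the compact set $\overline{g(I)}\times\big(\overline{\gamma_N(I)}\cup\bigcup_j\overline{\sigma_j(I)}\big)$ (compactness via Lemma~\ref{lemma:compact}). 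Then $R$-regularity of $N$ yields the unique solution $n:=\Evol_N(\beta)\in AC_R(I,N)$, and $\widetilde\gamma:=(n,g)\in AC_R(I,H)$ is the unique $AC_R$-solution of (\ref{eqn:IVP_H}). This proves $R$-semiregularity, with
\[ \Evol_H(\gamma_N,\gamma_G)=\Big(\Evol_N\big(\dot\pi^{\wedge}(\Evol_G(\gamma_G)(\cdot),\gamma_N(\cdot))\big),\ \Evol_G(\gamma_G)\Big). \]

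It remains to show that $\Evol_H$ is continuous. By the displayed formula, and since $\Evol_G$ and $\Evol_N$ are continuous (in fact smooth, Lemma~\ref{lemma:Banach_R_regular}) and $AC_R(I,G)\hookrightarrow C(I,G)$ is continuous, the continuity of $\Evol_H$ will follow by composition once we know that
\[ C(I,G)\times R(I,\n)\to R(I,\n),\qquad(g,\gamma_N)\mapsto\dot\pi^{\wedge}(g(\cdot),\gamma_N(\cdot)) \]
is continuous. This is the step I expect to be the \emph{main obstacle}, precisely because $\dot\pi^{\wedge}$ is only continuous — not smooth, and not uniformly continuous on all of $G\times\n$ — so one cannot reduce it to a statement about Banach spaces alone. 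The remedy is again Lemma~\ref{lemma:compact}: for sequences $g_k\to g$ in $C(I,G)$ and $\gamma_N^{(k)}\to\gamma_N$ in $R(I,\n)$, the sets $K_G:=\overline{g(I)}\cup\bigcup_k\overline{g_k(I)}\subseteq G$ and $K_{\n}:=\overline{\gamma_N(I)}\cup\bigcup_k\overline{\gamma_N^{(k)}(I)}\subseteq\n$ are compact (regulated functions have relatively compact image), $\dot\pi^{\wedge}$ is uniformly continuous on the compact metric space $K_G\times K_{\n}$, and $(g_k,\gamma_N^{(k)})\to(g,\gamma_N)$ uniformly with values in $K_G\times K_{\n}$; hence $\dot\pi^{\wedge}(g_k(\cdot),\gamma_N^{(k)}(\cdot))\to\dot\pi^{\wedge}(g(\cdot),\gamma_N(\cdot))$ in $R(I,\n)$, and since $R(I,\n)$ is metrisable this sequential continuity suffices. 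Combined with the formula for $\Evol_H$, this gives the continuity of the evolution map and completes the proof.
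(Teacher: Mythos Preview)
Your proposal is correct and follows essentially the same approach as the paper: you decouple the IVP via the semidirect-product structure into $g=\Evol_G(\gamma_G)$ and $n=\Evol_N(\dot\pi^{\wedge}(g(\cdot),\gamma_N(\cdot)))$, verify that the twisted curve is regulated via step-function approximation and uniform continuity of $\dot\pi^{\wedge}$ on compacta (Lemmas~\ref{lemma:compact} and~\ref{lemma:continuity_Tpi}), and establish continuity of $\Evol_H$ by the same compactness/Heine--Cantor argument. The only cosmetic difference is that you phrase the continuity step as sequential continuity on the metrisable domain $C(I,G)\times R(I,\n)$, whereas the paper works directly with $R(I,\n)\times R(I,\g)$; both are equivalent here.
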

\begin{proof}
For regulated functions $\alpha\co I\to\n$ and $\beta\co I\to\g$, we define the map
$$f_{\alpha,\beta}\co I\to\n, \ t\mapsto \dot{\pi}(\Evol_G(\beta)(t))\alpha(t).$$

Fix some equivalence class $[\gamma]\in R(I,\hh)=R(I,\n)\times R(I,\g)$, which we write as
$[\gamma]=([\alpha],[\beta])$ for some regulated functions $\alpha\co I\to \n$ and $\beta\co I\to\g$. 

Recall first that the IVP
\begin{equation}\label{eqn:IVP_beta_G}
\delta(\widetilde{\beta})=[\beta], \quad \widetilde{\beta}(0)=1_G
\end{equation}
has a unique solution $\widetilde{\beta}:=\Evol_G(\beta)\in AC_R(I,G)$ and that the map $\Evol_G\co R(I,\g)\to AC_R(I,G)$ is smooth, hence continuous.

\noindent
$\bullet$ \underline{Claim 1}: \emph{If the sequences $(\alpha_n)_{n\in\NN}\subseteq \mathcal R(I,\n)$ and $(\beta_n)_{n\in\NN}\subseteq \mathcal R(I,\g)$ converge uniformly to $\alpha$ and $\beta$, respectively, as $n\to\infty$, then $f_{\alpha_n,\beta_n}(t)\to f_{\alpha,\beta}(t)$ as $n\to\infty$, uniformly in $t$.}

Indeed, fix some $\varepsilon>0$. Let $K_{\alpha}$ (resp. $K_{\alpha_n}$) denote the closure of $\alpha(I)$ (resp. $\alpha_n(I)$) in $\n$ and set $K_{\beta}:=\Evol_G(\beta)(I)$ and $K_{\beta_n}:=\Evol_G(\beta_n)(I)$. Note that $K_{\beta}$ and each $K_{\beta_n}$ is compact in $G$ since $\Evol_G$ takes values in a space of continuous functions. Lemma~\ref{lemma:compact} then implies that $K_{\n}:=K_{\alpha}\cup\bigcup_{n\in\NN}K_{\alpha_n}$ and $K_G:=K_{\beta}\cup\bigcup_{n\in\NN}K_{\beta_n}$ are compact, and hence $K:=K_G\times K_{\n}$ is a compact subset of $G\times \n$. We equip $G$ with a metric $\dist_G\co G\times G\to\RR$ compatible with the topology. Since the linear action $\dot{\pi}^{\wedge}\co G\times  \n\to \n,(g,v)\mapsto \dot{\pi}(g)v$ is continuous by Lemma~\ref{lemma:continuity_Tpi}, it is uniformly continuous on $K$ by the Heine--Cantor Theorem, and hence there exists some $\delta>0$ such that
\begin{equation*}
\forall g,h\in K_G, v,w\in K_{\n}: \ \dist_G(g,h)< \delta, \norm{v-w}_{\n}<\delta \implies \norm{\dot{\pi}(g)v-\dot{\pi}(h)w}_{\n}< \varepsilon.
\end{equation*}
Note also that $\Evol_G(\beta_n)$ converges uniformly to $\Evol_G(\beta)$ by continuity of $\Evol_G$, and hence there is some $N\in\NN$ such that 
\begin{equation*}
\dist_G(\Evol_G(\beta_n)(t),\Evol_G(\beta)(t))<\delta \quad\textrm{and}\quad \norm{\alpha_n(t)-\alpha(t)}_{\n}<\delta\quad\textrm{for all $t\in I$ and $n\geq N$,}
\end{equation*}
yielding the claim.

\noindent
$\bullet$ \underline{Claim 2}: \emph{$f_{\alpha,\beta}\in \mathcal R(I,\n)$.}

Indeed, assume first that $\alpha$ is continuous. Then $f_{\alpha,\beta}$ is the composition of the continuous maps 
$$I\to G\times \n, \ t\mapsto (\Evol_G(\beta)(t),\alpha(t))\quad\textrm{and}\quad G\times \n\to \n, \ (g,v)\mapsto \dot{\pi}(g)v,$$
and is in particular continuous, hence uniformly continuous, hence regulated. In particular, if $\alpha$ is a step function (hence piecewise continuous), then $f_{\alpha,\beta}$ is also regulated. Finally, if $\alpha$ is regulated, then it is a uniform limit of step functions, so that $f_{\alpha,\beta}$ is a uniform limit of regulated functions by Claim~1, and is therefore regulated as well.

\noindent
$\bullet$ \underline{Claim 3}: \emph{The half-Lie group $H$ has an evolution map given by $$\Evol_H\co R(I,\hh)\to AC_R(I,H), \ ([\alpha],[\beta])\mapsto (\Evol_N(f_{\alpha,\beta}),\Evol_G(\beta)).$$ }

Indeed, a pair $(\widetilde{\alpha},\widetilde{\beta})\in AC_R(I,H)=AC_R(I,N)\times AC_R(I,G)$ is a solution of the IVP (\ref{eqn:IVP_H}) if and only if it is an $AC_R$-Carath\'eodory solution to  
$$y'=f(t,y),\quad y(0)=1_G$$
with $f\co I\times H\to TH$ given for all $t\in I$, $n\in N$ and $g\in G$ by
\begin{equation*}
\begin{aligned}
f(t,n,g):={}& (n,g).\gamma(t)=T(\lambda_{(n,g)})(\alpha(t),\beta(t))\\
={}&\frac{d}{ds}\Big|_0(n,g)(\exp_N(s\alpha(t)),\exp_G(s\beta(t)))\\
={}&\frac{d}{ds}\Big|_0\big(n\cdot\pi(g)\exp_N(s\alpha(t)),g\cdot\exp_G(s\beta(t))\big)\\
={}&\big(n.\dot{\pi}(g)\alpha(t),g.\beta(t)\big).\\
\end{aligned}
\end{equation*}
Hence $(\widetilde{\alpha},\widetilde{\beta})$ is a solution of (\ref{eqn:IVP_H}) if and only if $\widetilde{\beta}$ is a solution of the IVP (\ref{eqn:IVP_beta_G}), so that $\widetilde{\beta}=\Evol_G(\beta)$, and $\widetilde{\alpha}$ is a solution of the IVP
$$\delta(\widetilde{\alpha})=[f_{\alpha,\beta}], \quad \widetilde{\alpha}(0)=1_N,$$
as claimed.

\noindent
$\bullet$ \underline{Claim 4}: \emph{The map $\Evol_H$ is continuous.} 

Indeed, since $\Evol_H$ is the composition of the maps $$R(I,\n)\times R(I,\g)\to R(I,\n)\times R(I,\g), \ ([\alpha],[\beta])\mapsto ([f_{\alpha,\beta}],[\beta])$$
and $$R(I,\n)\times R(I,\g)\to AC_R(I,N)\times AC_R(I,G), \ ([\alpha],[\beta])\mapsto (\Evol_N(\alpha),\Evol_G(\beta))$$ by Claim~3, it suffices to show that the map $R(I,\n)\times R(I,\g)\to R(I,\n),([\alpha],[\beta])\mapsto [f_{\alpha,\beta}]$ is continuous, which follows from Claim~1.
\end{proof}

\begin{corollary}
The half-Lie group $H$ has a continuous exponential function 
$$\exp_H\co\n\times\g\to N\rtimes_{\pi}G, \ (v,x)\mapsto \big(\evol_N(f_{v,x}),\exp_G(x)\big),$$
where $$f_{v,x}\co I\to \n, \ t\mapsto \dot{\pi}(\exp_G(tx))v.$$ 
\end{corollary}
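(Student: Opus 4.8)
The plan is to derive the corollary directly from Theorem~\ref{thm:Rreg_Glinear} by specialising the evolution map to constant curves. Recall from \S\ref{section:RReg} that since $H$ is $R$-semiregular (in fact $R$-regular) by Theorem~\ref{thm:Rreg_Glinear}, it automatically possesses an exponential function given by $\exp_H(x) := \evol_H(c_x)$, where $c_x \co I \to \hh$ is the constant curve $t \mapsto x$ and $\evol_H(\gamma) = \Evol_H(\gamma)(1)$. So the only content of the corollary is to compute this explicitly.

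First I would fix $x = (v,w) \in \hh = \n \times \g$ and write the constant curve $c_x$ as the pair $([\alpha],[\beta])$ with $\alpha \co I \to \n$ the constant function $t \mapsto v$ and $\beta \co I \to \g$ the constant function $t \mapsto w$. Then Claim~3 in the proof of Theorem~\ref{thm:Rreg_Glinear} gives
\[
\Evol_H(c_x) = \big(\Evol_N(f_{\alpha,\beta}), \Evol_G(\beta)\big),
\]
where $f_{\alpha,\beta}(t) = \dot{\pi}(\Evol_G(\beta)(t))\alpha(t) = \dot{\pi}(\Evol_G(\beta)(t))v$. Since $\beta$ is the constant curve with value $w$, we have $\Evol_G(\beta) = \Evol_G(c_w)$, whose value at $t$ is $\exp_G(tw)$ by \eqref{eqn:exp_function} applied to the Banach--Lie group $G$ (more precisely, $\Evol_G(c_w)(t) = \exp_G(tw)$ because $t\mapsto \exp_G(tw)$ solves the relevant IVP $\delta(\eta) = c_w$, $\eta(0)=1_G$). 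Hence $f_{\alpha,\beta}(t) = \dot{\pi}(\exp_G(tw))v = f_{v,w}(t)$ in the notation of the corollary.

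Evaluating at $t = 1$ then yields
\[
\exp_H(v,w) = \Evol_H(c_{(v,w)})(1) = \big(\Evol_N(f_{v,w})(1), \Evol_G(c_w)(1)\big) = \big(\evol_N(f_{v,w}), \exp_G(w)\big),
\]
which is precisely the claimed formula. Continuity of $\exp_H$ follows from continuity of $\Evol_H$ (Theorem~\ref{thm:Rreg_Glinear}) together with continuity of the evaluation map $AC_R(I,H) \to H$, $\eta \mapsto \eta(1)$, and of the inclusion $\hh \hookrightarrow R(I,\hh)$, $x \mapsto c_x$; alternatively it can be read off the explicit formula, using continuity of $\evol_N$, of $\exp_G$, and of $(v,w) \mapsto f_{v,w}$ as a map $\hh \to R(I,\n)$, the last of which follows from Claim~1 (or Claim~4) in the proof of Theorem~\ref{thm:Rreg_Glinear}. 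There is no real obstacle here; the only point requiring a line of justification is the identification $\Evol_G(c_w)(t) = \exp_G(tw)$, which is the standard characterisation of the Banach--Lie group exponential recalled in \S\ref{section:RReg}.
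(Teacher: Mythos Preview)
Your proof is correct and follows exactly the approach the paper takes: the paper's proof is the one-line remark that the corollary ``readily follows from (\ref{eqn:exp_function}) together with Claim~3 in the proof of Theorem~\ref{thm:Rreg_Glinear}'', and your argument is precisely the unpacking of that sentence. Your added justification of continuity is more detailed than the paper's, but entirely in the same spirit.
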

\begin{proof}
This readily follows from (\ref{eqn:exp_function}) together with Claim~3 in the proof of Theorem~\ref{thm:Rreg_Glinear}.
\end{proof}

\begin{remark}
In the notation of (the proof of) Theorem~\ref{thm:Rreg_Glinear}, the map $\Evol_H\co R(I,\hh)\to AC_R(I,H)$ is $C^k$ in some neighbourhood of $([\alpha],[\beta])$ if and only if the map $R(I,\n)\times R(I,\g)\to R(I,\n),([\alpha],[\beta])\mapsto [f_{\alpha,\beta}]$ is $C^k$ in some neighbourhood of $([\alpha],[\beta])$ (this is because the logarithmic derivative in the Banach--Lie groups $G$ and $N$ is a smooth inverse for the evolution map, see \cite[5.29]{Gl15}). In particular, $\Evol_H$ is in general \emph{not} a smooth map. For instance, if $\alpha=c_v$ ($v\in\n$) and $\beta=c_x$ ($x\in\g$) are constant functions, then $f_{\alpha,\beta}=f_{v,x}\co I\to \n,t\mapsto \dot{\pi}(\exp_G(tx))v$ will in general only be $C^k$ in $x$ provided that $v$ is a $C^k$-vector for $\dot{\pi}$. 
\end{remark}

\subsection{Trotter formulas for \texorpdfstring{$H$}{H}}\label{section:TFFH}
Now that the $R$-regularity of $H$ is established, we investigate its consequences on the structure of the space $\Hom^1(\RR,H)$ of $C^1$ one-parameter subgroups of $H$, and its relations to the tangent space $\hh=T_1H$ of $H$ at $1_H$. We start with an easy observation, which follows from the existence of an exponential function for $H$.
\begin{lemma}
The map $$\Hom^1(\RR,H)\to\hh, \ \gamma\mapsto\gamma'(0):=T_0(\gamma)(1)$$ is bijective, with inverse given by $\hh\to \Hom^1(\RR,H), v\mapsto [t\mapsto\exp_H(tv)]$.
\end{lemma}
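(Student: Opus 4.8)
The plan is to exhibit the two maps and check they are mutually inverse, using only the existence of the (continuous) exponential function $\exp_H\co\hh\to H$ established in the corollary to Theorem~\ref{thm:Rreg_Glinear} and the uniqueness of $C^1$ one-parameter subgroups with prescribed derivative at $0$, which in turn is a consequence of the uniqueness of $AC_R$-Carath\'eodory solutions recorded in \S\ref{section:RReg} (or \cite[\S II.3]{NeJap}).

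First I would check that the prospective inverse map is well defined, i.e.\ that for each $v\in\hh$ the curve $\gamma_v\co\RR\to H$, $t\mapsto\exp_H(tv)$, is indeed a $C^1$ one-parameter subgroup of $H$ with $\gamma_v'(0)=v$. That $\gamma_v$ is a homomorphism follows from the defining property of the evolution map applied to constant functions: $t\mapsto\evol_H(c_{sv})$ and the reparametrisation identity $sx*tx=(s+t)x$ in the Banach--Lie groups $G$ and $N$ (via Claim~3 in the proof of Theorem~\ref{thm:Rreg_Glinear}, since $f_{sv,x}$ rescales correctly under $t\mapsto \lambda t$); concretely, $\exp_H((s+t)v)=\exp_H(sv)\exp_H(tv)$ because both coordinates satisfy the corresponding one-parameter subgroup property in $N$ and $G$. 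The $C^1$-regularity of $t\mapsto\exp_H(tv)$ and the identity $\gamma_v'(0)=v$ are exactly the defining requirements of an exponential function, already verified in the corollary. Hence $v\mapsto\gamma_v$ maps $\hh$ into $\Hom^1(\RR,H)$.

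Next I would verify that the two maps compose to the identity in both directions. For one composite: given $v\in\hh$, the curve $\gamma_v$ has $\gamma_v'(0)=v$, so $\gamma\mapsto\gamma'(0)$ sends $\gamma_v$ back to $v$; this is immediate. For the other composite: given $\gamma\in\Hom^1(\RR,H)$, set $v:=\gamma'(0)$; I must show $\gamma(t)=\exp_H(tv)$ for all $t$. Both $\gamma$ and $\gamma_v$ are $C^1$ curves $\RR\to H$ with value $1_H$ at $0$; since they are one-parameter subgroups, differentiating the homomorphism property at a general point $t$ gives $\gamma'(t)=\gamma(t).v$ and likewise $\gamma_v'(t)=\gamma_v(t).v$, so both are $AC_R$-Carath\'eodory solutions (indeed $C^1$ solutions) of the IVP $\eta'(t)=\eta(t).v$, $\eta(0)=1_H$, on $I=[0,1]$. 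By the uniqueness statement in \S\ref{section:RReg} they agree on $[0,1]$, and the homomorphism property propagates the equality to all of $\RR$. This shows the two maps are mutually inverse bijections.

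I do not expect a genuine obstacle here; the statement is essentially a bookkeeping consequence of $R$-semiregularity. The one point deserving care is the argument that $\gamma'(t)=\gamma(t).v$ for a $C^1$ one-parameter subgroup $\gamma$ of the \emph{half}-Lie group $H$: this uses smoothness of the left multiplication maps $\lambda_{\gamma(t)}$ (which holds by the definition of half-Lie group) together with the chain rule applied to $s\mapsto\gamma(t)\gamma(s)=\gamma(t+s)$, and it is exactly here that one must not invoke smoothness of right multiplications or of the multiplication map, which are unavailable in $H$. Once this is noted, uniqueness of solutions to the left-invariant IVP does the rest.
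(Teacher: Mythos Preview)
Your proposal is correct and follows essentially the same approach as the paper, which simply remarks that the lemma ``follows from the existence of an exponential function for $H$''; you have spelled out the details (well-definedness of the inverse via the defining property of $\exp_H$, and injectivity via uniqueness of solutions to the left-invariant IVP). Your care in noting that the chain-rule computation $\gamma'(t)=\gamma(t).v$ uses only smoothness of left multiplication is exactly the right observation for the half-Lie group setting.
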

Note that, since the multiplication in $H$ is only continuous, one cannot a priori define a Lie algebra structure on $\hh=\n\times\g$ using left invariant vector fields as in \S\ref{section:LG_NeJap}. Indeed, the Lie bracket $[(v_1,x_1),(v_2,x_2)]:=([v_1,v_2]+\dd\dot{\pi}(x_1)v_2-\dd\dot{\pi}(x_2)v_1,[x_1,x_2])$ in $\n\times\g$ only makes sense in general when $v_1,v_2$ are $C^1$-vectors for $\dot{\pi}$.

On the other hand, for a Lie group $\Gamma$ with Lie algebra $\Lie(\Gamma)$ and smooth exponential function $\exp_{\Gamma}\co \Lie(\Gamma)\to\Gamma$, there may be an alternative approach to describe the Lie algebra structure on $\Lie(\Gamma)$, namely when $\Gamma$ has the Trotter property (resp. commutator property): in this case, one may define a vector space structure (resp. a Lie bracket) on $\Hom^1(\RR,\Gamma)$ by the Trotter formula (resp. commutator formula), and then transport the obtained Lie algebra structure on $\Hom^1(\RR,\Gamma)$ using the identification $\Hom^1(\RR,\Gamma)\approx \Lie(\Gamma)$. We recall that $\Gamma$ is said to have the \emph{Trotter property} if for all $\gamma_1,\gamma_2\in\Hom^1(\RR,\Gamma)$,
\begin{equation}\label{eqn:TrF}
\lim_{n\to\infty}\big(\gamma_1(\tfrac{t}{n})\gamma_2(\tfrac{t}{n})\big)^n=\exp_{\Gamma}\big(t(\gamma_1'(0)+\gamma_2'(0))\big)
\end{equation}
uniformly in $t$ on compact subsets of $\RR$, and the \emph{commutator property} if for all $\gamma_1,\gamma_2\in\Hom^1(\RR,\Gamma)$,
\begin{equation}\label{eqn:CommF}
\lim_{n\to\infty}\big(\gamma_1(\tfrac{\sqrt{t}}{n})\gamma_2(\tfrac{\sqrt{t}}{n})\gamma_1(-\tfrac{\sqrt{t}}{n})\gamma_2(-\tfrac{\sqrt{t}}{n})\big)^{n^2}=\exp_{\Gamma}\big(t[\gamma_1'(0),\gamma_2'(0)]\big)
\end{equation}
uniformly in $t$ on compact subsets of $[0,\infty[$. Note that $\Gamma$ has both the Trotter and commutator properties as soon as $\Gamma$ has the \emph{strong Trotter property}, that is, as soon as for each $C^1$-curve $\gamma\co I\to \Gamma$ with $\gamma(0)=1_{\Gamma}$, 
\begin{equation}\label{eqn:STF}
\lim_{n\to\infty}\gamma(\tfrac{t}{n})^n=\exp_{\Gamma}(t\gamma'(0))
\end{equation}
uniformly in $t$ on compact subsets of $[0,\infty[$ (see \cite[Theorem~H]{Gl15}).

It is now natural to investigate which of the above properties are satisfied in our context, namely by the half-Lie group $H$. As suggested above, one cannot expect in general $H$ to have the commutator property without further assumption on the curves $\gamma_1,\gamma_2\in\Hom^1(\RR,H)$ involved in the commutator formula (\ref{eqn:CommF}). We illustrate this with the following example.

\begin{example}
Fix some $(\lambda_n)_{n\in\NN}\in\CC^{\NN}$. Consider the Hilbert space $V=\ell^2(\CC)$, together with the continuous linear $\RR$-action $$\pi\co \RR\to \GL(V), \ \pi(t)(x_n)_{n\in\NN}:= (e^{i\lambda_nt}x_n)_{n\in\NN}.$$ 
Then $V\rtimes_{\pi}\RR$ is a half-Lie group. Note that a vector $v=(x_n)_{n\in\NN}\in V$ is a $C^1$-vector for $\pi$ if and only if $\sum_{n\in\NN}{|\lambda_nx_n|^2}<\infty$. Consider now the smooth one-parameter subgroups $\gamma_1(t)=(tv,0)$ and $\gamma_2(t)=(0,t)$ of $V\rtimes_{\pi}\RR$, where $v=(x_n)_{n\in\NN}\in V$. Then
$$\gamma(t):=\gamma_1(t)\gamma_2(t)\gamma_1(t)\inv \gamma_2(t)\inv=\big(t\cdot ((1-e^{i\lambda_nt})x_n)_{n\in\NN},0\big),$$
and hence
$$\gamma(\sqrt{t}/m)^{m^2}=\big(m\sqrt{t}\cdot ((1-e^{i\lambda_n\sqrt{t}m\inv})x_n)_{n\in\NN},0\big).$$
Since for each $n\in\NN$,
$$\lim_{m\to\infty}{m\sqrt{t}\cdot (1-e^{i\lambda_n\sqrt{t}m\inv})x_n}=-i\lambda_nx_nt,$$
we deduce that $\lim_{m\to\infty}\gamma(\sqrt{t}/m)^{m^2}$ exists in $V$ if and only if $\sum_{n\in\NN}{|\lambda_nx_n|^2}<\infty$, that is, if and only if $v$ is a $C^1$-vector for $\pi$. In particular, if $(\lambda_n)_{n\in\NN}\notin\ell^{\infty}(\CC)$, then there exists a $v\in V\setminus V^1$, so that for the above choices of one-parameter subgroups $\gamma_1,\gamma_2$, the limit in (\ref{eqn:CommF}) does not exist.
\end{example}

As noticed in \cite[Theorem~I]{Gl15}, the $R$-regularity of a Fr\'echet--Lie group implies the strong Trotter formula. As it turns out, the proof of Theorem~I in \emph{loc. cit.} (see \cite[Section~12]{Gl15}) can be easily adapted in the setting of half-Lie groups. Combined with Theorem~\ref{thm:Rreg_Glinear}, this will then imply that $H$ has the strong Trotter property (\ref{eqn:STF}). 

\begin{prop}\label{prop:basiclimit}
Let $H$ be an $R$-regular half-Lie group. Then $H$ has the strong Trotter property: for all $\zeta\in C^1(I,H)$ with $\zeta(0)=1_H$, $$\lim_{n\to\infty}{\zeta(t/n)^n}=\exp_H(t\zeta'(0))$$
uniformly in $t$ on compact subsets of $[0,+ \infty[$.
\end{prop}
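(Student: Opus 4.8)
The plan is to adapt Gl\"ockner's argument for \cite[Theorem~I]{Gl15} to the half-Lie setting, using only the $R$-regularity of $H$ and the fact that left translations are smooth. Fix $\zeta\in C^1(I,H)$ with $\zeta(0)=1_H$ and set $x:=\zeta'(0)\in\hh$. For each $n\in\NN$ define the curve $\eta_n\co I\to H$ by subdividing $[0,1]$ into the intervals $[\tfrac{k}{n},\tfrac{k+1}{n}]$ and letting $\eta_n$ run, on the $k$-th interval, through a left translate of $t\mapsto\zeta(t-\tfrac kn)$ starting at the point $\zeta(\tfrac1n)^k$; concretely $\eta_n(t):=\zeta(\tfrac1n)^k\cdot\zeta(t-\tfrac kn)$ for $t\in[\tfrac kn,\tfrac{k+1}{n}]$. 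Since left multiplications are smooth this is a well-defined element of $AC_R(I,H)$ with $\eta_n(1)=\zeta(\tfrac1n)^n$, and the point is to compute its logarithmic derivative and show that $\delta(\eta_n)$ converges in $R(I,\hh)$ to the constant function $t\mapsto x$, which has evolution $t\mapsto\exp_H(tx)$.

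First I would compute $\delta(\eta_n)$. Because $\delta$ is unchanged under left translation (it is the \emph{left} logarithmic derivative), on $[\tfrac kn,\tfrac{k+1}{n}]$ we get $\delta(\eta_n)(t)=\delta(\zeta)(nt-k)$ after the obvious rescaling of the derivative, i.e.\ $\delta(\eta_n)(t)=n\cdot\delta_\zeta\big(n(t-\tfrac kn)\big)$ where $\delta_\zeta(s):=\zeta(s)\inv.\zeta'(s)$ is the (continuous, since $\zeta$ is $C^1$) logarithmic derivative of $\zeta$. Rescaling time by the homeomorphism $t\mapsto nt\bmod 1$ is exactly the kind of reparametrisation under which evolution is equivariant, so $\evol_H(\delta(\eta_n))=\eta_n(1)$ reduces the problem to understanding $\delta_\zeta$ near $0$. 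Since $\zeta$ is $C^1$ with $\zeta(0)=1_H$ and $\zeta'(0)=x$, one has $\delta_\zeta(0)=x$ and $\delta_\zeta$ is continuous, hence $s\mapsto\delta_\zeta(s/n)$ converges uniformly on $I$ to the constant $x$ as $n\to\infty$; combined with the $n$-periodic reassembly this shows $\delta(\eta_n)\to c_x$ in $R(I,\hh)$ (uniform convergence of regulated functions), where $c_x(t):=x$.

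Next I would invoke $R$-regularity of $H$: by Theorem~\ref{thm:Rreg_Glinear} the evolution map $\Evol_H\co R(I,\hh)\to AC_R(I,H)$ is continuous, and $\eta_n=\Evol_H(\delta(\eta_n))$ by uniqueness of solutions (here one needs $\eta_n(0)=1_H$, which holds). Hence $\eta_n\to\Evol_H(c_x)$ in $AC_R(I,H)$, and since the inclusion $AC_R(I,H)\hookrightarrow C(I,H)$ is continuous the convergence holds uniformly on $I$; evaluating at $t=1$ gives $\zeta(\tfrac1n)^n=\eta_n(1)\to\Evol_H(c_x)(1)=\exp_H(x)$. To obtain the statement with a parameter $t$ and uniformity on compact subsets of $[0,\infty[$, I would replace $\zeta$ by $\zeta_t\co s\mapsto\zeta(ts)$ (so $\zeta_t'(0)=tx$) and run the same argument, checking that the relevant estimates are uniform in $t$ over a compact interval; this uses that $(t,s)\mapsto\delta_\zeta(ts)$ is continuous, hence uniformly continuous on compacta, so the uniform-in-$t$ convergence $\delta(\eta_n^{(t)})\to c_{tx}$ in $R(I,\hh)$ follows, and then continuity of $\Evol_H$ together with continuity of $\hh\to H,\ v\mapsto\exp_H(v)=\Evol_H(c_v)(1)$ upgrades this to uniform convergence of $\zeta(t/n)^n\to\exp_H(tx)$ on compact $t$-sets.

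The main obstacle is purely bookkeeping rather than conceptual: one must verify carefully that the concatenated curves $\eta_n$ genuinely lie in $AC_R(I,H)$ (the partition is explicit, and on each subinterval $\eta_n$ is a left translate of a $C^1$ curve, so this is fine, but it must be checked that left translation maps $AC_R$ into $AC_R$ and intertwines logarithmic derivatives), and that the reparametrisation/periodicity manipulation of $\delta$ is valid in the $R$-regular framework of \S\ref{section:LD}--\S\ref{section:RReg}. The only place where being a half-Lie group rather than a Lie group matters is that we may not compose on the right or take inverses smoothly; but the whole argument is built from left translations and the abstract evolution map, so it goes through verbatim, which is precisely why Gl\"ockner's proof adapts.
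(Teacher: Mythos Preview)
Your approach is essentially the paper's, but there are two slips to fix.

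First, the formula for $\delta(\eta_n)$ is wrong. With your definition $\eta_n(t)=\zeta(\tfrac1n)^k\cdot\zeta(t-\tfrac kn)$ on $[\tfrac kn,\tfrac{k+1}{n}]$, left-invariance of the logarithmic derivative gives simply $\delta(\eta_n)(t)=\delta_\zeta(t-\tfrac kn)$, with no rescaling and no factor of $n$: the argument $t-\tfrac kn$ ranges over $[0,\tfrac1n]$, and continuity of $\delta_\zeta$ at $0$ with $\delta_\zeta(0)=x$ then yields $\delta(\eta_n)\to c_x$ uniformly, as you want. Your stated formula $n\cdot\delta_\zeta\big(n(t-\tfrac kn)\big)$ would \emph{not} converge (its argument ranges over all of $[0,1]$ and the prefactor $n$ blows up), so the error is not cosmetic. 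The paper builds the parameter $t$ into the construction from the start, setting $\alpha_{n,t}(s):=\zeta(t/n)^k\,\zeta((s-k/n)t)$, so that $\delta(\alpha_{n,t})(s)=t\,\delta_\zeta((s-k/n)t)$ and $\alpha_{n,t}(1)=\zeta(t/n)^n$; this is the correct way to get a factor of $t$ (not $n$) in the logarithmic derivative.

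Second, your treatment of uniformity in $t$ is too quick. Pointwise continuity of $\Evol_H$ does not by itself upgrade ``$\delta(\eta_n^{(t)})\to c_{tx}$ uniformly in $t$'' to ``$\eta_n^{(t)}(1)\to\exp_H(tx)$ uniformly in $t$''; one needs some equicontinuity of $\evol_H$ over a compact set of inputs. The paper makes this explicit (its Claim~1): the set $K=\{c_{t\zeta'(0)}:t\in[0,m]\}$ is compact in $\Rcal(I,\hh)$, the map $(\theta_1,\theta_2)\mapsto\evol(\theta_1)^{-1}\evol(\theta_2)$ is continuous (only the topological group structure is used here), and hence for any identity neighbourhood $U$ there is a $0$-neighbourhood $Q\subseteq\Rcal(I,\hh)$ with $\evol(\theta+Q)\subseteq\evol(\theta)U$ for all $\theta\in K$. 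That is the missing step that makes the convergence uniform in $t$. (Also, note that the proposition assumes $H$ is $R$-regular; you should invoke that hypothesis directly rather than Theorem~\ref{thm:Rreg_Glinear}, which only covers the semidirect-product case.)
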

\begin{proof}
Let $m\in\NN$. For each $n\geq m$, define
$$\zeta_n\co [0,m]\to H, \ t\mapsto (\zeta(t/n))^n.$$
We have to prove that 
$$\zeta_n(t)\stackrel{n\to\infty}{\longrightarrow}\exp_H(t\zeta'(0))\quad\textrm{uniformly in $t\in [0,m]$.}$$
In other words, fixing an open identity neighbourhood $U$ in $H$, we have to show (cf. Remark~\ref{remark:leftright_uniform} below) that
\begin{equation}\label{eqn:TP1}
(\exists n_0\geq m)(\forall n\geq n_0)(\forall t\in [0,m]) \quad \zeta_n(t)\in \exp_H(t\zeta'(0))U.
\end{equation}

Consider the map $\evol\co \mathcal R(I,\hh)\to H,\sigma\mapsto\Evol_H(\sigma)(1)$. For $v\in\hh$, consider also the constant curve
$$c_v\co I\to\hh, \ s\mapsto v.$$
Since the map $\hh\to \mathcal R(I,\hh),v\mapsto c_v$ is continuous, the set
$K:=\{c_{t\zeta'(0)} \ | \ t\in [0,m]\}$
is compact.

\smallskip
\noindent
$\bullet$ \underline{Claim 1}: \emph{There is an open $0$-neighbourhood $Q\subseteq \mathcal R(I,\hh)$ such that $\evol(\theta+Q)\subseteq \evol(\theta)U$ for every $\theta\in K$.}

Indeed, consider the continuous function $f\co \Rcal(I,\hh)\times\Rcal(I,\hh)\to H,(\theta_1,\theta_2)\mapsto \evol(\theta_1)\inv\evol(\theta_2)$. Since $f\inv(U)$ is open and contains the compact set $\Delta_K:=\{(\theta,\theta) \ | \ \theta\in K\}$, there exists some open $0$-neighbourhood $Q\subseteq \Rcal(I,\hh)$ such that $\Delta_K+(Q\times Q)\subseteq f\inv(U)$. 
In particular, $f(\Delta_K+(\{0\}\times Q))\subseteq U$, yielding the claim.

\smallskip

For $n\geq m$ and $t\in [0,m]$, we consider the continuous curve $\alpha_{n,t}\co I\to H$, defined piecewise by
$$\alpha_{n,t}(s):=\zeta(t/n)^k\zeta((s-k/n)t)\quad\textrm{for $s\in [k/n,(k+1)/n]$, $k=0,\dots,n-1$.}$$
Then $\alpha_{n,t}$ is piecewise $C^1$, so that $\alpha_{n,t}\in AC_{R}(I,H)$ with $\beta_{n,t}:=\delta(\alpha_{n,t})\in \mathcal R(I,\hh)$ (here we make a slight abuse of notation and identify the logarithmic derivative of a function with some representative of the equivalence class it defines). Moreover,
$$\zeta_{n}(t)=\alpha_{n,t}(1)=\evol(\beta_{n,t})$$ and
$$\beta_{n,t}(s)=t\delta(\zeta)((s-k/n)t)\quad\textrm{for $s\in ]k/n,(k+1)/n[$, $k=0,\dots,n-1$.}$$

\smallskip
\noindent
$\bullet$ \underline{Claim 2}: \emph{There exists some $n_0\geq m$ such that $\beta_{n,t}-c_{t\zeta'(0)}\in Q$ for all $n\geq n_0$ and $t\in [0,m]$.}

Indeed, let $\epsilon>0$ be such that $$\overline{B}_{\epsilon}(0):=\{\tau\in \mathcal R(I,\hh) \ | \ \norm{\tau}_{\mathcal L^{\infty}}\leq\epsilon\}\subseteq Q.$$
Since $\delta(\zeta)\co I\to\hh$ is continuous, there is some $\varepsilon\in ]0,1]$ such that 
$$\norm{\delta(\zeta)(x)-\delta(\zeta)(0)}_{\hh}\leq \frac{\epsilon}{m}\quad\textrm{for all $x\in [0,\varepsilon]$.}$$
Let $n_0\geq m$ be such that $\tfrac{m}{n_0}\leq\varepsilon$, and let $n\geq n_0$. Then 
$$(s-\tfrac{k}{n})t\leq\tfrac{m}{n_0}\leq \varepsilon\quad\textrm{for all $t\in [0,m]$ and $s\in [k/n,(k+1)/n]$, $k=0,\dots,n-1$.}$$
Hence
$$\beta_{n,t}(\cdot)=t\delta(\zeta)(0)+t\big(\delta(\zeta)((\cdot-k/n)t)-\delta(\zeta)(0)\big)\in t\delta(\zeta)(0)+t\overline{B}_{\epsilon/m}(0)\subseteq t\delta(\zeta)(0)+\overline{B}_{\epsilon}(0)$$
for all $n\geq n_0$ and $t\in [0,m]$. Since $\delta(\zeta)(0)=\zeta'(0)$, this implies that
$$\beta_{n,t}-c_{t\zeta'(0)}\in \overline{B}_{\epsilon}(0)\subseteq Q$$
for all $n\geq n_0$ and $t\in [0,m]$, as desired.

\smallskip

It follows from Claims 1 and 2 that for all $n\geq n_0$ and $t\in [0,m]$,
$$\zeta_n(t)=\evol(\beta_{n,t})\in \evol(c_{t\zeta'(0)}+Q)\subseteq \evol(c_{t\zeta'(0)})U=\exp_H(t\zeta'(0))U,$$
proving (\ref{eqn:TP1}).
\end{proof}

\begin{remark}\label{remark:leftright_uniform}
Given a topological group $\Gamma$ and continuous curves $\gamma_n,\gamma\co\RR\to\Gamma$ ($n\in\NN$), the following assertions are equivalent 
(see \cite[Lemma~A.5.21]{GlNe16}):
\begin{enumerate}
\item
$\lim_{n\to\infty}\gamma_n(t)\gamma(t)\inv=1_{\Gamma}$ uniformly in $t$ on compact subsets.
\item
$\lim_{n\to\infty}\gamma(t)\inv\gamma_n(t)=1_{\Gamma}$ uniformly in $t$ on compact subsets.
\item
$\gamma_n\to\gamma$ as $n\to\infty$ in the compact open topology.
\end{enumerate}
\end{remark}

\begin{corollary}\label{cor:STP}
Let $G,N$ be Banach--Lie groups, and let $\pi\co G\to \Aut(N)$ be a continuous action. Then $H:=N\rtimes_{\pi}G$ has the strong Trotter property.
\end{corollary}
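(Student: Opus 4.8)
The plan is to deduce Corollary~\ref{cor:STP} immediately from the two ingredients already in hand. By Theorem~\ref{thm:Rreg_Glinear}, the half-Lie group $H=N\rtimes_{\pi}G$ is $R$-regular, so it falls within the scope of Proposition~\ref{prop:basiclimit}. Applying that proposition directly yields that for every $C^1$-curve $\zeta\co I\to H$ with $\zeta(0)=1_H$ one has $\lim_{n\to\infty}\zeta(t/n)^n=\exp_H(t\zeta'(0))$ uniformly in $t$ on compact subsets of $[0,\infty[$, which is precisely the strong Trotter property (\ref{eqn:STF}) for $H$.

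Concretely, the proof is one line: \emph{By Theorem~\ref{thm:Rreg_Glinear}, $H$ is $R$-regular, so Proposition~\ref{prop:basiclimit} applies and shows that $H$ has the strong Trotter property.} There is no obstacle here, since all the work has been done in Theorem~\ref{thm:Rreg_Glinear} (the $R$-regularity) and in Proposition~\ref{prop:basiclimit} (the general implication $R$-regular $\Rightarrow$ strong Trotter for half-Lie groups). One may additionally remark that the exponential function $\exp_H$ appearing in the statement is the one produced by the $R$-semiregularity of $H$ via formula (\ref{eqn:exp_function}), i.e.\ the map described in the Corollary following Theorem~\ref{thm:Rreg_Glinear}, so there is no ambiguity in what $\exp_H(t\zeta'(0))$ means.

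Since the only thing to verify is that the hypotheses of Proposition~\ref{prop:basiclimit} are met, the single substantive step is the invocation of Theorem~\ref{thm:Rreg_Glinear}; the rest is bookkeeping. Thus:

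\begin{proof}
By Theorem~\ref{thm:Rreg_Glinear}, the half-Lie group $H=N\rtimes_{\pi}G$ is $R$-regular. The claim now follows from Proposition~\ref{prop:basiclimit}.
\end{proof}
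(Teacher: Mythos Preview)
Your proof is correct and essentially identical to the paper's own proof, which also simply invokes Theorem~\ref{thm:Rreg_Glinear} and Proposition~\ref{prop:basiclimit}.
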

\begin{proof}
This follows from Theorem~\ref{thm:Rreg_Glinear} and Proposition~\ref{prop:basiclimit}.
\end{proof}

In contrast to the case of Lie groups, given $\gamma_1,\gamma_2\in\Hom^1(\RR,H)$, one cannot immediately apply Corollary~\ref{cor:STP} to the curve $\gamma(t):=\gamma_1(t)\gamma_2(t)$ in order to deduce the Trotter formula (\ref{eqn:TrF}) in $H$, as $\gamma$ might not be $C^1$. Nevertheless, with some extra work, one can prove a strengthening of (\ref{eqn:TrF}) for $H$: namely, the formula (\ref{eqn:TrF}) holds in $H$ for any $C^1$-curves $\gamma_1,\gamma_2\co I\to H$ with $\gamma_1(0)=\gamma_2(0)=1_H$, under some additional mild condition on $(\gamma_1,\gamma_2)$ (which includes the case where $\gamma_1,\gamma_2\in\Hom^1(\RR,H)$). 

\begin{definition}
For $i=1,2$, let $\gamma_i=(\alpha_i,\beta_i)\co I\to H=N\rtimes_{\pi}G$ be a $C^1$-curve such that $\gamma_i(0)=1_H$. We say that the couple $(\gamma_1,\gamma_2)$ \emph{has the property $(\star)$} if
\begin{enumerate}
\item[($\star$)] There exists some $C^1$-curve $\alpha_3\co I\to N$ with $\alpha_3(0)=1_N$ such that the map $$h\co I\to N, \ t\mapsto \pi(\beta_1(t))\alpha_3(t)$$ is $C^1$ and such that $\alpha_3'(0)=h'(0)=\alpha_2'(0)$.
\end{enumerate}
\end{definition}

\begin{lemma}\label{lemma:property_star}
For $i=1,2$, let $\gamma_i=(\alpha_i,\beta_i)\co I\to H=N\rtimes_{\pi}G$ be a $C^1$-curve such that $\gamma_i(0)=1_H$. Then $(\gamma_1,\gamma_2)$ has the property $(\star)$ as soon as one of the following conditions holds:
\begin{enumerate}
\item
$\beta_1\co I\to G$ is a (local) one-parameter subgroup.
\item
$\alpha_2'(0)\in\n^1$.
\end{enumerate}
\end{lemma}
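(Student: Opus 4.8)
The plan is to construct in each case an explicit curve $\alpha_3$ witnessing property $(\star)$. The key point is that, by definition, property $(\star)$ asks for a $C^1$-curve $\alpha_3\co I\to N$ with $\alpha_3(0)=1_N$, $\alpha_3'(0)=\alpha_2'(0)$, for which the twisted curve $h(t):=\pi(\beta_1(t))\alpha_3(t)$ is again $C^1$ and satisfies $h'(0)=\alpha_2'(0)$.

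\emph{Case (1): $\beta_1$ is a (local) one-parameter subgroup of $G$.} Here the natural choice is $\alpha_3(t):=\pi(\beta_1(t))\inv\alpha_2(t)=\pi(\beta_1(-t))\alpha_2(t)$ (using $\beta_1(t)\inv=\beta_1(-t)$), so that tautologically $h(t)=\pi(\beta_1(t))\alpha_3(t)=\alpha_2(t)$, which is $C^1$ with $h'(0)=\alpha_2'(0)$ and $h(0)=1_N$. The only thing that requires an argument is that $\alpha_3$ itself is $C^1$. First I would note that $\beta_1$ being a $C^1$ one-parameter subgroup of the Banach--Lie group $G$ means $\beta_1(t)=\exp_G(tx_1)$ for $x_1=\beta_1'(0)\in\g$, which is a smooth curve in $G$. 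Then $\alpha_3(t)=\pi^{\alpha_2(t)}(\beta_1(-t))$ is, at first sight, only continuous because $\pi^{\wedge}$ is only continuous. To get $C^1$-ness I would instead write, locally in $t$ near a point $t_0$, using the exponential chart of $N$: since $\alpha_2$ is $C^1$ and $\alpha_2(t_0)\in N$, write $\alpha_2(t)=\alpha_2(t_0)\cdot\exp_N(v(t))$ with $v\co J\to\n$ a $C^1$-curve on a neighbourhood $J$ of $t_0$, $v(t_0)=0$. Then $\pi(\beta_1(-t))\alpha_2(t)=\pi(\beta_1(-t))\alpha_2(t_0)\cdot\pi(\beta_1(-t))\exp_N(v(t))$, and by naturality of $\exp_N$ together with Lemma~\ref{lemma:naturality_exp} the second factor equals $\exp_N(\dot\pi(\beta_1(-t))v(t))$; by Lemma~\ref{lemma:continuity_Tpi} and the smoothness of $t\mapsto\dd\dot\pi$ on $\g$-directions this is $C^1$ in $t$. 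For the first factor, $\pi(\beta_1(-t))\alpha_2(t_0)=\pi^{\alpha_2(t_0)}(\exp_G(-tx_1))$, and since $\alpha_2(t_0)$ is an arbitrary fixed element we cannot assume it is a $C^1$-vector, so this is where the argument is delicate. I would handle it by left-translating: write $\pi(\beta_1(-t))\alpha_2(t_0)=\pi(\beta_1(-t))\alpha_2(t_0)\pi(\beta_1(-t))\inv\cdot\pi(\beta_1(-t))$, i.e. conjugate, or more efficiently observe that $t\mapsto\pi(\beta_1(-t))n$ for \emph{fixed} $n\in N$ need only be shown $C^1$ when $n$ lies in the image of the relevant curve; but in fact since $\beta_1$ is a one-parameter subgroup, $t\mapsto\pi(\beta_1(t))n$ satisfies the cocycle relation $\pi(\beta_1(s+t))n=\pi(\beta_1(s))\big(\pi(\beta_1(t))n\big)$, so its differentiability at a single point $t=0$ would propagate; however we only know it is continuous. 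The clean fix, which I would adopt, is: by ($\star$) we are allowed to pick \emph{any} $\alpha_3$, so instead of $\alpha_3(t)=\pi(\beta_1(-t))\alpha_2(t)$ I take $\alpha_3(t):=\exp_N(t\alpha_2'(0))$, a smooth curve with the right value and derivative at $0$; then $h(t)=\pi(\beta_1(t))\exp_N(t\alpha_2'(0))=\exp_N\big(\dd\dot\pi\text{-flow applied}\big)$ — more precisely $h(t)=\pi(\exp_G(tx_1))\exp_N(t\alpha_2'(0))$, and by naturality $=\exp_N\big(\dot\pi(\exp_G(tx_1))(t\alpha_2'(0))\big)=\exp_N\big(t\,\dot\pi(\exp_G(tx_1))\alpha_2'(0)\big)$, which is the composition of the $C^1$ map $t\mapsto t\,\dot\pi^{\alpha_2'(0)}(\exp_G(tx_1))$ (its $C^1$-ness being exactly the statement that $\alpha_2'(0)$ is a $C^1$-vector — which we do \emph{not} have here!). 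So in Case (1) I must keep $\alpha_3(t)=\pi(\beta_1(-t))\alpha_2(t)$ and prove its $C^1$-ness directly; the honest route is the chart computation above, where the genuinely problematic term is $t\mapsto\pi(\exp_G(-tx_1))\alpha_2(t_0)$, and I resolve it by noting this equals $\pi(\exp_G(-tx_1))\alpha_2(t_0)=\big(\pi(\exp_G(-tx_1))\alpha_2(t)\big)\big(\pi(\exp_G(-tx_1))\exp_N(v(t))\big)\inv$ where the left-hand quantity is $\alpha_3(t)$ — circular. I therefore expect the actual proof to argue differently, likely: $\alpha_3=\pi^{-}(\beta_1)\cdot\alpha_2$ where $\pi^-(\beta_1)(t):=\pi(\beta_1(t))\inv\in\Aut(N)$, and one shows $t\mapsto\pi(\beta_1(t))\inv$ is a $C^1$-curve \emph{into the Banach--Lie group $\Aut(N)$} precisely because $\beta_1$ is a one-parameter subgroup (so this curve is $t\mapsto\pi(\beta_1(0))\inv\cdot(\text{one-parameter subgroup of }\Aut(N))$ type behaviour), hence acts $C^1$-ly on $N$ via the smooth action of $\Aut(N)$ on $N$. \textbf{This $\Aut(N)$-smoothness is the main obstacle} and the crux of Case (1).

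\emph{Case (2): $\alpha_2'(0)\in\n^1$.} Here I simply take $\alpha_3(t):=\exp_N\big(t\alpha_2'(0)\big)$, the smooth one-parameter subgroup of $N$ through the $C^1$-vector $\alpha_2'(0)$. Then $\alpha_3(0)=1_N$ and $\alpha_3'(0)=\alpha_2'(0)$. By naturality of the exponential functions and Lemma~\ref{lemma:naturality_exp},
$$h(t)=\pi(\beta_1(t))\exp_N(t\alpha_2'(0))=\exp_N\big(\dot\pi(\beta_1(t))(t\alpha_2'(0))\big)=\exp_N\big(t\cdot\dot\pi^{\alpha_2'(0)}(\beta_1(t))\big).$$
Since $\alpha_2'(0)\in\n^1$, the orbit map $\dot\pi^{\alpha_2'(0)}\co G\to\n$ is $C^1$, and $\beta_1\co I\to G$ is $C^1$, so $t\mapsto t\cdot\dot\pi^{\alpha_2'(0)}(\beta_1(t))$ is a $C^1$-curve in $\n$; composing with the smooth $\exp_N$ gives $h\in C^1(I,N)$. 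Finally $h(0)=\exp_N(0)=1_N$ and, differentiating at $t=0$ using the product rule, $h'(0)=\dot\pi^{\alpha_2'(0)}(\beta_1(0))+0=\dot\pi(1_G)\alpha_2'(0)=\alpha_2'(0)$. Thus $\alpha_3'(0)=h'(0)=\alpha_2'(0)$ and property $(\star)$ holds. Case (2) is routine once the naturality lemma is invoked; the only care needed is the product-rule computation of $h'(0)$, for which one uses that the first-order Taylor expansions of $t\mapsto t$ and $t\mapsto\dot\pi^{\alpha_2'(0)}(\beta_1(t))$ are $t+o(t)$ and $\alpha_2'(0)+O(t)$ respectively.

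\textbf{Summary of the main obstacle.} The substantive difficulty is entirely in Case (1): one must show that for a $C^1$ one-parameter subgroup $\beta_1$ of $G$, the curve $t\mapsto\pi(\beta_1(t))$ — a priori only continuous into $\Aut(N)$ — in fact acts $C^1$-ly on $N$ in the sense needed, so that $\alpha_3(t)=\pi(\beta_1(t))\inv\alpha_2(t)$ is $C^1$. I expect the proof to exploit that $t\mapsto\pi(\beta_1(t))$ is a continuous one-parameter subgroup of the Banach--Lie group $\Aut(N)$ and hence automatically $C^1$ (even real-analytic) — continuous one-parameter subgroups of Banach--Lie groups are smooth — after which $\pi(\beta_1(t))\inv$ is a smooth $\Aut(N)$-valued curve acting smoothly, hence $C^1$-ly, on $N$ through the (smooth) evaluation action $\Aut(N)\times N\to N$. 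That regularity upgrade for continuous homomorphisms $\RR\to\Aut(N)$ is the linchpin of Case (1).
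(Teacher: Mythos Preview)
Your treatment of Case~(2) is correct and coincides exactly with the paper's argument: take $\alpha_3(t)=\exp_N(t\alpha_2'(0))$ and use naturality of the exponential together with $\alpha_2'(0)\in\n^1$ to see that $h(t)=\exp_N\big(t\,\dot\pi(\beta_1(t))\alpha_2'(0)\big)$ is $C^1$ with the right derivative at~$0$.

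Your Case~(1), however, has a genuine gap precisely at the point you flag as the linchpin. You propose to argue that $t\mapsto\pi(\beta_1(t))$ is a continuous one-parameter subgroup of the Banach--Lie group $\Aut(N)$ and therefore automatically smooth. This fails for two reasons. First, for an infinite-dimensional Banach--Lie group $N$ the group $\Aut(N)$ need not carry any Banach--Lie group structure at all. Second, and more to the point, even when it does (say $N$ is a Banach space and $\Aut(N)=\GL(N)$ with the norm topology), the hypothesis of the paper is only that the \emph{action map} $G\times N\to N$ is continuous --- i.e.\ $\pi$ is strongly continuous --- which does \emph{not} imply that $g\mapsto\pi(g)$ is continuous for the norm (Banach--Lie) topology on $\Aut(N)$. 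So the ``continuous one-parameter subgroups of Banach--Lie groups are smooth'' principle simply does not apply to $t\mapsto\pi(\beta_1(t))$. Your first candidate $\alpha_3(t)=\pi(\beta_1(-t))\alpha_2(t)$ is therefore not known to be $C^1$, and your analysis correctly shows why the local-chart attempts run into the same wall: differentiability of $t\mapsto\pi(\beta_1(t))n$ for a \emph{fixed} $n$ is exactly the statement that $n\in N^1$, which is not assumed.

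The paper sidesteps this entirely by a different choice of $\alpha_3$. Since $\beta_1$ is a one-parameter subgroup, $\beta_1(t)=\exp_G(t\beta_1'(0))$, which is precisely the $G$-component of $\exp_H\big(t(\alpha_2'(0),\beta_1'(0))\big)$. So one may \emph{define} $\alpha_3$ by
\[
(\alpha_3(t),\beta_1(t)):=\exp_H\big(t(\alpha_2'(0),\beta_1'(0))\big),
\]
which is a $C^1$ one-parameter subgroup of $H$ (this uses the already-established existence of $\exp_H$); in particular $\alpha_3$ is $C^1$ with $\alpha_3(0)=1_N$ and $\alpha_3'(0)=\alpha_2'(0)$. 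Now the one-parameter subgroup property gives $(\alpha_3(2t),\beta_1(2t))=(\alpha_3(t),\beta_1(t))^2=(\alpha_3(t)\cdot\pi(\beta_1(t))\alpha_3(t),\beta_1(t)^2)$, hence
\[
h(t)=\pi(\beta_1(t))\alpha_3(t)=\alpha_3(t)^{-1}\alpha_3(2t),
\]
which is manifestly $C^1$ because $\alpha_3$ is $C^1$ and multiplication and inversion in the Banach--Lie group $N$ are smooth. Differentiating at $0$ yields $h'(0)=-\alpha_3'(0)+2\alpha_3'(0)=\alpha_2'(0)$. The trick is thus to manufacture $\alpha_3$ from $\exp_H$ rather than from $\alpha_2$, so that the problematic factor $\pi(\beta_1(t))$ never has to be differentiated on its own.
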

\begin{proof}
If (1) holds, we define $\alpha_3\co I\to N$ by $(\alpha_3(t),\beta_1(t))=\exp_H(t(\alpha_2'(0),\beta_1'(0)))$. Then for all $t\in I$,
$$(\alpha_3(2t),\beta_1(2t))=(\alpha_3(t),\beta_1(t))^2=(\alpha_3(t)\cdot\pi(\beta_1(t))\alpha_3(t),\beta_1(t)^2)$$
and hence $$\pi(\beta_1(t))\alpha_3(t)=\alpha_3(t)\inv\alpha_3(2t),$$ so that $\alpha_3$ indeed has all the desired properties.

If (2) holds, we define $\alpha_3\co I\to N$ by $\alpha_3(t)=\exp_N(t\alpha_2'(0))$. Then $$\pi(\beta_1(t))\alpha_3(t)=\exp_N(t\cdot \dot{\pi}(\beta_1(t))\alpha_2'(0))\quad\textrm{for all $t\in I$},$$ so that $\alpha_3$ also has all the desired properties in this case.
\end{proof}

\begin{theorem}\label{thm:TFimproved}
Let $G,N$ be Banach Lie groups and $\pi\co G\to \Aut(N)$ be a continuous action. Let $\gamma_1,\gamma_2\co I\to H$ be $C^1$-curves with $\gamma_1(0)=\gamma_2(0)=1_H$, and assume that $(\gamma_1,\gamma_2)$ has the property $(\star)$. Then
\begin{equation*}
\lim_{n\to\infty}{\big(\gamma_1(\tfrac{t}{n})\gamma_2(\tfrac{t}{n})\big)^n}=\exp_H\big(t(\gamma_1'(0)+\gamma_2'(0))\big)
\end{equation*}
uniformly in $t$ on compact subsets of $[0,\infty[$.
\end{theorem}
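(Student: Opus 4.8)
The plan is to reduce the Trotter formula for $H$ to the known strong Trotter property (Corollary~\ref{cor:STP}) by producing a single $C^1$-curve whose $n$-th power iterates agree, up to a harmless modification, with $\big(\gamma_1(\tfrac tn)\gamma_2(\tfrac tn)\big)^n$. Write $\gamma_i=(\alpha_i,\beta_i)$ and let $\alpha_3$ be the curve supplied by property $(\star)$, so that $h(t)=\pi(\beta_1(t))\alpha_3(t)$ is $C^1$ with $\alpha_3'(0)=h'(0)=\alpha_2'(0)$. The key observation is that in the semidirect product
\begin{equation*}
(\alpha_1(t),\beta_1(t))\cdot(\alpha_3(t),1_G)=(\alpha_1(t)\cdot\pi(\beta_1(t))\alpha_3(t),\beta_1(t))=(\alpha_1(t)h(t),\beta_1(t)),
\end{equation*}
which \emph{is} a $C^1$-curve in $H$ — call it $\zeta_1(t)$ — precisely because $h$ is $C^1$; its derivative at $0$ is $(\alpha_1'(0)+h'(0),\beta_1'(0))=(\alpha_1'(0)+\alpha_2'(0),\beta_1'(0))$. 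Meanwhile $\zeta_2(t):=(\alpha_3(t),1_G)^{-1}\gamma_2(t)=(\alpha_3(t)^{-1}\alpha_2(t),\beta_2(t))$ is a $C^1$-curve with $\zeta_2(0)=1_H$ and $\zeta_2'(0)=(\alpha_2'(0)-\alpha_3'(0),\beta_2'(0))=(0,\beta_2'(0))$. Then $\gamma_1(t)\gamma_2(t)=\zeta_1(t)\zeta_2(t)$, with the decisive feature that $\zeta_2$ takes values in the ``smooth part'' $\{1_N\}\times G$ shifted by a single fixed $C^1$ $N$-curve, so that products with $\zeta_2$ behave well.

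The next step is to telescope the product $\big(\zeta_1(\tfrac tn)\zeta_2(\tfrac tn)\big)^n$. Using the smoothness of left multiplications and the explicit semidirect-product formula, I would rewrite this $n$-fold product and extract, for the $G$-component, exactly $\big(\beta_1(\tfrac tn)\beta_2(\tfrac tn)\big)^n$, which converges to $\exp_G(t(\beta_1'(0)+\beta_2'(0)))$ by the Trotter property of the Banach--Lie group $G$ (indeed $G$ has the strong Trotter property, hence the Trotter property, by Lemma~\ref{lemma:Banach_R_regular} and \cite[Theorem~H]{Gl15}). For the $N$-component one obtains a product of terms of the form $\pi(g_{n,k}(t))\big(\text{increments of }\alpha_1 h\text{ and }\alpha_3^{-1}\alpha_2\big)$, where $g_{n,k}(t)$ are partial products of $\beta$-values that, by the $G$-Trotter convergence and equicontinuity on compacta, stay in a fixed compact subset of $G$ and converge uniformly. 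Here I would invoke the $R$-regularity machinery of Theorem~\ref{thm:Rreg_Glinear}: realize $\big(\zeta_1(\tfrac tn)\zeta_2(\tfrac tn)\big)^n$ as $\Evol_H$ evaluated at an explicit piecewise-defined regulated curve $\beta_{n,t}\in R(I,\hh)$ — exactly as in the proof of Proposition~\ref{prop:basiclimit}, but now with the two curves $\zeta_1,\zeta_2$ alternating on subintervals — and show that $\beta_{n,t}$ converges, uniformly in $t$ on compacta, in $R(I,\hh)$ to the constant curve $c_{t(\gamma_1'(0)+\gamma_2'(0))}$. Continuity of $\Evol_H$ (Theorem~\ref{thm:Rreg_Glinear}) then gives the result, together with Remark~\ref{remark:leftright_uniform} to pass between the various notions of uniform convergence.

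Concretely I would define, for $n\in\NN$ and $t$ in a fixed compact interval $[0,m]$, the curve $\alpha_{n,t}\in AC_R(I,H)$ piecewise on the $2n$ subintervals $[\tfrac{2k}{2n},\tfrac{2k+1}{2n}]$ and $[\tfrac{2k+1}{2n},\tfrac{2k+2}{2n}]$ by left-translating $\zeta_1(2ns\cdot\tfrac tn)$ resp.\ $\zeta_2(\cdots)$ by the appropriate partial product, so that $\alpha_{n,t}(1)=\big(\zeta_1(\tfrac tn)\zeta_2(\tfrac tn)\big)^n=\big(\gamma_1(\tfrac tn)\gamma_2(\tfrac tn)\big)^n$. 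Its logarithmic derivative $\beta_{n,t}=\delta(\alpha_{n,t})$ is, on each subinterval, essentially $t\,\delta(\zeta_i)$ evaluated near $0$, which is within $\epsilon$ of $t\,\zeta_i'(0)$ for $n$ large; combining the two alternating pieces, the running-average effect forces $\beta_{n,t}\to c_{t(\zeta_1'(0)+\zeta_2'(0))}=c_{t(\gamma_1'(0)+\gamma_2'(0))}$ in $R(I,\hh)$, uniformly in $t\in[0,m]$, and $\evol_H(c_{t(\gamma_1'(0)+\gamma_2'(0))})=\exp_H(t(\gamma_1'(0)+\gamma_2'(0)))$. I expect the main obstacle to be the bookkeeping showing that the partial $G$-products $g_{n,k}(t)$ remain in one compact set uniformly in $n$ and $t$ and feed correctly into the $N$-component estimate — this is where property $(\star)$ is genuinely used, since without the $C^1$ curve $\alpha_3$ absorbing $\alpha_2'(0)$ into the $\beta_1$-twisted factor $h$, the curve $\zeta_1$ would fail to be $C^1$ and the logarithmic derivative $\delta(\zeta_1)$ would not be continuous, breaking the convergence $\beta_{n,t}\to c_{t(\gamma_1'(0)+\gamma_2'(0))}$.
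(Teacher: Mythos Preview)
Your decomposition $\gamma_1\gamma_2=\zeta_1\zeta_2$ with both $\zeta_i$ of class $C^1$ is correct and is morally the same first move as in the paper (the paper's curve $c_2(t)=(\alpha_1(t)h(t),\beta_1(t)\beta_2(t))$ is exactly your $\zeta_1$ with the $G$-component completed by $\beta_2$). The gap is in the second step. You propose to realise $(\zeta_1(\tfrac tn)\zeta_2(\tfrac tn))^n$ as $\evol_H(\beta_{n,t})$ with $\beta_{n,t}=\delta(\alpha_{n,t})$ obtained by alternating $\zeta_1$ and $\zeta_2$ on the $2n$ half-intervals, and then to show $\beta_{n,t}\to c_{t(\zeta_1'(0)+\zeta_2'(0))}$ in $R(I,\hh)$. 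But on $\big[\tfrac{2k}{2n},\tfrac{2k+1}{2n}\big]$ the function $\beta_{n,t}$ is (for large $n$) close to the constant $2t\,\zeta_1'(0)$, and on the next half-interval close to $2t\,\zeta_2'(0)$. In the $\LL^\infty$-topology that defines $R(I,\hh)$, this alternating sequence does \emph{not} converge to the constant $t(\zeta_1'(0)+\zeta_2'(0))$ unless $\zeta_1'(0)=\zeta_2'(0)$; your ``running-average effect'' is a weak (or $\LL^1$-weak) phenomenon, and $\Evol_H$ is only known to be continuous for the sup-norm topology on $R(I,\hh)$ (Theorem~\ref{thm:Rreg_Glinear}). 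So the appeal to continuity of $\Evol_H$ fails, and with it the argument.

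The paper circumvents this by \emph{not} trying to push two alternating curves through $\Evol_H$. Instead it applies the strong Trotter property (Corollary~\ref{cor:STP}) once, to the single $C^1$-curve $c_2$ with $c_2'(0)=\gamma_1'(0)+\gamma_2'(0)$, obtaining $c_2(t/n)^n\to\exp_H(t(\gamma_1'(0)+\gamma_2'(0)))$. It then proves by direct hard analysis in the Banach--Lie group $N$ that the correction $c_1(t/n)^{-n}c_2(t/n)^n\to 1_H$ uniformly: one writes this $N$-component as an $n$-fold product of conjugates of the ``error'' $x(t)=x_2(t)x_1(t)^{-1}$, shows $\|\exp_V^{-1}(x(t/n))\|=o(1/n)$ using property~$(\star)$, and controls the product via a Taylor estimate $\|a*b\|\le\|a+b\|+C(\|a\|^2+\|b\|^2)/2$ for the local multiplication together with compactness of the relevant $G$- and $N$-orbits. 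In short, the work you anticipated as ``bookkeeping'' is not the obstacle; the obstacle is that the $R$-regularity route via alternating logarithmic derivatives cannot close in the available topology, and one must instead compare directly with a single $C^1$ replacement curve and estimate the discrepancy by hand.
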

\begin{proof}
We fix some left invariant metric $\dist$ on $G$ compatible with the topology, and we normalise the norm on $\n$ so that $\norm{[x,y]}\leq \norm{x}\cdot\norm{y}$ for all $x,y\in\n$.
We also fix some open connected symmetric $0$-neighbourhood $V$ in $\n$ such that $\exp_{V}:=\exp_N|_{V}$ is a diffeomorphism onto the open identity neighbourhood $U:=\exp_N(V)\subseteq N$. Finally, we fix some convex open $0$-neighbourhood $V_1\subseteq V$ in $\n$ such that $\exp_V(V_1)^2\subseteq U$, so that the local multiplication $x*y:=\exp_V\inv(\exp_V(x)\exp_V(y))$ is defined for all $x,y\in V_1$.

Given a curve $\alpha\co [0,r]\to U$ for some $r\in\RR_+$, we put $\overline{\alpha}:=\exp_V\inv\circ\alpha\co [0,r]\to V$. Note that $\alpha$ is continous (resp. $C^1$) if and only if $\overline{\alpha}$ is continuous (resp. $C^1$). We also recall that 
$$\pi(g)\exp_N(v)=\exp_N(\dot{\pi}(g)v)\quad\forall g\in G, \ v\in\n.$$

For $i=1,2$, write $\gamma_i(t)=(\alpha_i(t),\beta_i(t))$ for some $C^1$-curves $\alpha_i\co I\to N$ and $\beta_i\co I\to G$.
By assumption, there exists some $C^1$-curve $\alpha_3\co I\to N$ such that the map $h\co I\to N,t\mapsto \pi(\beta_1(t))\alpha_3(t)$ is $C^1$ and such that $\alpha_3(0)=1_N$ and $\alpha_3'(0)=h'(0)=\alpha_2'(0)$.

Consider the continuous curves $c_1,c_2\co I\to H$, $x_1,x_2\co I\to N$ and $y\co I\to G$ defined by
$$c_1(t):=(x_1(t),y(t)):=\gamma_1(t)\gamma_2(t)=(\alpha_1(t)\cdot \pi(\beta_1(t))\alpha_2(t),\beta_1(t)\beta_2(t))$$
and
$$c_2(t):=(x_2(t),y(t)):=(\alpha_1(t)\cdot \pi(\beta_1(t))\alpha_3(t),\beta_1(t)\beta_2(t)).$$
Thus $c_2$ is a $C^1$-curve with $c_2'(0)=(\alpha_1'(0)+\alpha_2'(0),\beta_1'(0)+\beta_2'(0))=\gamma_1'(0)+\gamma_2'(0)$. Corollary~\ref{cor:STP} then implies that 
\begin{equation}\label{eqn:c2_Trotter}
\lim_{n\to\infty}{c_2(t/n)^n}=\exp_H\big(tc_2'(0)\big)=\exp_H\big(t(\gamma_1'(0)+\gamma_2'(0))\big)
\end{equation}
uniformly in $t$ on compact subsets of $[0,\infty[$. It thus remains to show that
\begin{equation*}
\lim_{n\to\infty}{c_1(t/n)^{-n}\cdot c_2(t/n)^n}=1_H
\end{equation*}
uniformly in $t$ on compact subsets of $[0,\infty[$.

Consider the continuous curves $x\co I\to N$ and $z_s\co I\to N$ for each $s\in\NN$, defined for $t\in I$ by
$$x(t):=x_2(t)x_1(t)\inv\quad\textrm{and}\quad (z_s(t),y(t)^s):=c_2(t)^s=(x_2(t),y(t))^s=\Big(\prod_{i=0}^{s-1}{\pi(y(t)^{i})x_2(t)},y(t)^s\Big).$$

One then easily computes that
$$c_1^{-n}=(x_1,y)^{-n}=\Big(\prod_{s=1}^{n}{\pi(y^{-s})x_1\inv},y^{-n}\Big)\quad\textrm{and}\quad c_2^n=(x_2,y)^n=\Big(\pi(y)^n\prod_{s=n}^{1}{\pi(y^{-s})x_2},y^n\Big),$$
so that
$$c_1^{-n}c_2^n=\bigg(\Big(\prod_{s=1}^{n}{\pi(y^{-s})x_1\inv}\Big)\cdot \Big(\prod_{s=n}^{1}{\pi(y^{-s})x_2}\Big),1_G\bigg)=\bigg(\prod_{s=1}^{n}{\pi(y^{-s})(z_s\inv\cdot x\cdot z_s)},1_G\bigg).$$
Indeed, setting $\Pi^{L}_n:=\prod_{s=1}^{n}{\pi(y^{-s})x_1\inv}$, $\Pi^{R}_n:=\prod_{s=n}^{1}{\pi(y^{-s})x_2}$ and $\Pi^{LR}_n:=\prod_{s=1}^{n}{\pi(y^{-s})(z_s\inv\cdot x\cdot z_s)}$, we deduce from the equalities $$\Pi^{R}_n=\pi(y^{-n})z_n\quad\textrm{and}\quad \pi(y^{-n})z_n\cdot \pi(y^{-(n+1)})z_{n+1}\inv=\pi(y^{-(n+1)})x_2\inv$$
that $$\Pi^{L}_n\cdot\Pi^{R}_n\cdot \pi(y^{-(n+1)})(z_{n+1}\inv x z_{n+1})=\Pi^{L}_n\cdot \pi(y^{-(n+1)})x_1\inv\cdot \pi(y^{-(n+1)})z_{n+1}=\Pi^{L}_{n+1}\cdot\Pi^{R}_{n+1},$$
so that the claim follows by induction on $n$.

Fixing some $R\in\RR_+$, we thus have to show that 
$$F(n,t):=\prod_{s=1}^{n}{\pi(y(\tfrac{t}{n})^{-s})(z_s(\tfrac{t}{n})\inv\cdot x(\tfrac{t}{n})\cdot z_s(\tfrac{t}{n}))}$$
converges uniformly for $t\in[0,R]$ to $1_N$ as $n\to\infty$. For each $s\in\{1,\dots,n\}$, we set
$$F_s(n,t):=\pi(y(\tfrac{t}{n})^{-s})(z_s(\tfrac{t}{n})\inv\cdot x(\tfrac{t}{n})\cdot z_s(\tfrac{t}{n})).$$

Since the local multiplication $f\co V_1\times V_1\to V,(x,y)\mapsto x*y$ is smooth, its second differential 
$$d^2f\co (V_1\times V_1)\times (\n\times\n)\times (\n\times\n)\to\n$$
is continous, where $\n\times\n$ is endowed with the norm $(x,y)\mapsto \norm{x}+\norm{y}$. Hence there exists some $r>0$ with $\overline{B}^{\n}_{r}(0)\subseteq V_1$ such that
$$d^2f((\overline{B}^{\n\times\n}_{r}(0))^3)\subseteq \overline{B}_1^{\n}(0),$$
where $\overline{B}^X_r(0)$ denotes the closed ball centered at $0$ and of radius $r$ in the metric space $X$. In particular, the continuous bilinear map $d^2f(x,y,\cdot)\co (\n\times\n)^2\to\n$ has operator norm
\begin{equation}
\norm{d^2f(x,y,\cdot)}_{op}\leq \frac{1}{r^2}\quad\textrm{for all $x,y\in V_1$ with $\norm{x},\norm{y}\leq r$.}
\end{equation}

\smallskip
\noindent
$\bullet$ \underline{Claim 1.1}: \emph{There is a constant $C=C(r)$ such that $$\norm{x*y}\leq \norm{x+y}+C\tfrac{\norm{x}^2+\norm{y}^2}{2}\quad\textrm{for any $x,y\in V_1$ with $\norm{x},\norm{y}\leq r$.}$$ }
\\
Indeed, the first order Taylor expansion of $f$ around $(0,0)$ with remainder term (see \cite[Prop. I.2.3(v)]{NeJap}) yields
\begin{equation*}
\begin{aligned}
\norm{x*y-x-y}&\leq \int_0^1{(1-t)\norm{d^2f((tx,ty),(x,y),(x,y))}dt}\\
&\leq \int_0^1{(1-t)\norm{d^2f(tx,ty,\cdot)}_{op}(\norm{x}+\norm{y})^2dt}\\
&\leq \frac{\norm{x}^2+\norm{y}^2}{r^2}.
\end{aligned}
\end{equation*}

\smallskip
\noindent
$\bullet$ \underline{Claim 1.2}: \emph{For each $n\in\NN$ and $\rho>0$, set $a_n(\rho):=\frac{1}{C}((C\rho+1)^n-1)$. Then $$a_{2n}(\rho)=2a_n(\rho)+Ca_n(\rho)^2\quad\textrm{and}\quad a_n(\rho/n)\leq a(\rho):=\frac{1}{C}(e^{C\rho}-1)\quad\textrm{for all $n\in\NN$ and $\rho>0$.}$$}
\\
Indeed, the equality is an easy computation, while the inequality follows from the fact that the sequence $(1+C\rho/n)^n$ is increasing and converging to $e^{C\rho}$.

\smallskip
\noindent
$\bullet$ \underline{Claim 1.3}: \emph{Let $n=2^k$ for some $k\in\NN$. Then for all $\rho>0$ with $a(\rho)\leq r$ and all $x_1,\dots,x_{n}\in V_1$ with $\norm{x_i}\leq\rho/n$ ($i=1,\dots,n$), the product $x_1*x_2*\dots *x_{n}$ is defined and we have $$\norm{x_1*x_2*\dots *x_{n}}\leq a_n(\rho/n).$$}
\\
Indeed, we prove the claim by induction on $k$. If $k=0$, there is nothing to prove. Assume now that the claim is true for $n=2^k$, and let us prove it for $2n$. Let thus $x_1,\dots,x_{2n}\in V_1$ with $\norm{x_i}\leq \tfrac{\rho}{2n}$ for all $i$. Note first that by induction hypothesis and Claim~1.2, we have $$\norm{x_1*\dots *x_{n}}\leq a_n(\rho/n)\leq a(\rho)\leq r$$ and similarly $\norm{x_{n+1}*\dots *x_{2n}}\leq r$, so that $x_1*\dots *x_{n}$ and $x_{n+1}*\dots *x_{2n}$ belong to $V_1$. Hence their product $f(x_1*\dots *x_{n},x_{n+1}*\dots *x_{2n})$ is defined. Moreover, the induction hypothesis (with $\rho/2$ instead of $\rho$) and Claim~1.1 imply that
\begin{equation*}
\begin{aligned}
\norm{x_1*\dots *x_{2n}}&\leq \norm{x_1*\dots *x_{n}}+\norm{x_{n+1}*\dots *x_{2n}}+C\frac{\norm{x_1*\dots *x_{n}}^2+\norm{x_{n+1}*\dots *x_{2n}}^2}{2}\\
&\leq 2a_n(\tfrac{\rho}{2n})+Ca_n(\tfrac{\rho}{2n})^2=a_{2n}(\tfrac{\rho}{2n}),
\end{aligned}
\end{equation*}
where the last equality follows from Claim~1.2.

\smallskip
\noindent
$\bullet$ \underline{Claim 1.4}: \emph{Let $x,y\in\n$. Then $\exp_N(x)\exp_N(y)\exp_N(x)\inv=\exp_N(e^{\ad x}y)$ and $\norm{e^{\ad x}y}\leq e^{\norm{x}}\norm{y}$.}
\\
Indeed, by naturality of the exponential map, we have (see \cite[(2.5.4) p.340]{NeJap})
\begin{equation*}
\exp_N(x)\exp_N(y)\exp_N(x)\inv =\exp_N\circ \mathrm{Ad}(\exp_N(x))(y)=\exp_N(e^{\ad x}y).
\end{equation*}
Moreover, since $\norm{[x,z]}\leq \norm{x}\cdot\norm{z}$ for all $z\in\n$, so that $\ad x$ has operator norm $\norm{\ad x}\leq \norm{x}$, we have  
$$\norm{e^{\ad x}y}\leq \norm{e^{\ad x}}\cdot\norm{y} = e^{\norm{\ad x}}\norm{y}\leq e^{\norm{x}}\norm{y}.$$

\smallskip
\noindent
$\bullet$ \underline{Claim 2.1}: \emph{Let $\epsilon_1>0$. Then there is some $N_1\in\NN$ such that the curve $\overline{x}:=\exp_V\inv\circ x$ satisfies $$x(\tfrac{t}{n})\in U\quad\textrm{and}\quad\norm{\overline{x}(\tfrac{t}{n})}\leq \epsilon_1/n\quad\textrm{for all $t\in [0,R]$ and $n\geq N_1$.}$$}
\\
Indeed, since $x(t)=\alpha_1(t)\cdot \pi(\beta_1(t))(\alpha_3(t)\alpha_2(t)\inv)\cdot \alpha_1(t)\inv$ is continuous, certainly $x(\tfrac{t}{n})\in U$ for all large enough $n$ (with $t\in [0,R]$). We may thus define $\overline{x}(\tfrac{t}{n})=\exp_V\inv(x(\tfrac{t}{n}))$ for $n$ large enough. 

Let $\epsilon_1'>0$. Recall that $\alpha_2'(0)=\alpha_3'(0)$. Hence there is some $N_1'\in\NN$ such that for all $t\in [0,R]$ and $n\geq N_1'$, we have
$$\Bignorm{\frac{\overline{\alpha}_3(\tfrac{t}{n})-\overline{\alpha}_2(\tfrac{t}{n})}{t/n}}\leq \epsilon_1',\quad \Bignorm{\frac{\overline{\alpha}_2(\tfrac{t}{n}))}{t/n}}^2\leq \norm{\alpha_2'(0)}^2+\epsilon_1'\quad \textrm{and}\quad \Bignorm{\frac{\overline{\alpha}_3(\tfrac{t}{n}))}{t/n}}^2\leq \norm{\alpha_2'(0)}^2+\epsilon_1'.$$ 
Up to increasing $N_1'$, we may further assume that $\frac{CR(\norm{\alpha_2'(0)}^2+\epsilon_1')}{n}\leq \epsilon_1'$ for all $n\geq N_1'$.
It then follows from Claim~1.1 that for all $t\in [0,R]$ and $n\geq N_1'$,
\begin{equation*}
\begin{aligned}
\norm{\exp_V\inv(\alpha_3(\tfrac{t}{n})\alpha_2(\tfrac{t}{n})\inv)}&=\norm{\overline{\alpha}_3(\tfrac{t}{n})*(-\overline{\alpha}_2(\tfrac{t}{n}))}\\
&\leq \norm{\overline{\alpha}_3(\tfrac{t}{n})-\overline{\alpha}_2(\tfrac{t}{n})}+C\frac{\norm{\overline{\alpha}_3(\tfrac{t}{n})}^2+\norm{\overline{\alpha}_2(\tfrac{t}{n})}^2}{2}\\
&\leq \frac{R\epsilon_1'}{n}+\frac{CR^2(\norm{\alpha_2'(0)}^2+\epsilon_1')}{n^2}\leq \frac{2R\epsilon_1'}{n}.
\end{aligned}
\end{equation*}

Let $\epsilon_1''>0$. Since the linear action $G\times\n\to\n,(g,v)\mapsto \dot{\pi}(g)v$ is continuous by Lemma~\ref{lemma:continuity_Tpi}, we may choose $\epsilon_1'$ small enough so that 
$$\norm{\dot{\pi}(g)v}\leq \epsilon_1''\quad\textrm{for all $g\in G$ with $\dist(g,1_G)\leq\epsilon_1'$ and all $v\in\n$ with $\norm{v}\leq 2R\epsilon_1'$.}$$
Moreover, since $\beta_1$ is continuous, there is some $N_1''\geq N_1'$ such that $\dist(\beta_1(\tfrac{t}{n}),1_G)\leq \epsilon_1'$ for all $t\in [0,R]$ and $n\geq N_1''$.
Hence for all $t\in [0,R]$ and $n\geq N_1''$,
\begin{equation*}
\begin{aligned}
\norm{n\cdot \exp_V\inv(\pi(\beta_1(\tfrac{t}{n}))(\alpha_3(\tfrac{t}{n})\alpha_2(\tfrac{t}{n})\inv))}=\norm{\dot{\pi}(\beta_1(\tfrac{t}{n}))(n\cdot\exp_V\inv(\alpha_3(\tfrac{t}{n})\alpha_2(\tfrac{t}{n})\inv))}\leq\epsilon_1''.
\end{aligned}
\end{equation*}

Finally, we choose $\epsilon_1''$ so that $\epsilon_1''\cdot e^{\epsilon_1''}\leq\epsilon_1$.
Since $\alpha_1$ is continuous, there is some $N_1\geq N_1''$ such that $\norm{\overline{\alpha}_1(t/n)}\leq \epsilon_1''$ for all $t\in [0,R]$ and $n\geq N_1$.
It then follows from Claim~1.4 that for all $t\in [0,R]$ and $n\geq N_1$,
\begin{equation*}
\begin{aligned}
\norm{n\overline{x}(\tfrac{t}{n})}&=\norm{n\cdot \exp_V\inv\big(\alpha_1(\tfrac{t}{n})\cdot\pi(\beta_1(\tfrac{t}{n}))(\alpha_3(\tfrac{t}{n})\alpha_2(\tfrac{t}{n})\inv)\cdot \alpha_1(\tfrac{t}{n})\inv\big)}\\
&=\norm{e^{\ad \overline{\alpha}_1(t/n)}(n\cdot \exp_V\inv(\pi(\beta_1(\tfrac{t}{n}))(\alpha_3(\tfrac{t}{n})\alpha_2(\tfrac{t}{n})\inv)))}\leq\epsilon_1''\cdot e^{\epsilon_1''}\leq\epsilon_1.
\end{aligned}
\end{equation*}
This concludes the proof of Claim~2.1.

\smallskip
\noindent
$\bullet$ \underline{Claim 2.2}: \emph{Let $\epsilon_2>0$. Then there is some $N_2\in\NN$ such that for all $n\geq N_2$, $s\in\{0,\dots,n\}$ and $t\in [0,R]$, $$z_s(\tfrac{t}{n})\inv\cdot x(\tfrac{t}{n})\cdot z_s(\tfrac{t}{n})\in U\quad\textrm{and}\quad\norm{\exp_V\inv\big(z_s(\tfrac{t}{n})\inv\cdot x(\tfrac{t}{n})\cdot z_s(\tfrac{t}{n})\big)}\leq \epsilon_2/n.$$}
\\
Indeed, let $\epsilon_2'>0$. Define the continuous curve $z\co \RR\to N$ by $$(z(t),\exp_G(t(\beta_1'(0)+\beta_2'(0))))=\exp_H(tc_2'(0))\quad\textrm{for all $t\in I$}.$$
Then (\ref{eqn:c2_Trotter}) implies that $z_n(\tfrac{t}{n})\stackrel{n\to\infty}{\longrightarrow}z(t)$ uniformly for $t\in [0,R]$. 
Hence there exists some $N_2'\in\NN$ (depending on $\epsilon_2'$) such that $$z_n(\tfrac{t}{n})z(t)\inv\in U\quad\textrm{and}\quad \norm{\exp_V\inv(z_n(\tfrac{t}{n})z(t)\inv)}\leq\epsilon_2'\quad\textrm{for all $t\in [0,R]$ and $n\geq N_2'$.}$$
In particular, 
\begin{equation}\label{eqn:zzs}
\norm{\exp_V\inv(z_s(\tfrac{t}{n})z(t)\inv)}=\norm{\exp_V\inv(z_s(\tfrac{st/n}{s})z(t)\inv)}\leq\epsilon_2'\quad\textrm{for all $t\in [0,R]$ and $n\geq s\geq N_2'$.}
\end{equation}
Note also that by Claim~2.1, there is some $N_1\geq N_2'$ such that 
\begin{equation}\label{eqn:claim21}
x(\tfrac{t}{n})\in U\quad\textrm{and}\quad\norm{\overline{x}(\tfrac{t}{n})}\leq \epsilon_2'/n\quad\textrm{for all $t\in [0,R]$ and $n\geq N_1$.}
\end{equation}

Consider the compact subset $K:=z([0,R])\inv$ of $N$, and let $U_1\subseteq U$ be an identity neighbourhood in $N$ such that $\mathrm{Int}(K)U_1:=\{gug\inv \ | \ g\in K, \ u\in U_1\}\subseteq U$.
From (\ref{eqn:zzs}), (\ref{eqn:claim21}) and Claim~1.4, we get for all $t\in [0,R]$, $n\geq N_1$ and $s\leq n$ with $s\geq N_2'$ that
\begin{equation}\label{eqn:aaa}
\mathrm{Int}(z(t))(z_s(\tfrac{t}{n})\inv\cdot x(\tfrac{t}{n})\cdot z_s(\tfrac{t}{n}))=\exp_N(e^{\ad\exp_V\inv(z(t)z_s(\tfrac{t}{n})\inv)}\overline{x}(\tfrac{t}{n}))
\end{equation}
and that 
\begin{equation}\label{eqn:aab}
\norm{e^{\ad\exp_V\inv(z(t)z_s(\tfrac{t}{n})\inv)}\overline{x}(\tfrac{t}{n})}\leq \tfrac{\epsilon_2'}{n}\cdot e^{\epsilon_2'}.
\end{equation}
In particular, choosing $\epsilon_2'$ small enough, we may assume that 
$$z_s(\tfrac{t}{n})\inv\cdot x(\tfrac{t}{n})\cdot z_s(\tfrac{t}{n})\in \mathrm{Int}(z(t)\inv)U_1\subseteq \mathrm{Int}(K)U_1\subseteq U$$
for all $t\in [0,R]$, $n\geq N_1$ and $s\leq n$ with $s\geq N_2'$. Let now $V_2\subseteq V$ be a $0$-neighbourhood in $\n$ such that $\mathrm{Ad}(K)V_2\subseteq \overline{B}^{\n}_{\epsilon_2}(0)$. We further choose $\epsilon_2'$ small enough so that $\overline{B}^{\n}_{\epsilon_2'\cdot e^{\epsilon_2'}}(0)\subseteq V_2$. (We now fix this choice of $\epsilon_2'$, and hence also of $N_2'$.)
We then deduce from (\ref{eqn:aaa}) and (\ref{eqn:aab}) that for all $t\in [0,R]$, $n\geq N_1$ and $s\leq n$ with $s\geq N_2'$,
\begin{equation*}
n\cdot \mathrm{Ad}(z(t))(\exp_V\inv(z_s(\tfrac{t}{n})\inv\cdot x(\tfrac{t}{n})\cdot z_s(\tfrac{t}{n})))= n\cdot\exp_V\inv\big(\mathrm{Int}(z(t))(z_s(\tfrac{t}{n})\inv\cdot x(\tfrac{t}{n})\cdot z_s(\tfrac{t}{n}))\big)\in V_2
\end{equation*}
and hence that
\begin{equation*}
n\cdot \exp_V\inv(z_s(\tfrac{t}{n})\inv\cdot x(\tfrac{t}{n})\cdot z_s(\tfrac{t}{n}))\in \mathrm{Ad}(K)V_2\subseteq \overline{B}^{\n}_{\epsilon_2}(0).
\end{equation*}
This proves Claim~2.2 when $s\geq N_2'$, with $N_2:=N_1$. 

We now deal with the case where $s< N_2'$. In this case, since $z_s$ is continuous, there is some $N_2\geq N_1$ such that $$z_s(\tfrac{t}{n})\in U\quad\textrm{and}\quad \norm{\exp_V\inv(z_s(\tfrac{t}{n}))}\leq\epsilon'_2\quad\textrm{for all $t\in [0,R]$ and $n\geq N_2$.}$$
Up to increasing $N_2$, we may further assume that $z_s(\tfrac{t}{n})\inv\cdot x(\tfrac{t}{n})\cdot z_s(\tfrac{t}{n})\in U$ for all $t\in [0,R]$ and $n\geq N_2$.
Together with (\ref{eqn:claim21}) and Claim~1.4, this implies that
$$z_s(\tfrac{t}{n})\inv\cdot x(\tfrac{t}{n})\cdot z_s(\tfrac{t}{n})=\exp_N\big(e^{\ad \exp_V\inv(z_s(\tfrac{t}{n})\inv)}\overline{x}(\tfrac{t}{n})\big)$$
and 
$$\norm{e^{\ad \exp_V\inv(z_s(\tfrac{t}{n})\inv)}\overline{x}(\tfrac{t}{n})}\leq \tfrac{\epsilon_2'}{n}\cdot e^{\epsilon_2'}$$
for all $t\in [0,R]$ and $n\geq N_2$. Hence, as before,
$$z_s(\tfrac{t}{n})\inv\cdot x(\tfrac{t}{n})\cdot z_s(\tfrac{t}{n})\in U$$
and $$\norm{n\cdot\exp_V\inv\big(z_s(\tfrac{t}{n})\inv\cdot x(\tfrac{t}{n})\cdot z_s(\tfrac{t}{n})\big)}=\norm{n\cdot e^{\ad \exp_V\inv(z_s(\tfrac{t}{n})\inv)}\overline{x}(\tfrac{t}{n})}\leq \epsilon_2'\cdot e^{\epsilon_2'}\leq \epsilon_2$$
for all $t\in [0,R]$ and $n\geq N_2$. This concludes the proof of Claim~2.2.

\smallskip
\noindent
$\bullet$ \underline{Claim 2.3}: \emph{Let $\epsilon_3>0$. Then there is some $N_3\in\NN$ such that $$F_s(n,t)\in U\quad\textrm{and}\quad\norm{\exp_V\inv(F_s(n,t))}\leq \epsilon_3/n\quad\textrm{for all $t\in [0,R]$, $n\geq N_3$ and $s\in\{0,\dots,n\}$.}$$}
\\
Indeed, let $\epsilon_3'>0$. 
Define the continuous curve $$\widetilde{y}\co \RR\to G, \ \widetilde{y}(t):=\exp_G(-t(\beta_1'(0)+\beta_2'(0))).$$
The strong Trotter property of the Banach Lie group $G$ then implies that $y(\tfrac{t}{n})^{-n}\stackrel{n\to\infty}{\longrightarrow}\widetilde{y}(t)$ uniformly for $t\in [0,R]$. 
Hence there exists some $N_3'\in\NN$ such that $$\dist(y(\tfrac{t}{n})^{-n},\widetilde{y}(t))\leq\epsilon_3'\quad\textrm{for all $t\in [0,R]$ and $n\geq N_3'$.}$$
In particular, 
\begin{equation}\label{eqn:2.3.1}
\dist(y(\tfrac{t}{n})^{-s},\widetilde{y}(t))=\dist(y(\tfrac{st/n}{s})^{-s},\widetilde{y}(t))\leq\epsilon_3'\quad\textrm{for all $t\in [0,R]$ and $n\geq s\geq N_3'$.}
\end{equation}
Note also that by Claim~2.2, there is some $N_2\geq N_3'$ such that for all $n\geq N_2$, $s\in\{0,\dots,n\}$ and $t\in [0,R]$, 
\begin{equation}\label{eqn:2.3.2}
z_s(\tfrac{t}{n})\inv\cdot x(\tfrac{t}{n})\cdot z_s(\tfrac{t}{n})\in U\quad\textrm{and}\quad\norm{\exp_V\inv\big(z_s(\tfrac{t}{n})\inv\cdot x(\tfrac{t}{n})\cdot z_s(\tfrac{t}{n})\big)}\leq \epsilon_3'/n.
\end{equation}

Consider the compact subset $K:=\widetilde{y}([0,R])$ of $G$. Since the linear action $G\times\n\to\n,(g,v)\mapsto \dot{\pi}(g)v$ is continuous by Lemma~\ref{lemma:continuity_Tpi}, one may choose $\epsilon_3'$ small enough so that 
\begin{equation}\label{eqn:2.3.3}
\dot{\pi}(K\overline{B}^{G}_{\epsilon_3'}(1_G))\overline{B}^{\n}_{\epsilon_3'}(0)\subseteq \overline{B}^{\n}_{\epsilon_3}(0)\cap V,
\end{equation}
where the closed ball in $G$ is defined with respect to the metric $\dist$. We now fix this choice of $\epsilon_3'$, and hence also of $N_3'$.

From (\ref{eqn:2.3.1}), (\ref{eqn:2.3.2}) and (\ref{eqn:2.3.3}), we get for all $t\in [0,R]$, $n\geq N_2$ and $s\leq n$ with $s\geq N_3'$ that
\begin{equation*}
\dot{\pi}(y(\tfrac{t}{n})^{-s})\exp_V\inv(z_s(\tfrac{t}{n})\inv\cdot x(\tfrac{t}{n})\cdot z_s(\tfrac{t}{n}))\in V,
\end{equation*}
so that
\begin{equation*}
F_s(n,t)=\pi(y(\tfrac{t}{n})^{-s})(z_s(\tfrac{t}{n})\inv\cdot x(\tfrac{t}{n})\cdot z_s(\tfrac{t}{n}))=\exp_N\big(\dot{\pi}(y(\tfrac{t}{n})^{-s})\exp_V\inv(z_s(\tfrac{t}{n})\inv\cdot x(\tfrac{t}{n})\cdot z_s(\tfrac{t}{n}))\big)\in U,
\end{equation*}
and that 
\begin{equation*}
\norm{n\cdot \exp_V\inv(F_s(n,t))}=\norm{\dot{\pi}(y(\tfrac{t}{n})^{-s})\big(n\cdot \exp_V\inv(z_s(\tfrac{t}{n})\inv\cdot x(\tfrac{t}{n})\cdot z_s(\tfrac{t}{n}))\big)}\leq \epsilon_3.
\end{equation*}
This proves Claim~2.3 when $s\geq N_3'$, with $N_3:=N_2$.

We now deal with the case where $s< N_3'$. In this case, since $y^{-s}$ is continuous, there is some $N_3\geq N_2$ such that $$y(\tfrac{t}{n})^{-s}\in \overline{B}^{G}_{\epsilon_3'}(0)\quad\textrm{for all $t\in [0,R]$ and $n\geq N_3$.}$$
One then concludes as above that for all $t\in [0,R]$ and $n\geq N_3$,
$$F_s(n,t)\in U\quad\textrm{and}\quad\norm{n\cdot\exp_V\inv(F_s(n,t))}\leq \epsilon_3.$$
This concludes the proof of Claim~2.3.

\smallskip
\noindent
$\bullet$ \underline{Claim 2.4}: \emph{Let $\epsilon>0$. Then there is some $N\in\NN$ such that $$F(n,t)\in U\quad\textrm{and}\quad\norm{\exp_V\inv(F(n,t))}\leq \epsilon \quad\textrm{for all $t\in [0,R]$ and $n\geq N$.}$$}
\\
Indeed, let $\rho>0$ be such that $\overline{B}^{\n}_{\rho}(0)\subseteq V_1$. Denote for each $n\in\NN$ by $\widetilde{n}$ the smallest natural number of the form $\widetilde{n}=2^k$ for some $k\in\NN$, such that $\widetilde{n}\geq n$. 
By Claim~2.3, there is some $N\in\NN$ such that $$F_s(n,t)\in U\quad\textrm{and}\quad\norm{\exp_V\inv(F_s(n,t))}\leq \tfrac{\rho}{2n}\leq\rho/\widetilde{n}\quad\textrm{for all $t\in [0,R]$, $n\geq N$ and $s\in\{0,\dots,n\}$.}$$
It then follows from Claims~1.2 and 1.3 that for all $t\in [0,R]$ and $n\geq N$,
$$F(n,t)=\prod_{s=1}^{n}{F_s(n,t)}=\exp_V(\exp_V\inv(F_1(n,t))*\dots*\exp_V\inv(F_n(n,t)))\in U$$
and
\begin{equation*}
\norm{\exp_V\inv(F(n,t))}=\norm{\exp_V\inv(F_1(n,t))*\dots*\exp_V\inv(F_n(n,t))}\leq a_{\widetilde{n}}(\rho/\widetilde{n})\leq a(\rho)=\frac{1}{C}(e^{C\rho}-1).
\end{equation*}
Choosing $\rho$ such that $a(\rho)\leq\epsilon$ then yields the claim.

It now follows from Claim~2.4 that $F(n,t)\stackrel{n\to\infty}{\longrightarrow} 1_N$ uniformly for $t\in [0,R]$. This concludes the proof of the theorem.
\end{proof}

\begin{corollary}\label{corollary:TP}
Let $G,N$ be Banach Lie groups and $\pi\co G\to \Aut(N)$ be a continuous action. Then $H=N\rtimes_{\pi}G$ has the Trotter property: for all $C^1$ (local) one-parameter subgroups $\gamma_1,\gamma_2\co I\to H$,
\begin{equation*}
\lim_{n\to\infty}{\big(\gamma_1(\tfrac{t}{n})\gamma_2(\tfrac{t}{n})\big)^n}=\exp_H\big(t(\gamma_1'(0)+\gamma_2'(0))\big)
\end{equation*}
uniformly in $t$ on compact subsets of $[0,\infty[$.
\end{corollary}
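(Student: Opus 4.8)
The plan is to deduce this directly from Theorem~\ref{thm:TFimproved} together with Lemma~\ref{lemma:property_star}. Write $\gamma_i=(\alpha_i,\beta_i)\co I\to H=N\rtimes_{\pi}G$ with $\alpha_i\co I\to N$ and $\beta_i\co I\to G$ for $i=1,2$. Since each $\gamma_i$ is a $C^1$ (local) one-parameter subgroup, we have $\gamma_1(0)=\gamma_2(0)=1_H$, so the curve-hypotheses of Theorem~\ref{thm:TFimproved} are automatic; it remains only to check that the couple $(\gamma_1,\gamma_2)$ has the property $(\star)$.

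For this I would use the second projection $p\co H\to G,\ (n,g)\mapsto g$. This map is smooth (it is just the projection of the product manifold $N\times G$ onto its second factor) and it is a group homomorphism, since $p\big((n_1,g_1)(n_2,g_2)\big)=p(n_1\cdot\pi(g_1)n_2,g_1g_2)=g_1g_2$. Consequently $\beta_1=p\circ\gamma_1\co I\to G$ is a composition of a $C^1$-map with a smooth homomorphism, hence itself a $C^1$-curve, and it satisfies $\beta_1(s)\beta_1(t)=\beta_1(s+t)$ whenever $s,t,s+t$ lie in the relevant interval because $\gamma_1$ does and $p$ is multiplicative. Thus $\beta_1$ is a $C^1$ (local) one-parameter subgroup of $G$, which is exactly condition~(1) of Lemma~\ref{lemma:property_star}. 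Therefore $(\gamma_1,\gamma_2)$ has property~$(\star)$.

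Applying Theorem~\ref{thm:TFimproved} then gives
\[
\lim_{n\to\infty}\big(\gamma_1(\tfrac{t}{n})\gamma_2(\tfrac{t}{n})\big)^n=\exp_H\big(t(\gamma_1'(0)+\gamma_2'(0))\big)
\]
uniformly in $t$ on compact subsets of $[0,\infty[$, which is the claim. I expect no genuine obstacle here: the statement is a formal corollary of the two preceding results, and the only point requiring (minimal) care is verifying that the $G$-component of a one-parameter subgroup of $H$ is again a one-parameter subgroup, placing us within the scope of Lemma~\ref{lemma:property_star}(1). (Note in particular that one does not need the stronger alternative hypothesis $\alpha_2'(0)\in\n^1$ of Lemma~\ref{lemma:property_star}(2), which would fail in general.)
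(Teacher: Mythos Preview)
Your proof is correct and follows exactly the paper's approach: the paper's own proof is the single line ``This readily follows from Lemma~\ref{lemma:property_star} and Theorem~\ref{thm:TFimproved},'' and your argument simply spells out the (straightforward) verification that $\beta_1=p\circ\gamma_1$ is a (local) one-parameter subgroup of $G$, placing you in case~(1) of Lemma~\ref{lemma:property_star}.
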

\begin{proof}
This readily follows from Lemma~\ref{lemma:property_star} and Theorem~\ref{thm:TFimproved}.
\end{proof}

\subsection{Continuous one-parameter subgroups of \texorpdfstring{$H$}{H}}\label{section:COPSOH}
We conclude this section by investigating the space $\Hom^0(\RR,H)$ of continuous one-parameter subgroups of $H=N\rtimes_{\pi}G$. In contrast to the situation of Banach--Lie groups, not every $\gamma\in\Hom^0(\RR,H)$ is automatically smooth: using the non-smoothness of the multiplication in $H$, one can easily produce counterexamples by conjugating smooth one-parameter subgroups with suitable elements. If $N$ is abelian, we prove that, conversely, every $\gamma\in\Hom^0(\RR,H)$ is conjugated to some smooth one-parameter subgroup of $H$.

Given a continuous representation $\mathcal U\co\RR\to\GL(E), t\mapsto \mathcal U_t$ of $\RR$ on a Banach space $E$, we call a map $\alpha\co\RR\to E$ a \emph{$1$-cocycle} (or just \emph{cocycle}) for $\mathcal U$ if $\mathcal U_t\alpha(s)=\alpha(t+s)-\alpha(t)$ for all $s,t\in\RR$. Cocycles of the form $\alpha(t)=\mathcal U_tv-v$ for some $v\in E$ 
are called \emph{coboundaries}. 
We call two cocycles \emph{equivalent} if their difference is a coboundary. 
\begin{lemma}\label{lemma:continuous_cocycle}
Let $\mathcal U\co\RR\to\GL(E),t\mapsto \mathcal U_t$ be a continuous action of $\RR$ on a Banach space $E$. Then every continuous cocycle $\alpha\co\RR\to E$ for $\mathcal U$ is equivalent to a smooth cocycle for $\mathcal U$.
\end{lemma}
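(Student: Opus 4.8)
The plan is to smooth $\alpha$ by averaging it against a bump function in the $\RR$-variable, and to observe that, because of the cocycle identity, this averaging only alters $\alpha$ by a coboundary. First I would fix $\psi\in C^{\infty}_c(\RR)$ with $\int_{\RR}{\psi(s)ds}=1$ and define $\beta\co\RR\to E$ by
$$\beta(t):=\int_{\RR}{\psi(s)\big(\alpha(t+s)-\alpha(s)\big)ds}.$$
Since $\alpha$ is continuous, the integrands occurring below are continuous with compact support, so all the weak integrals exist (in the sense of \S\ref{section:integration}). I claim that $\beta$ is a smooth cocycle for $\mathcal U$ which is equivalent to $\alpha$; granting this, $\beta$ is the desired smooth cocycle.

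For smoothness, substitute $u=t+s$ to rewrite $\beta(t)=\int_{\RR}{\psi(u-t)\alpha(u)du}-v$, where $v:=\int_{\RR}{\psi(s)\alpha(s)ds}\in E$. As $\psi$ is smooth with compact support and $\alpha$ is continuous, one may differentiate under the integral sign arbitrarily often, all derivatives falling on $\psi$; hence $\beta\in C^{\infty}(\RR,E)$. For the cocycle property and the equivalence with $\alpha$, I would use the cocycle relation in its two forms $\alpha(t+s)=\alpha(t)+\mathcal U_t\alpha(s)$ and $\alpha(s+t)=\alpha(s)+\mathcal U_s\alpha(t)$, which together give the symmetric identity $\alpha(t+s)-\alpha(s)-\alpha(t)=(\mathcal U_t-\id)\alpha(s)=(\mathcal U_s-\id)\alpha(t)$. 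Since $\int_{\RR}{\psi(s)ds}=1$ we have $\alpha(t)=\int_{\RR}{\psi(s)\alpha(t)ds}$, so, using the first form of this identity and pulling the bounded operator $\mathcal U_t$ out of the weak integral,
$$\beta(t)-\alpha(t)=\int_{\RR}{\psi(s)\big(\alpha(t+s)-\alpha(s)-\alpha(t)\big)ds}=(\mathcal U_t-\id)\int_{\RR}{\psi(s)\alpha(s)ds}=\mathcal U_tv-v.$$
Thus $\beta-\alpha$ is the coboundary attached to $v$; since coboundaries are cocycles and $\alpha$ is a cocycle, $\beta$ is a cocycle, and it is equivalent to $\alpha$ by construction.

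The computation is short and there is no serious obstacle once the right averaging is chosen; the only point worth flagging is why one cannot proceed more naively. Applying the smoothing operator $\mathcal U_\psi=\int_{\RR}{\psi(s)\mathcal U_s\,ds}$ directly to the values of $\alpha$ does yield a cocycle landing in the space of smooth vectors, but this does \emph{not} by itself make the curve smooth in $t$. The key is that the same expression $\beta(t)=\mathcal U_\psi\alpha(t)$, rewritten via $u=t+s$ as an ordinary convolution of $\psi$ with $\alpha$, is manifestly $C^{\infty}$ in $t$, while the symmetry of the cocycle identity simultaneously identifies $\beta-\alpha$ with a coboundary. The remaining verifications — existence of the weak integrals, and the legitimacy of differentiating under the integral sign and of commuting $\mathcal U_t$ with a weak integral — are entirely routine.
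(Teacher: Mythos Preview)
Your proof is correct and follows essentially the same approach as the paper: both pick a bump function $\psi$ with $\int\psi=1$, set $v=\int\psi(s)\alpha(s)\,ds$, and show that $\alpha(t)+(\mathcal U_tv-v)$ equals the convolution $\int\psi(u-t)\alpha(u)\,du-v$, which is manifestly smooth. The only difference is presentational --- the paper starts from the coboundary $\mathcal U_tv-v$ and computes that $\alpha$ plus this coboundary is smooth, whereas you start from the smoothed curve $\beta$ and compute that $\beta-\alpha$ is the coboundary --- but the underlying identity and the vector $v$ are identical.
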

\begin{proof}
Pick some bump function $f\in C^{\infty}_c(\RR)$ with $\int_{\RR}f(s)ds=1$, and set $v:=\int_{\RR}f(s)\alpha(s)ds\in E$. Consider the coboundary 
$\widetilde{\alpha}(t):=\mathcal U_tv-v$. Then for all $t\in\RR$, we have
\begin{equation*}
\begin{aligned}
\widetilde{\alpha}(t)&=\int_{\RR}f(s)(\mathcal U_t\alpha(s)-\alpha(s))ds=\int_{\RR}f(s)(\alpha(t+s)-\alpha(t)-\alpha(s))ds\\
&=\int_{\RR}f(s)(\alpha(t+s)-\alpha(s))ds-\alpha(t)=\int_{\RR}(f(s-t)-f(s))\alpha(s)ds-\alpha(t).
\end{aligned}
\end{equation*}
Hence $\widetilde{\alpha}(t)+\alpha(t)=\int_{\RR}(f(s-t)-f(s))\alpha(s)ds$ is a smooth cocycle for $\mathcal U$, yielding the lemma.
\end{proof}

\begin{prop}\label{prop:cont_1param}
If $N$ is abelian, then every continuous one-parameter subgroup $\gamma\co \RR\to H=N\rtimes_{\pi}G$ is conjugated to a smooth one, hence of the form 
$\gamma(t) = g\exp_H(tx) g^{-1}$ for some $g \in H, x\in \hh$.
\end{prop}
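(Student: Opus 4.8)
The plan is to write $\gamma(t)=(\alpha(t),\beta(t))$ with continuous maps $\alpha\co\RR\to N$ and $\beta\co\RR\to G$ and to use the relation $\gamma(t+s)=\gamma(t)\gamma(s)$, which amounts to
\begin{equation*}
\alpha(t+s)=\alpha(t)\cdot\pi(\beta(t))\alpha(s)\quad\textrm{and}\quad \beta(t+s)=\beta(t)\beta(s).
\end{equation*}
Thus $\beta$ is a continuous one-parameter subgroup of the Banach--Lie group $G$, hence smooth, say $\beta(t)=\exp_G(tx_0)$ with $x_0\in\g$, and $\alpha$ is a continuous $1$-cocycle for the continuous action $\mathcal U_t:=\pi(\beta(t))=\pi(\exp_G(tx_0))$ of $\RR$ on $N$. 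Since $\alpha(\RR)$ is connected and contains $1_N$, it lies in the identity component $N_0$, which is again an abelian Banach--Lie group; we work inside $N_0$ from now on. The key reduction is the computation in $H=N\rtimes_\pi G$ that, for $n\in N_0$,
\begin{equation*}
(n,1)\,\gamma(t)\,(n,1)\inv=\big(\alpha(t)\cdot(\mathcal U_t n\cdot n\inv)\inv,\ \beta(t)\big),
\end{equation*}
so that conjugating $\gamma$ by $(n,1)$ replaces the cocycle $\alpha$ by the cohomologous cocycle $t\mapsto\alpha(t)\cdot(\mathcal U_t n\cdot n\inv)\inv$. Hence it suffices to show that $\alpha$ is cohomologous to a \emph{smooth} cocycle, for then the corresponding conjugate of $\gamma$ will be a smooth one-parameter subgroup.

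The main step is to transfer this to the Lie algebra, where Lemma~\ref{lemma:continuous_cocycle} becomes applicable. Since $N_0$ is a connected abelian Banach--Lie group, $\exp_N\co\n\to N_0$ is a group homomorphism and a covering map, with discrete kernel $\Gamma$. The action lifts: $\widetilde{\mathcal U}_t:=\dot{\pi}(\exp_G(tx_0))\in\GL(\n)$ defines a continuous action of $\RR$ on the Banach space $\n$ (by Lemma~\ref{lemma:continuity_Tpi} and continuity of $t\mapsto\exp_G(tx_0)$), satisfying $\exp_N\circ\widetilde{\mathcal U}_t=\mathcal U_t\circ\exp_N$ by naturality of the exponential function. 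As $\RR$ is simply connected, $\alpha$ lifts uniquely to a continuous map $\widetilde\alpha\co\RR\to\n$ with $\widetilde\alpha(0)=0$ and $\exp_N\circ\widetilde\alpha=\alpha$. Using that $\exp_N$ is a homomorphism intertwining $\widetilde{\mathcal U}$ and $\mathcal U$, one gets $\exp_N(\widetilde\alpha(t+s))=\exp_N\big(\widetilde\alpha(t)+\widetilde{\mathcal U}_t\widetilde\alpha(s)\big)$, so that
\begin{equation*}
c(t,s):=\widetilde\alpha(t+s)-\widetilde\alpha(t)-\widetilde{\mathcal U}_t\widetilde\alpha(s)\in\Gamma\quad\textrm{for all }s,t\in\RR.
\end{equation*}
Since $c$ is continuous, $\Gamma$ is discrete, $\RR^2$ is connected and $c(0,0)=0$, we conclude $c\equiv 0$; hence $\widetilde\alpha$ is a continuous $1$-cocycle for $\widetilde{\mathcal U}$ on the Banach space $\n$.

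Now Lemma~\ref{lemma:continuous_cocycle} yields $v\in\n$ such that $t\mapsto\widetilde\alpha(t)-(\widetilde{\mathcal U}_t v-v)$ is smooth. Setting $n:=\exp_N(v)\in N_0$ and pushing this down through $\exp_N$ (using once more that $\exp_N$ intertwines $\widetilde{\mathcal U}$ and $\mathcal U$), the cocycle
\begin{equation*}
\alpha_n(t):=\alpha(t)\cdot(\mathcal U_t n\cdot n\inv)\inv=\exp_N\big(\widetilde\alpha(t)-\widetilde{\mathcal U}_t v+v\big)
\end{equation*}
is smooth. By the reduction above, $\gamma_n:=(n,1)\,\gamma(\cdot)\,(n,1)\inv=(\alpha_n,\beta)\co\RR\to H$ is therefore a smooth one-parameter subgroup. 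Finally, any smooth one-parameter subgroup $\eta$ of $H$ satisfies $\eta'(t)=\eta(t).x$ with $x:=\eta'(0)\in\hh$ (differentiate $\eta(t+s)=\lambda_{\eta(t)}(\eta(s))$ at $s=0$, using that left translations in $H$ are smooth), so $\eta$ is a $C^1$ solution of the initial value problem $\zeta'=\zeta.x$, $\zeta(0)=1_H$; since $H$ admits the exponential function $\exp_H$ (Theorem~\ref{thm:Rreg_Glinear} and its corollary) and such solutions are unique (cf.\ \cite[\S II.3]{NeJap}), $\eta(t)=\exp_H(tx)$. Applying this to $\eta=\gamma_n$ gives $\gamma(t)=g\exp_H(tx)g\inv$ with $x:=\gamma_n'(0)\in\hh$ and $g:=(n,1)\inv=(n\inv,1)\in H$.

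The only genuinely delicate point is that Lemma~\ref{lemma:continuous_cocycle} is stated for actions on a Banach space, whereas $N$ is merely an abelian Banach--Lie group; bridging this --- lifting $\alpha$ along the covering $\exp_N\co\n\to N_0$ and using discreteness of $\ker\exp_N$ to see that the lift is still a genuine cocycle --- is the crux. The remaining ingredients (semidirect-product arithmetic, naturality of the exponential maps, and uniqueness of solutions of the left-invariant ODE) are routine.
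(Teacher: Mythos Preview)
Your proof is correct and follows essentially the same route as the paper's. The paper first treats the case where $N$ is a Banach space directly via Lemma~\ref{lemma:continuous_cocycle}, and then reduces the general abelian case to it by passing to the simply connected cover $\widetilde N$ of (the identity component of) $N$; you carry out exactly this reduction explicitly, identifying $\widetilde N$ with $\n$ via $\exp_N$, lifting the cocycle $\alpha$ along this covering, and using discreteness of $\ker\exp_N$ to see that the lift remains a genuine cocycle. You also spell out the final identification of a smooth one-parameter subgroup of $H$ with $t\mapsto\exp_H(tx)$, which the paper leaves implicit.
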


\begin{proof}
Assume first that $N=E$ is a Banach space. Write $\gamma(t)=(\alpha(t),\beta(t))$ for some continuous curves $\alpha\co\RR\to N$ and $\beta\co\RR\to G$. Since $\gamma$ is a one-parameter subgroup, $\beta$ is a (smooth) one-parameter subgroup of $G$ and $$\alpha(s+t)=\pi(\beta(t))\alpha(s)+\alpha(t)\quad\textrm{for all $s,t\in\RR$}.$$ In other words, $\alpha$ is a cocycle for the continuous $\RR$-action $\mathcal U:=\pi\circ\beta\co\RR\to\GL(N)$. It then follows from Lemma~\ref{lemma:continuous_cocycle} that there is some $v\in N$ such that $\widetilde{\alpha}(t):=\alpha(t)+\pi(\beta(t))v-v$ is a smooth cocycle for $\pi\circ\beta$. Hence $(\widetilde{\alpha}(t),\beta(t))=(v,1_G)\inv\gamma(t)(v,1_G)$ is a smooth one-parameter subgroup of $H$, as desired.

Assume next that $N$ is abelian. Since $\gamma(\RR)$ is connected, we may without loss of generality assume that $N$ is connected. Then the simply connected cover $\widetilde{N}$ of $N$ is a Banach space, and the action $\pi\co G\to \Aut(N)$ lifts to a continuous action $\widetilde{\pi}\co G\to \GL(\widetilde{N})$. One may then conclude using the first part of the proof.  
\end{proof}

\section{The Fr\'echet--Lie group \texorpdfstring{$N^{\infty}$}{} of smooth elements}
We use, throughout this section, the notation of \S\ref{section:notation}. In particular, $G$ and $N$ are Banach--Lie groups with respective Lie algebras $\g$ and $\n$, and $\pi^{\wedge}\co G\times N\to N,(g,n)\mapsto\pi(g)n$ is a continuous action. We establish in this section that the space $$N^{\infty}=\{n\in N \ | \ \textrm{$\pi^n\co G\to N, \ g\mapsto \pi(g)n$ is smooth}\}$$ of smooth elements for $\pi$ admits a Fr\'echet--Lie group structure for which the restricted action $$\pi^{\wedge}_{\infty}\co G\times N^{\infty}\to N^{\infty}, \ (g,n)\mapsto \pi^{\wedge}(g,n)$$ is smooth. We moreover show that the Fr\'echet--Lie groups $N^{\infty}$ and $H^{\infty}:=N^{\infty}\rtimes_{\pi}G$ are $R$-regular, and hence have the strong Trotter and commutator properties.

\subsection{A Fr\'echet--Lie group structure on \texorpdfstring{$H^{\infty}$}{the group of smooth elements}}\label{section:FLGSOH}
By \cite[Theorem~4.4 and Prop.~5.4]{Ne10b}, the space $\n^{\infty}=\{v\in\n \ | \ \textrm{$\dot{\pi}^v$ is smooth}\}$ of smooth vectors for $\dot{\pi}$ admits a Fr\'echet space topology for which the restricted action $\dot{\pi}^{\wedge}_{\infty}\co G\times \n^{\infty}\to \n^{\infty},(g,v)\mapsto \dot{\pi}(g)v$ is smooth. We will show that there exists a unique Fr\'echet--Lie group structure on $N^{\infty}$ for which the exponential map $\n^{\infty}\to N^{\infty},v\mapsto\exp_N(v)$ is a local diffeomorphism.

We recall from \cite[Definition~4.1]{Ne10b} that $\n^{\infty}$ is topologised as follows. Recall from \S\ref{section:notation} (see also \cite[Section~4]{Ne10b}) that we have a representation $d\dot{\pi}\co \g\to\End(\n^{\infty})$ of $\g$, defined by
$$d\dot{\pi}(x)v:=d(\dot{\pi}^v)(1_G;x)=\frac{d}{dt}\Big|_{t=0}\dot{\pi}(\exp_G(tx))v\quad\textrm{for all $x\in \g$ and $v\in\n^{\infty}$.}$$
In particular, in the notation of \S\ref{section:LG_NeJap} (see (\ref{eqn:k-th_diff})), we have for all $k\in\NN$, $x_1,\dots,x_k\in\g$ and $v\in \n^{\infty}$ that
\begin{equation}\label{eqn:dpi_d1}
\begin{aligned}
d\dot{\pi}(x_1)\dots d\dot{\pi}(x_k)v&=\frac{d}{dt_1}\Big|_{t_1=0}\dots\frac{d}{dt_k}\Big|_{t_k=0}\dot{\pi}(\exp_G(t_1x_1)\dots \exp_G(t_kx_k))v\\
&= d^k(\dot{\pi}^v)(1_G;x_1,\dots,x_k).
\end{aligned}
\end{equation}
For each $k\in\NN$, let $\Mult^k(\g,\n)$ be the space of continuous $k$-linear maps $\g^k\to\n$, which we equip with the topology of uniform convergence on bounded sets. Thus $\Mult^k(\g,\n)$ is a Banach space with respect to the norm $$\norm{\omega}:=\sup\{\norm{\omega(x_1,\dots,x_k)} \ : \ \norm{x_1},\dots,\norm{x_k}\leq 1\}.$$
Consider also the map
$$\Psi_k\co \n^{\infty}\to \Mult^k(\g,\n), \quad \Psi_k(v)(x_1,\dots,x_k):=d\dot{\pi}(x_1)\dots d\dot{\pi}(x_k)v.$$
This yields an injective linear map
$$\Psi\co\n^{\infty}\to \prod_{k\in\NN}{\Mult^k(\g,\n)},$$
and we define the topology on $\n^{\infty}$ so that $\Psi$ is a topological embedding. 
Then $\n^{\infty}$ is a Fr\'echet space, with respect to the family $\{p_k \ | \ k\in\NN\}$ of seminorms defined by $$p_k(v):=\sup\{\norm{d\dot{\pi}(x_1)\dots d\dot{\pi}(x_k)v} \ : \ \norm{x_1},\dots,\norm{x_k}\leq 1\}.$$

The following two lemmas provide our main tool to establish smoothness of $\n^{\infty}$-valued maps.

\begin{lemma}\label{lemma:technical_le_Gl}
Let $X,Y,Z$ be locally convex spaces and $U_Y\subseteq Y$ be open. Let $f\co U_Y\to \Mult^k(X,Z)$ be a map such that $\widetilde{f}\co U_Y\times X^k\to Z, (y,x_1,\dots,x_k)\mapsto f(y)(x_1,\dots,x_k)$ is smooth. Then $f$ is smooth.
\end{lemma}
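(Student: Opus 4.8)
The statement to prove is Lemma~\ref{lemma:technical_le_Gl}: given locally convex spaces $X,Y,Z$, an open subset $U_Y\subseteq Y$, and a map $f\co U_Y\to\Mult^k(X,Z)$ for which the associated map $\widetilde f\co U_Y\times X^k\to Z$, $(y,x_1,\dots,x_k)\mapsto f(y)(x_1,\dots,x_k)$, is smooth, we must conclude that $f$ itself is smooth. The strategy is to work directly from the definition of smoothness in terms of iterated directional derivatives (as recalled in \S\ref{section:Diff}) together with the universal property of the topology on $\Mult^k(X,Z)$: since $\Mult^k(X,Z)$ carries the topology of uniform convergence on bounded sets, a map into $\Mult^k(X,Z)$ is continuous, and more generally has a given limit, precisely when the induced maps into $Z$ converge uniformly on bounded subsets of $X^k$.

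The main steps I would carry out are as follows. First I would identify the candidate for the directional derivative of $f$: for $y\in U_Y$ and $w\in Y$, the natural guess is $df(y,w)\in\Mult^k(X,Z)$ defined by $df(y,w)(x_1,\dots,x_k):=d_1\widetilde f(y,x_1,\dots,x_k;w)$, i.e. the partial derivative of $\widetilde f$ in the $Y$-direction. One checks this is genuinely $k$-linear in $(x_1,\dots,x_k)$ because differentiation in the $w$-direction commutes with the linear structure in the $X^k$-slots, and it is separately continuous; continuity of $d\widetilde f$ then upgrades this to joint continuity, so $df(y,w)$ indeed lies in $\Mult^k(X,Z)$ (here one uses that a separately continuous multilinear map that is the pointwise limit of a difference quotient of continuous multilinear maps is continuous, or more simply that $d\widetilde f$ restricted to $\{(y,x_1,\dots,x_k;w)\}$ is continuous). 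Second, I would verify that $\frac{1}{t}(f(y+tw)-f(y))\to df(y,w)$ in $\Mult^k(X,Z)$, i.e. uniformly on bounded subsets $B\subseteq X^k$; this is exactly the statement that the difference quotients of $\widetilde f$ converge uniformly in the $X^k$-variable over bounded sets, which follows from smoothness of $\widetilde f$ by a mean-value / fundamental-theorem-of-calculus estimate (cf.\ \cite[Prop.~I.2.3]{NeJap}) writing $\widetilde f(y+tw,\cdot)-\widetilde f(y,\cdot)=t\int_0^1 d_1\widetilde f(y+stw,\cdot;w)\,ds$ and using continuity of $d_1\widetilde f$ on the compact-times-bounded set. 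Third, I would observe that $df\co U_Y\times Y\to\Mult^k(X,Z)$ has the same structural form as $f$: its associated $Z$-valued map $(y,w,x_1,\dots,x_k)\mapsto df(y,w)(x_1,\dots,x_k)=d_1\widetilde f(y,x_1,\dots,x_k;w)$ is smooth because $\widetilde f$ is, so by induction on the order of differentiation $f$ is $C^r$ for every $r$, hence smooth.

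The step I expect to be the main obstacle is the \emph{uniform} convergence of the difference quotients on bounded sets, i.e.\ showing convergence actually takes place in the topology of $\Mult^k(X,Z)$ rather than merely pointwise. In a general locally convex setting one must be careful: the estimate $\|\widetilde f(y+tw,x_1,\dots,x_k)-\widetilde f(y,x_1,\dots,x_k)-t\,d_1\widetilde f(y,x_1,\dots,x_k;w)\|\le |t|\sup_{s\in[0,1]}\|d_1\widetilde f(y+stw,x_1,\dots,x_k;w)-d_1\widetilde f(y,x_1,\dots,x_k;w)\|$ must be made uniform over $(x_1,\dots,x_k)$ ranging in a bounded set $B$, which requires continuity of $d_1\widetilde f$ on the set $[0,1]\cdot w+\{y\}\times B$ (a product of a compact set with a bounded set) and an argument that such a set is "small enough" for the seminorms of $Z$ — this is where one invokes that $d_1\widetilde f$ is continuous and, when needed, restricts attention to the relevant seminorm on $Z$ and the corresponding uniform behaviour. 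Once this uniformity is in hand, everything else is a routine induction, and I would phrase the write-up so that the inductive step is immediate from the observation that $df$ is again of the hypothesised form. (In fact this lemma is essentially \cite[2.2 or a variant]{Gl15}; if one is willing to cite it, the proof reduces to checking that the hypotheses match, but I would prefer to give the short self-contained argument above.)
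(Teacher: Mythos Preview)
The paper's own proof of this lemma is a one-line citation to \cite[Proposition~2.1(b)]{Gl07} or \cite[Corollary~1.6.32]{GlNe16}; it does not give a self-contained argument. Your plan is therefore genuinely different in that it sketches a direct proof, and you even note at the end that a citation would suffice (though the reference you name, \cite[2.2]{Gl15}, is a related but distinct statement about $R(I,\cdot)$; the ones the paper actually invokes are the two above).

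Your overall strategy---identify the candidate derivative $df(y,w)=d_1\widetilde f(y,\cdot\,;w)$, prove the difference quotient converges in $\Mult^k(X,Z)$, and iterate---is correct. You also correctly isolate the one nontrivial point: upgrading pointwise convergence to convergence uniform on bounded subsets of $X^k$. However, your proposed mechanism for this (``continuity of $d_1\widetilde f$ on the compact-times-bounded set'') is not quite right and, taken literally, does not work: bounded subsets of $X^k$ are not compact in general, so continuity on such a set gives no uniformity by itself. (Indeed, for a merely continuous $\widetilde f$ the conclusion fails: take $\widetilde f(t,x)=T_t x$ for a strongly continuous but not norm-continuous semigroup on a Banach space.) What actually makes the argument go through is the \emph{multilinearity}: since $d_1\widetilde f\co U_Y\times X^k\times Y\to Z$ is continuous and $(k{+}1)$-linear in $(x_1,\dots,x_k,w)$, continuity at $(y_0,0,\dots,0,0)$ yields, for each seminorm $q$ on $Z$, a neighbourhood $V\ni y_0$, seminorms $p_1,\dots,p_k$ on $X$ and $r$ on $Y$, and a constant $C$ with
\[
q\big(d_1\widetilde f(y,x_1,\dots,x_k;w)\big)\le C\,p_1(x_1)\cdots p_k(x_k)\,r(w)\qquad(y\in V).
\]
Combined with your integral remainder formula (and the analogous estimate for $d_1^2\widetilde f$, which is $(k{+}2)$-linear), this immediately gives the required uniformity over bounded $B\subseteq X^k$, since $\sup_{x\in B}p_i(x_i)<\infty$. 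Once you insert this estimate explicitly, your inductive scheme goes through cleanly and yields a complete self-contained proof; the paper simply outsources the whole argument to the cited references.
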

\begin{proof}
This follows from \cite[Proposition~2.1(b)]{Gl07} or \cite[Corollary~1.6.32]{GlNe16}.
\end{proof}

\begin{lemma}\label{lemma:corestriction_smoothbis}
Let $F$ be one of the Fr\'echet spaces $\n^{\infty}$, $\LLL^{\infty}(I,\n^{\infty})$ or $C(I,\n^{\infty})$, and write $F_B$ for the Banach space $\n$, $\LLL^{\infty}(I,\n)$ or $C(I,\n)$, respectively. Let $E$ be a locally convex space, $U\subseteq E$ be an open subset and $h\co U\to F$ be a map. Assume that there exists some identity neighbourhood $U_G$ in $G$ such that the map $$f\co U_G\times U\to F_B, \ (g,y)\mapsto \dot{\pi}^{h(y)}(g)=\dot{\pi}(g)h(y)$$ is smooth. Then $h$ is smooth. 
\end{lemma}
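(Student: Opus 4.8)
The plan is to reduce smoothness of $h\co U\to F$ to smoothness of the "evaluated" map $f\co U_G\times U\to F_B$ by composing with the topological embedding $\Psi$ (or its analogue for $\LLL^\infty(I,\n^\infty)$, $C(I,\n^\infty)$) that defines the Fréchet topology on $F$. Since $\Psi$ is a topological embedding into a product $\prod_{k\in\NN}\Mult^k(\g,\n)$ (respectively $\prod_k\Mult^k(\g,\LLL^\infty(I,\n))$ or $\prod_k\Mult^k(\g,C(I,\n))$), and smoothness into a product is componentwise, it suffices to show that each component $\Psi_k\circ h\co U\to\Mult^k(\g,\n)$ is smooth. By Lemma~\ref{lemma:technical_le_Gl}, this in turn reduces to showing that the map
$$
U\times\g^k\to F_B,\quad (y,x_1,\dots,x_k)\mapsto \Psi_k(h(y))(x_1,\dots,x_k)=d\dot\pi(x_1)\cdots d\dot\pi(x_k)h(y)
$$
is smooth (here I use that $\Mult^k(\g,\cdot)$ for the Fréchet space $F$ embeds into the product of $\Mult^k(\g,\cdot)$'s of its "Banach coordinates", and unravel the seminorms $p_k$ from the definition of $\n^\infty$).

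Next I would express the iterated operators $d\dot\pi(x_1)\cdots d\dot\pi(x_k)$ applied to $h(y)$ in terms of the map $f$, using the identity \eqref{eqn:dpi_d1}, namely $d\dot\pi(x_1)\cdots d\dot\pi(x_k)v=d^k(\dot\pi^v)(1_G;x_1,\dots,x_k)$, and the fact that $\dot\pi^{h(y)}(g)=\dot\pi(g)h(y)=f(g,y)$. Concretely, for $g$ in the exponential chart $U_G$ one has, by \eqref{eqn:k-th_diff} applied in the first variable of $f$,
$$
d\dot\pi(x_1)\cdots d\dot\pi(x_k)h(y)=d_1^kf(1_G,x_1,\dots,x_k,y),
$$
and $d_1^kf\co U_G\times\g^k\times U\to F_B$ is smooth because $f$ is smooth (this is exactly the statement recorded after \eqref{eqn:k-th_diff} that $d_1^s f$ is $C^{k-s}$, here for all $s$, hence smooth). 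Restricting to $g=1_G$ and precomposing with the inclusion $U\times\g^k\hookrightarrow U_G\times\g^k\times U$, $(y,x_1,\dots,x_k)\mapsto(1_G,x_1,\dots,x_k,y)$, which is smooth (affine), shows the displayed map above is smooth. Composing back through Lemma~\ref{lemma:technical_le_Gl} gives smoothness of each $\Psi_k\circ h$, and hence of $\Psi\circ h$, and hence of $h$ since $\Psi$ is a topological embedding onto a (closed) subspace and $h$ takes values in $F$.

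The main point requiring care — and the step I expect to be the real obstacle — is handling the three cases $F\in\{\n^\infty,\LLL^\infty(I,\n^\infty),C(I,\n^\infty)\}$ uniformly, i.e.\ checking that the Fréchet topology on $\LLL^\infty(I,\n^\infty)$ and $C(I,\n^\infty)$ is again given by seminorms of the form $\alpha\mapsto\sup_{\|x_i\|\le 1}\|d\dot\pi(x_1)\cdots d\dot\pi(x_k)\alpha(\cdot)\|_{F_B}$, so that the same embedding-into-a-product argument applies with $\Mult^k(\g,F_B)$ in place of $\Mult^k(\g,\n)$; this uses that $d\dot\pi(x)$ acts on these function spaces pointwise and continuously, together with the definition of the topology on $\n^\infty$ transported pointwise. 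Once this identification of topologies is in place, the argument is the formal reduction sketched above; everything else (smoothness of $d_1^kf$, of the affine inclusion, componentwise smoothness into products, Lemmas~\ref{lemma:technical_le_Gl}) is already available. I would also remark that it is enough to verify smoothness of $f$ on \emph{some} identity neighbourhood $U_G$, since $\dot\pi^{h(y)}(g_0g)=\dot\pi(g_0)\dot\pi^{h(y)}(g)$ and $\dot\pi(g_0)$ is a continuous linear automorphism, so smoothness of $f$ propagates to all of $G$ — though this is not even needed, as the exponential chart at $1_G$ suffices to compute all the $d_1^k f$.
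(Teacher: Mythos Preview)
Your proposal is correct and follows essentially the same route as the paper: compute the iterated derivatives $d_1^kf(1_G,x_1,\dots,x_k,y)=d^k(\dot\pi^{h(y)})(1_G;x_1,\dots,x_k)$, apply Lemma~\ref{lemma:technical_le_Gl} to obtain smoothness of $\Psi_k\circ h\co U\to\Mult^k(\g,F_B)$, and conclude via the embedding $\Psi$. The paper handles your ``main point requiring care'' (the function-space cases) with a one-line parenthetical defining $\Psi_k$ pointwise on $\LLL^\infty(I,\n^\infty)$ and $C(I,\n^\infty)$; your more explicit discussion of why the Fr\'echet topology on these spaces is captured by the seminorms $\alpha\mapsto\sup_{\|x_i\|\le1}\|d\dot\pi(x_1)\cdots d\dot\pi(x_k)\alpha\|_{F_B}$ is a welcome elaboration rather than a deviation.
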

\begin{proof}
Since $f$ is smooth, the maps 
$$\widetilde{f}_k\co \g^k\times U\to F_B, \ (x_1,\dots,x_k,y)\mapsto (d_1^kf)(1_G,x_1,\dots,x_k,y)=d^k(\dot{\pi}^{h(y)})(1_G,x_1,\dots,x_k)$$
are also smooth for each $k\in\NN$. Lemma~\ref{lemma:technical_le_Gl} then implies that for each $k\in\NN$, the induced map $$f_k\co U\to \Mult^k(\g,F_B), \ y\mapsto \big[(x_1,\dots,x_k)\mapsto d^k(\dot{\pi}^{h(y)})(1_G,x_1,\dots,x_k)\big]$$
is smooth. Since $f_k=\Psi_k\circ h$ by (\ref{eqn:dpi_d1}) (where $\Psi_k(\alpha)(t):=\Psi_k(\alpha(t))$ for $t\in I$ and $\alpha\in F$ if $F$ is $\LLL^{\infty}(I,\n^{\infty})$ or $C(I,\n^{\infty})$), this in turn implies that $h$ is smooth, as desired.
\end{proof}

We now introduce some additional notation, allowing us to work in charts.
\begin{lemma}\label{lemma:nNinfty}
There exists an open connected symmetric $0$-neighbourhood $V$ in $\n$ such that the following assertions hold:
\begin{enumerate}
\item There exists some open $0$-neighbourhood $W\subseteq \n$ with $V\subseteq W$ such that $\exp_W:=\exp_N|_W$ is a diffeomorphism onto $\exp_N(W)$, and such that $\exp_N(V)^2\subseteq \exp_N(W)$. In particular, one may define the local multiplication $*\co V\times V\to \n,(x,y)\mapsto x*y:=\exp_W\inv(\exp_W(x)\exp_W(y))$.
\item $\exp_N(V^{\infty})=N^{\infty}\cap \exp_N(V)$, where $V^{\infty}:=V\cap \n^{\infty}$.
\end{enumerate}
\end{lemma}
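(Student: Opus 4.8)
The plan is to obtain (1) from standard Banach--Lie theory and to deduce (2) from the naturality relation $\pi(g)\exp_N(v)=\exp_N(\dot\pi(g)v)$ (valid for all $g\in G$, $v\in\n$) together with the continuity of $\pi^{\wedge}$.

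For (1), since $N$ is a Banach--Lie group I would first fix, as in \S\ref{section:LG_NeJap}, an open $0$-neighbourhood $W\subseteq\n$ on which $\exp_W:=\exp_N|_W$ is a diffeomorphism onto the open subset $\exp_N(W)\subseteq N$. Continuity of the multiplication in $N$ at $(1_N,1_N)$ then yields an open $0$-neighbourhood $V_0\subseteq W$ with $\exp_N(V_0)\exp_N(V_0)\subseteq\exp_N(W)$, and taking $V$ to be a connected symmetric open $0$-neighbourhood inside $V_0\cap(-V_0)$ (for instance an open ball centred at $0$) produces $V\subseteq W$ with $\exp_N(V)^2\subseteq\exp_N(W)$; the local multiplication $*$ is then well-defined on $V\times V$.

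For the inclusion $\exp_N(V^{\infty})\subseteq N^{\infty}\cap\exp_N(V)$ in (2): given $v\in V^{\infty}=V\cap\n^{\infty}$, naturality of $\exp_N$ gives $\pi^{\exp_N(v)}=\exp_N\circ\dot\pi^v$, which is smooth since $\dot\pi^v\co G\to\n$ is smooth by definition of $\n^{\infty}$ and $\exp_N$ is smooth; hence $\exp_N(v)\in N^{\infty}$. For the reverse inclusion, given $n\in N^{\infty}\cap\exp_N(V)$, I would write $n=\exp_N(v)$ with $v\in V$ uniquely determined (as $\exp_W$ is injective and $V\subseteq W$) and show $v\in\n^{\infty}$. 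Using continuity of $\pi^{\wedge}$ at $(1_G,n)$ and openness of $\exp_N(W)\ni n$, choose an open $0$-neighbourhood $U_G\subseteq G$ with $\pi(U_G)n\subseteq\exp_N(W)$; for $g\in U_G$, naturality gives $\pi(g)n=\exp_N(\dot\pi(g)v)$, whence by injectivity of $\exp_W$ we get $\dot\pi(g)v\in W$ and $\dot\pi^v(g)=\exp_W\inv(\pi^n(g))$. Since $\pi^n$ is smooth and maps $U_G$ into $\exp_N(W)$, on which $\exp_W\inv$ is smooth, $\dot\pi^v$ is smooth near $1_G$. Finally, for any $g_0\in G$ the equivariance identity $\dot\pi^v(g)=\dot\pi(g_0)\bigl(\dot\pi^v(g_0\inv g)\bigr)$, combined with smoothness of the left translation $\lambda_{g_0\inv}$ in the Banach--Lie group $G$ and of the topological isomorphism $\dot\pi(g_0)\in\GL(\n)$, shows that $\dot\pi^v$ is smooth near $g_0$; hence $\dot\pi^v$ is smooth on $G$, i.e. $v\in\n^{\infty}$, so $n\in\exp_N(V^{\infty})$.

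The only step that is not purely formal is the propagation of smoothness of $\dot\pi^v$ from a neighbourhood of $1_G$ to all of $G$; this is precisely what the cocycle-type identity $\dot\pi^v(g_0h)=\dot\pi(g_0)\dot\pi^v(h)$ achieves, and it works because each $\dot\pi(g_0)$ is continuous linear — the same mechanism already used in the proof of Lemma~\ref{lemma:continuity_Tpi}. Everything else reduces to naturality of $\exp_N$, injectivity of $\exp_W$, and continuity of $\pi^{\wedge}$.
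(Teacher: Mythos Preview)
Your approach is essentially the paper's, but there is a gap in the reverse inclusion of (2). From $\pi(g)n=\exp_N(\dot\pi(g)v)\in\exp_N(W)$ you cannot conclude $\dot\pi(g)v\in W$ by ``injectivity of $\exp_W$'': injectivity of $\exp_N|_W$ only says that $\exp_N$ is one-to-one \emph{on} $W$, not that every $\exp_N$-preimage of a point in $\exp_N(W)$ lies in $W$. In a general Banach--Lie group $\exp_N$ need not be globally injective, so the inclusion $\exp_N(\dot\pi(g)v)\in\exp_N(W)$ by itself does not force $\dot\pi(g)v\in W$, and your identity $\dot\pi^v(g)=\exp_W\inv(\pi^n(g))$ is not yet justified.

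The fix is short: since $\dot\pi^v(1_G)=v\in W$ and $\dot\pi^{\wedge}$ is continuous (Lemma~\ref{lemma:continuity_Tpi}), shrink $U_G$ so that $\dot\pi(U_G)v\subseteq W$; then the identity holds on $U_G$ and the rest of your argument goes through. The paper organises this slightly differently: it first invokes Lemma~\ref{lemma:naturality_exp} --- whose proof contains exactly this missing step, carried out uniformly in $v$ via a one-parameter-group argument --- to produce $V$ and a single $U_G$ with $(\exp_V)\inv(\pi(g)\exp_N(v))=\dot\pi(g)v$ for all $g\in U_G$ and $v\in V$, and only afterwards shrinks $V$ to satisfy (1). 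With either repair, your proof is correct and coincides with the paper's.
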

\begin{proof}
By Lemma~\ref{lemma:naturality_exp}, there exist open (connected, symmetric) neighbourhoods $U_G$ of $1_G$ in $G$ and $V$ of $0$ in $\n$ such that $\exp_V:=\exp_N|_V$ is a diffeomorphism onto $\exp_V(V)$ and such that  $$(\exp_{V})\inv\circ \pi^{\exp_N(v)}(g)=\dot{\pi}^v(g) \quad\textrm{for all $g\in U_G$ and $v\in V$.}$$ Up to schrinking $V$, we may moreover assume that (1) is satisfied.

If $v\in \n^{\infty}$, so that $\dot{\pi}^v$ is smooth, then $\pi^{\exp_N(v)}=\exp_N\circ \dot{\pi}^v$ is smooth (because $\exp_N$ is smooth), showing that $\exp_N(v)\in N^{\infty}$. Conversely, if $\exp_N(v)\in N^{\infty}$ for some $v\in V$, so that $\pi^{\exp_N(v)}$ is smooth, then $\dot{\pi}^v|_{U_G}=(\exp_{V})\inv\circ \pi^{\exp_N(v)}|_{U_G}$ is smooth, and hence also $\dot{\pi}^v$ (because $\dot{\pi}^v\circ\lambda_g=\dot{\pi}(g)\circ \dot{\pi}^v$ for all $g\in G$). Thus (2) is also satisfied.
\end{proof}

Let $V$ and $V^{\infty}$ be as in the statement of Lemma~\ref{lemma:nNinfty}. We also fix some open identity neighbourhood $U_G\subseteq G$ and some open (connected, symmetric) $0$-neighbourhood $V_1\subseteq V$ such that $\dot{\pi}(U_G)V_1\subseteq V$ (see Lemma~\ref{lemma:continuity_Tpi}). Up to schrinking $U_G$, we moreover assume that there is some open $0$-neighbourhood $V_G\subseteq \g$ such that $\exp_{V_G}:=\exp_G|_{V_G}$ is a diffeomorphism onto $U_G=\exp_G(V_G)$. Finally, we set $V_1^{\infty}:=V_1\cap V^{\infty}$. Note that, since the topology on $\n^{\infty}$ is finer than the topology on $\n$, the inclusion map $\iota\co \n^{\infty}\hookrightarrow \n$ is smooth. In particular, the sets $V^{\infty}$ and $V_1^{\infty}$ are open in $\n^{\infty}$. 

\begin{lemma}\label{lemma:nlocalLiegr}
The quadruple $(\n^{\infty},V_1^{\infty}\times V_1^{\infty},*,0)$ is a local Lie group, where
 $*\co V_1^{\infty}\times V_1^{\infty}\to \n^{\infty}$ is the local multiplication of $\n^{\infty}$.
\end{lemma}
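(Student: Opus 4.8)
The plan is to verify the defining conditions of a local Lie group from \S2.7 for the quadruple $(\n^{\infty},V_1^{\infty}\times V_1^{\infty},*,0)$. First I would check the purely algebraic/local-group axioms: the set $V_1^{\infty}$ is symmetric (being the intersection of the symmetric sets $V_1$ and $V^\infty$, each symmetric because $V$ and $\n^{\infty}$ are, and inversion on the Banach--Lie group $N$ corresponds to $x\mapsto -x$ on the exponential chart), it contains $0$, and $V_1^{\infty}\times V_1^{\infty}$ is open in $\n^\infty\times\n^\infty$ since $V_1^{\infty}$ is open in $\n^{\infty}$ (as noted just before the statement, using that $\iota\co\n^{\infty}\hookrightarrow\n$ is smooth). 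The inclusions $(V_1^{\infty}\times\{0\})\cup(\{0\}\times V_1^{\infty})\subseteq V_1^{\infty}\times V_1^{\infty}$ are trivial, and $0*x=x*0=x$. The one genuinely non-formal algebraic point is that $x*y\in\n^{\infty}$ for $x,y\in V_1^{\infty}$: here I would invoke Lemma~\ref{lemma:nNinfty}(1)--(2). Since $x,y\in V_1^{\infty}\subseteq V^{\infty}$, we have $\exp_N(x),\exp_N(y)\in N^{\infty}$ by (2), and $N^{\infty}$ is a subgroup of $N$ (the product of two elements with smooth orbit maps has a smooth orbit map, since $\pi(g)(n_1n_2)=(\pi(g)n_1)(\pi(g)n_2)$ and multiplication in $N$ is smooth), so $\exp_N(x)\exp_N(y)\in N^{\infty}\cap\exp_N(W)$; shrinking $W$ in (1) compatibly with (2) one concludes $x*y=\exp_W^{-1}(\exp_N(x)\exp_N(y))\in V^{\infty}\subseteq\n^{\infty}$. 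Similarly $x^{-1}=-x\in V_1^{\infty}$.

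Next comes the smoothness part, which is the main obstacle. I must show that $*\co V_1^{\infty}\times V_1^{\infty}\to\n^{\infty}$ and inversion $x\mapsto -x$ are smooth \emph{for the Fr\'echet topology on $\n^{\infty}$}. Inversion is linear and continuous, hence smooth, so the real content is smoothness of $*$. The tool for this is Lemma~\ref{lemma:corestriction_smoothbis} applied with $E=\n^{\infty}\times\n^{\infty}$, $U=V_1^{\infty}\times V_1^{\infty}$, $F=\n^{\infty}$, $F_B=\n$, and $h=*$. By that lemma it suffices to exhibit an identity neighbourhood $U_G'\subseteq G$ such that
\[
f\co U_G'\times(V_1^{\infty}\times V_1^{\infty})\to\n,\quad (g,x,y)\mapsto\dot\pi(g)(x*y)
\]
is smooth. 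The key identity is that, since $\dot\pi(g)$ integrates the automorphism $\pi(g)$ and the exponential chart is natural, $\dot\pi(g)(x*y)=(\dot\pi(g)x)*(\dot\pi(g)y)$ whenever all terms stay in the chart: indeed $\exp_N(\dot\pi(g)(x*y))=\pi(g)(\exp_N(x)\exp_N(y))=\pi(g)\exp_N(x)\cdot\pi(g)\exp_N(y)=\exp_N(\dot\pi(g)x)\exp_N(\dot\pi(g)y)$. Choosing $U_G'\subseteq U_G$ small enough (using Lemma~\ref{lemma:continuity_Tpi}, i.e.\ continuity of $\dot\pi^{\wedge}$, together with Lemma~\ref{lemma:naturality_exp}) so that $\dot\pi(U_G')V_1\subseteq V_1'$ for a neighbourhood $V_1'$ on which $*$ is still defined and valued in $V$, we get
\[
f(g,x,y)=\bigl(\dot\pi^{\wedge}(g,x)\bigr)*\bigl(\dot\pi^{\wedge}(g,y)\bigr),
\]
a composition of the smooth map $\dot\pi^{\wedge}_{\infty}\co G\times\n^{\infty}\to\n^{\infty}$ (smooth by \cite[Thm.~4.4, Prop.~5.4]{Ne10b}, then composed with the smooth inclusion $\n^{\infty}\hookrightarrow\n$) with the smooth Banach local multiplication $*\co V_1'\times V_1'\to\n$. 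Hence $f$ is smooth, and Lemma~\ref{lemma:corestriction_smoothbis} yields smoothness of $*$ as an $\n^{\infty}$-valued map.

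I expect the delicate bookkeeping to be the compatibility of the various neighbourhoods: one must simultaneously arrange that $\exp_W$ is a diffeomorphism with $\exp_N(V)^2\subseteq\exp_N(W)$ and $\exp_N(V^{\infty})=N^{\infty}\cap\exp_N(V)$ (Lemma~\ref{lemma:nNinfty}), that $\dot\pi(U_G)V_1\subseteq V$ (Lemma~\ref{lemma:continuity_Tpi}), \emph{and} that after applying $\dot\pi(g)$ for $g$ in a possibly smaller $U_G'$ the products $(\dot\pi(g)x)*(\dot\pi(g)y)$ remain inside the domain where the identity $f(g,x,y)=(\dot\pi(g)x)*(\dot\pi(g)y)$ is literally valid, all while keeping $V_1^{\infty}$ a fixed open identity neighbourhood in $\n^{\infty}$. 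This is routine shrinking, but it must be done carefully so that Lemma~\ref{lemma:corestriction_smoothbis} applies on the nose. Once smoothness of $*$ and of inversion is established, together with the already-verified local-group axioms, the quadruple $(\n^{\infty},V_1^{\infty}\times V_1^{\infty},*,0)$ is by definition a local Lie group, completing the proof.
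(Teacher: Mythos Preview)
Your approach is essentially the same as the paper's: reduce smoothness of $*$ to Lemma~\ref{lemma:corestriction_smoothbis} via the identity $\dot\pi(g)(x*y)=(\dot\pi(g)x)*(\dot\pi(g)y)$, then factor $f$ through the smooth action $\dot\pi^{\wedge}_{\infty}$, the inclusion $\n^{\infty}\hookrightarrow\n$, and the Banach local multiplication. Two minor simplifications: first, no further shrinking to a $U_G'$ is needed, since the standing hypotheses already give $\dot\pi(U_G)V_1\subseteq V$ and $*$ is defined on all of $V\times V$, so the paper works directly with $U_G$; second, the separate verification that $x*y\in\n^{\infty}$ can be omitted, as smoothness of $g\mapsto\dot\pi(g)(x*y)$ on $U_G$ (which you establish) already shows $x*y$ is a smooth vector.
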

\begin{proof}
Since the local inversion $\n^{\infty}\to \n^{\infty},x\to -x$ is clearly smooth, we only have to show that the local multiplication $*$ is smooth. By Lemma~\ref{lemma:corestriction_smoothbis}, it is sufficient to show that the map
$$f\co U_G\times (V_1^{\infty}\times V_1^{\infty})\to\n, \ (g,x,y)\mapsto \dot{\pi}^{x*y}(g)=\dot{\pi}(g)x*\dot{\pi}(g)y$$
is smooth. But $f$ is the composition of the smooth action map $$U_G\times (V_1^{\infty}\times V_1^{\infty})\to V^{\infty}\times V^{\infty}, \ (g,x,y)\mapsto (\dot{\pi}^{\wedge}(g,x),\dot{\pi}^{\wedge}(g,y)),$$ the smooth inclusion map $V^{\infty}\times V^{\infty}\hookrightarrow V\times V$ and the local multiplication map $V\times V\to\n,(x,y)\mapsto x*y$ in $\n$, yielding the claim.
\end{proof}

\begin{lemma}\label{lemma:Ad_smooth}
For each $n\in N^{\infty}$, the map $\Ad(n)\co \n^{\infty}\to\n^{\infty}$ is smooth.
\end{lemma}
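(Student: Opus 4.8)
The plan is to reduce the smoothness of $\Ad(n)\co \n^\infty\to\n^\infty$ to an application of Lemma~\ref{lemma:corestriction_smoothbis}, exactly as was done for the local multiplication in Lemma~\ref{lemma:nlocalLiegr}. So the goal is to find an identity neighbourhood $U_G'\subseteq G$ such that the map
\[
f\co U_G'\times\n^\infty\to\n,\qquad (g,v)\mapsto \dot\pi^{\Ad(n)v}(g)=\dot\pi(g)\big(\Ad(n)v\big)
\]
is smooth; then Lemma~\ref{lemma:corestriction_smoothbis} (with $E=\n^\infty$, $U=\n^\infty$, $h=\Ad(n)$) gives that $\Ad(n)$ is smooth as a map into $\n^\infty$. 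First I would observe that, since $n\in N^\infty$, the orbit map $\pi^n\co G\to N$ is smooth, and likewise $\pi^{n\inv}$ is smooth (because $n\inv\in N^\infty$: $\pi^{n\inv}(g)=\pi(g)(n\inv)=(\pi(g)n)\inv$ is the composition of $\pi^n$ with the smooth inversion of $N$).

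The key computation is the cocycle-type identity expressing $\dot\pi(g)\Ad(n)v$ in terms of quantities that are visibly smooth in $g$. Using naturality of $\Ad$ under the automorphisms $\pi(g)$, namely $\dot\pi(g)\circ\Ad(m)=\Ad(\pi(g)m)\circ\dot\pi(g)$ for $m\in N$, one gets
\[
\dot\pi(g)\big(\Ad(n)v\big)=\Ad\big(\pi(g)n\big)\big(\dot\pi(g)v\big)=\Ad\big(\pi^n(g)\big)\big(\dot\pi^{\wedge}(g,v)\big).
\]
Now the right-hand side is the composition of: the smooth map $g\mapsto\pi^n(g)$ into $N$ followed by the smooth adjoint action map $N\times\n\to\n,(m,w)\mapsto\Ad(m)w$, paired with the smooth restricted action map $\dot\pi^{\wedge}_\infty\co G\times\n^\infty\to\n^\infty$ (from \cite[Theorem~4.4, Prop.~5.4]{Ne10b}, quoted before Lemma~\ref{lemma:technical_le_Gl}) composed with the smooth inclusion $\n^\infty\hookrightarrow\n$. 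Hence $f(g,v)=\Ad(\pi^n(g))\,\dot\pi^{\wedge}_\infty(g,v)$ is smooth on all of $G\times\n^\infty$ — so in fact any $U_G'$ works. Lemma~\ref{lemma:corestriction_smoothbis} then yields that $\Ad(n)\co\n^\infty\to\n^\infty$ is smooth.

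There is one point to nail down: that $\Ad(n)$ actually maps $\n^\infty$ into $\n^\infty$ in the first place, so that the statement makes sense. This follows from the same identity: for $v\in\n^\infty$, the orbit map $g\mapsto\dot\pi(g)(\Ad(n)v)=\Ad(\pi^n(g))\dot\pi(g)v$ is smooth (composition of the smooth $\pi^n$, the smooth $\Ad$ of $N$, and the smooth orbit map $\dot\pi^v$ of the $C^\infty$-vector $v$), so $\Ad(n)v\in\n^\infty$. The main (very mild) obstacle is simply checking the naturality identity $\dot\pi(g)\circ\Ad(n)=\Ad(\pi(g)n)\circ\dot\pi(g)$; this is immediate from differentiating $\pi(g)(n m n\inv)=(\pi(g)n)(\pi(g)m)(\pi(g)n)\inv$ at $m=1$, using that $\pi(g)$ is a Lie group automorphism of $N$ and $\Lie(\pi(g))=\dot\pi(g)$. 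Everything else is bookkeeping with the smoothness results already recorded.
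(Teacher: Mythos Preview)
Your proposal is correct and follows essentially the same approach as the paper: establish the naturality identity $\dot\pi(g)\circ\Ad(n)=\Ad(\pi(g)n)\circ\dot\pi(g)$, use it to see that $\Ad(n)\n^\infty\subseteq\n^\infty$, then show the map $(g,v)\mapsto\Ad(\pi^n(g))\dot\pi(g)v$ is smooth as a composition of $\pi^n$, $\dot\pi^\wedge_\infty$, the inclusion $\iota$, and the adjoint action of $N$, and finally invoke Lemma~\ref{lemma:corestriction_smoothbis}. (Your remark about $\pi^{n\inv}$ is correct but unused; otherwise the argument matches the paper's proof line for line.)
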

\begin{proof}
Let $n\in N^{\infty}$. For all $g\in G$ and $v\in\n^{\infty}$, we have
\begin{equation*}
\begin{aligned}
\dot{\pi}(g)(\Ad(n)v)&=\frac{d}{dt}\Big|_{t=0}\pi(g)(n\exp_N(tv)n\inv)
=\frac{d}{dt}\Big|_{t=0}\Int(\pi(g)n)\pi(g)(\exp_N(tv))\\
&=\Ad(\pi(g)n)\dot{\pi}(g)v.
\end{aligned}
\end{equation*}
In particular, $\Ad(n)\n^{\infty}\subseteq\n^{\infty}$. Moreover, the map $$f\co U_G\times \n^{\infty}\to\n, \ (g,v)\mapsto \dot{\pi}^{\Ad(n)v}(g)=\Ad(\pi(g)n)\dot{\pi}(g)v$$
is smooth, as it is the composition of the smooth maps $$U_G\times \n^{\infty}\to N\times \n, \ (g,v)\mapsto (\pi^n(g),\iota\circ\dot{\pi}^{\wedge}(g,v))$$ and $\Ad\co N\times\n\to\n$. 
We may then apply Lemma~\ref{lemma:corestriction_smoothbis} to conclude that the map $\Ad(n)\co \n^{\infty}\to\n^{\infty}$ is smooth as well, as desired.
\end{proof}

\begin{theorem}\label{thm:Ninfty_Frechet}
The group $N^{\infty}$ has a unique Fr\'echet--Lie group structure for which $\exp_N\co \n^{\infty}\to N^{\infty}$ is a local diffeomorphism. 
\end{theorem}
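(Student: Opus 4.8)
The plan is to upgrade the \emph{local} Lie group $(\n^{\infty},V_1^{\infty}\times V_1^{\infty},*,0)$ of Lemma~\ref{lemma:nlocalLiegr} to a \emph{global} Fr\'echet--Lie group structure on $N^{\infty}$, transported through $\exp_N$, by the standard procedure for passing from a local Lie group to a global one (cf.\ \cite{NeJap}, \cite{GlNe16}). First I would record the elementary fact that $N^{\infty}$ is a subgroup of $N$: if $\pi^{n_1},\pi^{n_2}$ are smooth, then $\pi^{n_1n_2}=\pi^{n_1}\cdot\pi^{n_2}$ and $\pi^{n_1\inv}=(\pi^{n_1})\inv$ are smooth as well, using smoothness of the group operations of the Banach--Lie group $N$; and by (the proof of) Lemma~\ref{lemma:nNinfty} one has $\exp_N(\n^{\infty})\subseteq N^{\infty}$ and $\mathcal{U}:=\exp_N(V_1^{\infty})=N^{\infty}\cap\exp_N(V_1)$, with $\exp_N|_{V_1^{\infty}}\co V_1^{\infty}\to\mathcal{U}$ a bijection carrying $*$ to the restriction of the multiplication of $N^{\infty}$.

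Next I would build the manifold structure from the atlas $\{n\mathcal{U}\ |\ n\in N^{\infty}\}$, with charts $n\exp_N(v)\mapsto v\in V_1^{\infty}$. Near any point of an overlap $n_1\mathcal{U}\cap n_2\mathcal{U}$ the transition map has the form $v\mapsto w_0*\big((-v_0)*v\big)$ for suitable $v_0,w_0\in V_1^{\infty}$ (obtained from the identity $\exp_N(w_0)\exp_N((-v_0)*v)=(n_2\inv n_1)\exp_N(v)$), and is therefore smooth because $*$ is smooth by Lemma~\ref{lemma:nlocalLiegr}. This equips $N^{\infty}$ with a Fr\'echet manifold structure modelled on $\n^{\infty}$ for which $\exp_N|_{V_1^{\infty}}$ is a chart, hence $\exp_N$ is a local diffeomorphism at $0$ and, by translating charts, a local diffeomorphism onto a neighbourhood of every point of its image. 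One then checks smoothness of the group operations in these charts: near a base point $(g_0,h_0)$ multiplication reads $(v,w)\mapsto\big(\Ad(h_0\inv)v\big)*w$, using $g_0\exp_N(v)\cdot h_0\exp_N(w)=g_0h_0\exp_N(\Ad(h_0\inv)v)\exp_N(w)$ together with naturality $\Int(h_0\inv)\circ\exp_N=\exp_N\circ\Ad(h_0\inv)$; this is smooth since $\Ad(h_0\inv)\co\n^{\infty}\to\n^{\infty}$ is smooth by Lemma~\ref{lemma:Ad_smooth} and $*$ is smooth. Similarly inversion reads $v\mapsto-\Ad(g_0)v$ near $g_0$, again smooth by Lemma~\ref{lemma:Ad_smooth}. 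Finally one verifies that this manifold topology is Hausdorff, so that $N^{\infty}$ becomes a Fr\'echet--Lie group.

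For uniqueness, suppose $N^{\infty}$ carries two Lie group structures for each of which $\exp_N$ is a local diffeomorphism. Then the germs of chart at $1_N$ obtained by inverting $\exp_N$ agree, so the identity map between the two structures is a diffeomorphism near $1_N$; since left translations are diffeomorphisms in any Lie group, conjugating by them shows the identity map is a diffeomorphism near every point, whence the two structures coincide. I expect the only genuinely delicate point to be the bookkeeping needed to see that the transition maps and the multiplication and inversion maps, once written in these charts, really are smooth maps between open subsets of $\n^{\infty}$ (and not merely of $\n$); but this is precisely what Lemmas~\ref{lemma:corestriction_smoothbis}, \ref{lemma:nlocalLiegr} and~\ref{lemma:Ad_smooth} were set up to provide, so no essential obstacle remains beyond assembling these ingredients.
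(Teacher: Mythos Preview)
Your proposal is correct and follows essentially the same route as the paper: both arguments rest on Lemma~\ref{lemma:nNinfty} to identify $\exp_N(V_1^{\infty})$ with $N^{\infty}\cap\exp_N(V_1)$, on Lemma~\ref{lemma:nlocalLiegr} for the smoothness of the local multiplication $*$ on $\n^{\infty}$, and on Lemma~\ref{lemma:Ad_smooth} for the smoothness of conjugation. The only difference is packaging: the paper verifies the hypotheses of \cite[Theorem~II.2.1]{NeJap} (local Lie group structure on an identity neighbourhood plus smoothness of each $\Int(n)$ near~$1$) and invokes that theorem as a black box to obtain both existence and uniqueness, whereas you unfold that theorem's proof by hand---building the atlas of left translates, checking transition maps, and verifying multiplication and inversion in charts. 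Your explicit construction is exactly what \cite[Theorem~II.2.1]{NeJap} does in general, so the two proofs are the same in substance; the paper's version is shorter, yours is more self-contained.
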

\begin{proof}
By Lemma~\ref{lemma:nlocalLiegr}, the quadruple $(V^{\infty},V_1^{\infty}\times V_1^{\infty},*,0)$ is a local Fr\'echet--Lie group. 
Moreover, by Lemma~\ref{lemma:nNinfty}, the restriction of $\exp_N$ to $V^{\infty}$ yields an injective morphism $\exp_{V^{\infty}}\co V^{\infty}\to N^{\infty}$ of local groups. Set $U:=\exp_{V^{\infty}}(V^{\infty})\subseteq N^{\infty}$. Note that $U=U\inv$ as $V^{\infty}$ is symmetric. We equip $U$ with the smooth Fr\'echet manifold structure coming from $V^{\infty}$, that is, such that $\exp_{V^{\infty}}\co V^{\infty}\to U$ is a diffeomorphism. This turns $U$ into a local Lie group, with respect to the multiplication and neutral element in $N^{\infty}$.

For each $n\in N^{\infty}$, the conjugation map
$$\Int(n)\co \exp_N(V_1^{\infty})\to N^{\infty}, \ \exp_N(v)\mapsto n\exp_N(v)n\inv=\exp_N(\Ad(n)v)$$
is smooth by Lemma~\ref{lemma:Ad_smooth}, and hence there exists some open symmetric identity neighbourhood $U_n\subseteq U$ such that $\Int(n)U_n\subseteq U$ and such that $\Int(n)\co U_n\to U$ is smooth.
It then follows from \cite[Theorem~II.2.1]{NeJap} that there is a unique Lie group structure on $N^{\infty}$ for which $\exp_N\co \n^{\infty}\to N^{\infty}$ is a local diffeomorphism.
\end{proof}

We now show that the induced $G$-action $$\pi_{\infty}\co G\to \Aut(N^{\infty}), \ g\mapsto \pi(g)|_{N^{\infty}}$$ on the Fr\'echet--Lie group $N^{\infty}$ is smooth.

\begin{theorem}
The action map $\pi^{\wedge}_{\infty}\co G\times N^{\infty}\to N^{\infty},(g,n)\mapsto \pi(g)n$ is smooth.
\end{theorem}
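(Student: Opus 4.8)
The plan is to verify smoothness of $\pi^{\wedge}_{\infty}$ locally, working in the exponential charts of $N^{\infty}$ supplied by Theorem~\ref{thm:Ninfty_Frechet}, and to reduce the (awkward) $\n^{\infty}$-valued smoothness to an $\n$-valued statement via Lemma~\ref{lemma:corestriction_smoothbis}. Fix $(g_0,n_0)\in G\times N^{\infty}$ and set $m_0:=\pi(g_0)n_0$. Since $h\mapsto\pi(h)m_0=\pi(hg_0)n_0=\pi^{n_0}(hg_0)$ is smooth, we have $m_0\in N^{\infty}$; because $\exp_N\co\n^{\infty}\to N^{\infty}$ is a local diffeomorphism and left translations in $N^{\infty}$ are smooth, $w\mapsto m_0\exp_N(w)$ is a chart of $N^{\infty}$ around $m_0$, $v\mapsto n_0\exp_N(v)$ is a chart around $n_0$, and $y\mapsto g_0\exp_G(y)$ is a chart of $G$ around $g_0$. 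In these charts, $\pi^{\wedge}_{\infty}$ is represented near $(g_0,n_0)$ by
$$h\co (y,v)\mapsto \exp_N\inv\big(m_0\inv\cdot\pi(g_0\exp_G(y))(n_0\exp_N(v))\big),$$
defined on a small open neighbourhood $U$ of $(0,0)$ in $\g\times\n^{\infty}$ (small enough that $y\in V_G$, $v\in V^{\infty}$, and the argument of $\exp_N\inv$ lies in $\exp_N(V)$, which is possible by continuity of $\pi^{\wedge}$); since this argument is a product of elements of the group $N^{\infty}$ and lies in $\exp_N(V)$, it lies in $\exp_N(V^{\infty})$ by Lemma~\ref{lemma:nNinfty}(2), so $h$ indeed takes values in $\n^{\infty}$. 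It thus suffices to show $h$ is smooth.

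Next I apply Lemma~\ref{lemma:corestriction_smoothbis} with $E:=\g\times\n^{\infty}$, $F:=\n^{\infty}$, $F_B:=\n$: I must exhibit an identity neighbourhood $\widehat{U}_G\subseteq G$ so that $(g,y,v)\mapsto\dot{\pi}(g)h(y,v)$ is smooth on $\widehat{U}_G\times U\to\n$. Using that each $\pi(g)$ is an automorphism of $N$ together with naturality of the exponential function, $\pi(g)\exp_N(w)=\exp_N(\dot{\pi}(g)w)$, one computes
\begin{align*}
\exp_N\big(\dot{\pi}(g)h(y,v)\big)&=\pi(g)\big(m_0\inv\cdot\pi(g_0\exp_G y)(n_0\exp_N v)\big)\\
&=\pi^{n_0}(gg_0)\inv\cdot\pi^{n_0}(gg_0\exp_G y)\cdot\exp_N\big(\dot{\pi}(gg_0\exp_G y)v\big),
\end{align*}
where we used $\pi(g)(m_0)=\pi(gg_0)n_0=\pi^{n_0}(gg_0)$ and $\pi(gg_0\exp_G y)(n_0\exp_N v)=\pi^{n_0}(gg_0\exp_G y)\cdot\exp_N(\dot{\pi}(gg_0\exp_G y)v)$. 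The three factors on the right-hand side are smooth maps of $(g,y,v)\in G\times\g\times\n^{\infty}$ into $N$: the first two because $\pi^{n_0}\co G\to N$ is smooth by definition of $N^{\infty}$ (precomposed with the smooth maps $g\mapsto gg_0$, resp.\ $g\mapsto gg_0\exp_G y$), and the third because $\dot{\pi}^{\wedge}_{\infty}\co G\times\n^{\infty}\to\n^{\infty}$ is smooth by \cite{Ne10b}, so that $(g,v)\mapsto\exp_N(\dot{\pi}(g)v)$ is smooth $G\times\n^{\infty}\to N$ (post-composing with the smooth inclusion $\n^{\infty}\hookrightarrow\n$ and with $\exp_N$). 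Hence $(g,y,v)\mapsto\pi^{n_0}(gg_0)\inv\pi^{n_0}(gg_0\exp_G y)\exp_N(\dot{\pi}(gg_0\exp_G y)v)$ is smooth into $N$; its value at $(1_G,0,0)$ is $1_N$, so on a suitable neighbourhood $\widehat{U}_G\times U$ it takes values in $\exp_N(V)$, and composing with $\exp_N\inv$ (smooth there) shows $(g,y,v)\mapsto\dot{\pi}(g)h(y,v)$ is smooth into $\n$. Lemma~\ref{lemma:corestriction_smoothbis} then yields that $h$ is smooth, whence $\pi^{\wedge}_{\infty}$ is smooth near $(g_0,n_0)$; as $(g_0,n_0)$ was arbitrary, $\pi^{\wedge}_{\infty}$ is smooth.

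The main obstacle here is precisely that $N^{\infty}$ carries the initial topology with respect to $\Psi\co\n^{\infty}\to\prod_k\Mult^k(\g,\n)$, for which verifying smoothness of $\n^{\infty}$-valued maps by direct estimates is unpleasant; the point of Lemma~\ref{lemma:corestriction_smoothbis} is to trade this for the smoothness of the single $\n$-valued map $(g,\cdot)\mapsto\dot{\pi}(g)h(\cdot)$, and the only genuine computation is the algebraic identity above, which re-expresses $\pi(g)h(y,v)$ through the honestly smooth orbit map $\pi^{n_0}\co G\to N$ and the smooth derived action $\dot{\pi}^{\wedge}_{\infty}$. A secondary, purely bookkeeping point is to keep everything inside the relevant chart domains, which is handled by continuity (via Lemma~\ref{lemma:continuity_Tpi} and Lemma~\ref{lemma:nNinfty}) by shrinking $\widehat{U}_G$ and $U$ as needed.
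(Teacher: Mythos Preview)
Your proof is correct and follows essentially the same strategy as the paper: reduce the $\n^{\infty}$-valued smoothness to an $\n$-valued statement via Lemma~\ref{lemma:corestriction_smoothbis}, and then express the relevant map through the smooth orbit map $\pi^{n_0}\co G\to N$ and the smooth derived action $\dot{\pi}^{\wedge}_{\infty}$. The only difference is organizational: the paper first proves the intermediate claim that each orbit map $\pi_{\infty}^{n}\co G\to N^{\infty}$ is smooth, then shows smoothness of $\pi^{\wedge}_{\infty}$ on $G\times\exp_N(V^{\infty})$, and finally combines these via the multiplication in $N^{\infty}$, whereas you fold all of this into a single application of Lemma~\ref{lemma:corestriction_smoothbis} at an arbitrary basepoint $(g_0,n_0)$.
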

\begin{proof}
We first claim that for any $n\in N^{\infty}$, the orbit map $\pi_{\infty}^n\co G\to N^{\infty},g\mapsto \pi(g)n$ is smooth. Indeed, let $n\in N^{\infty}$ and consider the smooth map $h_n:=\lambda_{n\inv}\circ\pi^n\co G\to N$. Since $h_n(1_G)=1_N$, there exists some open $0$-neighbourhood $W_G\subseteq V_G$ in $\g$ such that $$\pi(\exp_G(W_G))h_n(\exp_G(W_G))\subseteq \exp_N(V)\cap N^{\infty}=\exp_N(V^{\infty})$$ (see Lemma~\ref{lemma:nNinfty}). Since $\pi_{\infty}^n\circ\lambda_g=\pi(g)\circ\pi_{\infty}^n$ for all $g\in G$, it is sufficient to prove that $\pi_{\infty}^n$ is smooth on $\exp_G(W_G)$, or equivalently, that the map $$\widetilde{h}_n\co W_G\to \n^{\infty}, \ x\mapsto \exp_V\inv(h_n(\exp_G(x)))$$ is smooth. But since $h_n$ is smooth, the map $f\co \exp_G(W_G)\times W_G\to \n$ defined by
$$f(g,x):=\dot{\pi}^{\widetilde{h}_n(x)}(g)=\dot{\pi}(g)\exp_V\inv(n\inv\cdot\pi(\exp_G(x))n)=\exp_V\inv(h_n(g)\inv\cdot h_n(g\exp_G(x)))$$
is smooth as well, so that the claim follows from Lemma~\ref{lemma:corestriction_smoothbis}.

On the other hand, the restriction of $\pi^{\wedge}_{\infty}$ to the open subset $G\times \exp_N(V^{\infty})$ of $G\times N^{\infty}$ is smooth, since it is the composition of the smooth maps $$\id_{G}\times \exp_V\inv\co G\times \exp_N(V^{\infty})\to G\times V^{\infty},\quad\dot{\pi}^{\wedge}_{\infty}\co G\times \n^{\infty}\to \n^{\infty}\quad\textrm{and}\quad\exp_N\co\n^{\infty}\to N^{\infty}.$$ Given $n_0\in N^{\infty}$, it now remains to prove the smoothness of the map $$G\times n_0\exp_N(V^{\infty})\to N^{\infty}, \ (g,n_0n)\mapsto \pi^{\wedge}_{\infty}(g,n_0n)=\pi(g)n_0\cdot\pi(g)n.$$ But this follows from the smoothness of the maps $G\times n_0\exp_N(V^{\infty})\to G\times \exp_N(V^{\infty}),(g,n_0n)\mapsto (g,n)$, $$G\times \exp_N(V^{\infty})\to N^{\infty}\times N^{\infty}, \ (g,n)\mapsto (\pi_{\infty}^{n_0}(g),\pi^{\wedge}_{\infty}(g,n))$$ and $N^{\infty}\times N^{\infty}\to N^{\infty},(n_1,n_2)\mapsto n_1n_2$.
\end{proof}

\begin{corollary}
The group $H^{\infty}:=N^{\infty}\rtimes_{\pi_{\infty}}G$ has a canonical Fr\'echet--Lie group structure extending the Lie group structures on $G$ and $N^{\infty}$.
\end{corollary}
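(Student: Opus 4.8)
The plan is to endow the underlying set $N^{\infty}\times G$ with the product Fr\'echet manifold structure and to verify that the semidirect product group operations are smooth for this structure; the whole argument then reduces to the smoothness of $\pi^{\wedge}_{\infty}$ established in the preceding theorem, together with the Lie group structures on $N^{\infty}$ (Theorem~\ref{thm:Ninfty_Frechet}) and on the Banach--Lie group $G$.

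First I would observe that, since $\g$ is in particular a Fr\'echet space and $\n^{\infty}$ is a Fr\'echet space, the product $\n^{\infty}\times\g$ is again a Fr\'echet space; hence $N^{\infty}\times G$ carries a natural Fr\'echet manifold structure modelled on $\n^{\infty}\times\g$, which is the manifold structure we put on $H^{\infty}$. Next I would check the smoothness of the multiplication
\[m\co H^{\infty}\times H^{\infty}\to H^{\infty}, \ \big((n_1,g_1),(n_2,g_2)\big)\mapsto \big(n_1\cdot\pi_{\infty}(g_1)n_2,\ g_1g_2\big)\]
coordinatewise. Its $G$-component is the multiplication of $G$, which is smooth. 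Its $N^{\infty}$-component is the composition of the map $\big((n_1,g_1),(n_2,g_2)\big)\mapsto\big(n_1,\pi^{\wedge}_{\infty}(g_1,n_2)\big)$, which is smooth because $\pi^{\wedge}_{\infty}\co G\times N^{\infty}\to N^{\infty}$ is smooth and the projections are smooth, with the multiplication $N^{\infty}\times N^{\infty}\to N^{\infty}$, which is smooth since $N^{\infty}$ is a Lie group. Similarly, the inversion $(n,g)\mapsto\big(\pi_{\infty}(g\inv)n\inv,g\inv\big)$ has smooth $G$-component (inversion in $G$), and its $N^{\infty}$-component factors as $(n,g)\mapsto(g\inv,n\inv)\mapsto\pi^{\wedge}_{\infty}(g\inv,n\inv)$, a composition of the smooth inversions of $G$ and $N^{\infty}$ with $\pi^{\wedge}_{\infty}$. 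Since finite products and compositions of smooth maps between locally convex manifolds are smooth, it follows that $H^{\infty}$ is a Fr\'echet--Lie group.

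Finally I would note that this structure extends those of $G$ and $N^{\infty}$: the closed subgroups $N^{\infty}\times\{1_G\}$ and $\{1_{N^{\infty}}\}\times G$ of $H^{\infty}$, equipped with their submanifold structures, are Lie groups isomorphic to $N^{\infty}$ and $G$ respectively, and the corresponding inclusions into $H^{\infty}$ are morphisms of Lie groups; moreover the structure is canonical, being the unique Fr\'echet--Lie group structure on the group $N^{\infty}\rtimes_{\pi_{\infty}}G$ for which these inclusions are Lie group morphisms and $N^{\infty}\times\{1_G\}$ is an embedded submanifold. I do not expect any real obstacle here: the two substantial inputs---the Fr\'echet--Lie group structure on $N^{\infty}$ and the smoothness of $\pi^{\wedge}_{\infty}$---are already available, so what remains is the routine verification, in the locally convex setting, that a smooth automorphic action produces a Lie group semidirect product.
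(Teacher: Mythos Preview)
Your proposal is correct and is exactly the routine verification the paper has in mind: the corollary is stated without proof in the paper, being an immediate consequence of the preceding theorem establishing the smoothness of $\pi^{\wedge}_{\infty}$ together with Theorem~\ref{thm:Ninfty_Frechet}. Your decomposition of the multiplication and inversion into smooth pieces is the standard argument that a smooth automorphic action yields a Lie group semidirect product, which is precisely what the paper takes for granted.
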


\subsection{\texorpdfstring{$R$-regularity of $H^{\infty}$}{R-regularity of the group of smooth elements}}\label{section:RROH}
Keeping with the notation introduced so far in this section, we now show that the Fr\'echet--Lie group $N^{\infty}$ is $R$-regular, with a smooth evolution map $$\Evol_{N^{\infty}}\co R(I,\n^{\infty})\to AC_R(I,N^{\infty}).$$ Since for any $\alpha\in R(I,\n^{\infty})$, the map $\iota\circ\alpha$ is in $R(I,\n)$ by Lemma~\ref{lemma:RtoR} (where $\iota\co\n^{\infty}\hookrightarrow\n$), one may identify $R(I,\n^{\infty})$ with a subspace of $R(I,\n)$. It is then natural to consider the restriction of $\Evol_N\co R(I,\n)\to AC_R(I,N)$ to $R(I,\n^{\infty})$ as candidate for $\Evol_{N^{\infty}}$. 

Let $W\subseteq V_1$ be an open $0$-neighbourhood in $\n$ such that $$\Evol_N(R(I,W))\subseteq AC_R(I,\exp_N(V_1)).$$ Let $\widetilde{U}_G\subseteq U_G$ and $W_1\subseteq W$ be identity neighbourhoods such that $$\dot{\pi}(\widetilde{U}_G)W_1\subseteq W$$
and set $$W^{\infty}:=W\cap\n^{\infty}\subseteq V_1^{\infty}\quad\textrm{and}\quad W_1^{\infty}:=W_1\cap\n^{\infty}\subseteq W_{\infty}.$$

Recall that $\Evol_N\co R(I,\n)\to AC_R(I,N)$ is smooth (see Lemma~\ref{lemma:Banach_R_regular}). 
Consider the smooth map
$$\eta\co \Rcal(I,W)\to AC_R(I,V_1), \ \alpha\mapsto \exp_V\inv\circ \Evol_N(\alpha).$$ Note that the corresponding map $$\Rcal(I,W)\to C(I,V_1), \ \alpha\mapsto \eta(\alpha)$$ is also smooth (cf. \S\ref{section:ACM}). Moreover, Lemma~\ref{lemma:naturality_Evol} yields that
\begin{equation}\label{eqn:etapi_pieta}
\dot{\pi}(g)\circ \eta(\alpha)=\eta(\dot{\pi}(g)\circ\alpha)\quad\textrm{for all $g\in \widetilde{U}_G$ and $\alpha\in \Rcal(I,W_1)$.}
\end{equation}

\begin{lemma}\label{lemma:RIsmoothness}
The map $U_G\times R(I,\n^{\infty})\to R(I,\n^{\infty}), (g,\alpha)\mapsto \dot{\pi}(g)\circ\alpha$ is smooth.
\end{lemma}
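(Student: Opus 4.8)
The plan is to reduce the statement to Lemma~\ref{lemma:technical_lemma2} by passing to the exponential chart of $G$ at the identity. Recall from the beginning of \S\ref{section:FLGSOH} that, by \cite[Theorem~4.4 and Prop.~5.4]{Ne10b}, the Fr\'echet space $\n^{\infty}$ carries a topology for which the restricted action $\dot{\pi}^{\wedge}_{\infty}\co G\times\n^{\infty}\to\n^{\infty},(g,v)\mapsto\dot{\pi}(g)v$ is smooth. In particular, for each $g\in G$ the operator $\dot{\pi}(g)$ restricts to a continuous linear self-map of $\n^{\infty}$ (that $\n^{\infty}$ is preserved follows from $\dot{\pi}^{\dot{\pi}(g)v}=\dot{\pi}^v\circ\rho_g$ and smoothness of right translations in $G$), so that $\dot{\pi}(g)\circ\alpha\in R(I,\n^{\infty})$ for every $\alpha\in R(I,\n^{\infty})$ by Lemma~\ref{lemma:RtoR}, and the map in the statement is well defined.

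First I would consider the map
$$f\co V_G\times\n^{\infty}\to\n^{\infty},\quad (x,v)\mapsto\dot{\pi}^{\wedge}_{\infty}(\exp_G(x),v)=\dot{\pi}(\exp_G(x))v,$$
which is smooth, being the composite of $\exp_{V_G}\times\id_{\n^{\infty}}$ (smooth, since $\exp_{V_G}\co V_G\to U_G$ is a diffeomorphism) with $\dot{\pi}^{\wedge}_{\infty}$, and which is linear in $v$ for each fixed $x\in V_G$. Since $\g$, hence $V_G\subseteq\g$, and $\n^{\infty}$ are Fr\'echet, Lemma~\ref{lemma:technical_lemma2} applies with $E_1=\g$, $E_2=F=\n^{\infty}$, $V=V_G$, and gives that
$$\widetilde{f}\co V_G\times R(I,\n^{\infty})\to R(I,\n^{\infty}),\quad (x,\alpha)\mapsto f(x,\alpha(\cdot))=\dot{\pi}(\exp_G(x))\circ\alpha$$
is smooth. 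Composing $\widetilde{f}$ with the smooth map $U_G\times R(I,\n^{\infty})\to V_G\times R(I,\n^{\infty}),(g,\alpha)\mapsto(\exp_{V_G}\inv(g),\alpha)$ (a product of the diffeomorphism $\exp_{V_G}\inv$ with the identity) recovers exactly $(g,\alpha)\mapsto\dot{\pi}(g)\circ\alpha$, which is therefore smooth.

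I do not expect a genuine obstacle here: the substantive input — the Fr\'echet topology on $\n^{\infty}$ making $\dot{\pi}^{\wedge}_{\infty}$ smooth — is already established in \S\ref{section:FLGSOH}, and Gl\"ockner's Lemma~\ref{lemma:technical_lemma2} does the rest. The only point that needs care is that Lemma~\ref{lemma:technical_lemma2} is phrased for open subsets of a locally convex \emph{vector} space, so one must genuinely route through the chart $\exp_{V_G}$ rather than work on the open subset $U_G\subseteq G$ directly; this is precisely what the two chart-substitution steps above accomplish.
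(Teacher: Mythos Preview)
Your proof is correct and follows essentially the same route as the paper: apply Lemma~\ref{lemma:technical_lemma2} to the smooth map $f\co V_G\times\n^{\infty}\to\n^{\infty},(x,v)\mapsto\dot{\pi}(\exp_G(x))v$, then pull back via the diffeomorphism $\exp_{V_G}\co V_G\to U_G$. The paper's version is slightly terser, omitting the explicit well-definedness check you include.
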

\begin{proof}
We deduce from Lemma~\ref{lemma:technical_lemma2} applied to the smooth action map $$f\co V_G\times \n^{\infty}\to \n^{\infty}, \ (x,v)\mapsto \dot{\pi}(\exp_G(x))v$$ that the map $$\widetilde{f}\co V_G\times R(I,\n^{\infty})\to R(I,\n^{\infty}), \ (x,\alpha)\mapsto \dot{\pi}(\exp_G(x))\circ\alpha$$ is smooth. Since $V_G\to U_G,x\mapsto \exp_G(x)$ is a diffeomorphism, the conclusion follows.
\end{proof}

\begin{lemma}\label{lemma:Evolution_Ninfty}
The following assertions hold:
\begin{enumerate}
\item
For all $\alpha\in \Rcal(I,W_1^{\infty})$, the map $\widetilde{U}_G\to C(I,\n),g\mapsto \dot{\pi}(g)\circ \eta(\alpha)$ is smooth.
\item
$\eta(\alpha)(t)\in \n^{\infty}$ for all $t\in I$ and $\alpha\in \Rcal(I,W_1^{\infty})$.
\item
For all $\alpha\in \Rcal(I,W_1^{\infty})$, the map $\eta_{\alpha}\co I\to\n^{\infty}, t\mapsto \eta(\alpha)(t)$ is in $\mathcal L^{\infty}(I,\n^{\infty})$.
\item
The map $\Rcal(I,W_1^{\infty})\to \mathcal{L}^{\infty}(I,\n^{\infty}),\alpha\mapsto \eta(\alpha)$ is smooth.
\item
For all $\alpha\in \Rcal(I,W_1^{\infty})$, the map $\eta_{\alpha}$ is continuous.
\end{enumerate}
\end{lemma}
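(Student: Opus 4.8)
The plan is to derive all five assertions from the equivariance identity (\ref{eqn:etapi_pieta}), namely $\dot{\pi}(g)\circ\eta(\alpha)=\eta(\dot{\pi}(g)\circ\alpha)$ for $g\in\widetilde{U}_G$ and $\alpha\in\Rcal(I,W_1)$, combined with the smoothness of $\eta$, the smoothness of the Banach action (Lemmas~\ref{lemma:RIsmoothness} and \ref{lemma:technical_lemma2}), and Lemma~\ref{lemma:corestriction_smoothbis}. For (1): since $\dot{\pi}(g)$ preserves $\n^{\infty}$ and $\dot{\pi}(\widetilde{U}_G)W_1^{\infty}\subseteq W^{\infty}$, the map $\widetilde{U}_G\to\Rcal(I,W)$, $g\mapsto\dot{\pi}(g)\circ\alpha$, is smooth by Lemma~\ref{lemma:RIsmoothness} composed with the smooth inclusion $\n^{\infty}\hookrightarrow\n$; postcomposing with the smooth map $\eta\co\Rcal(I,W)\to C(I,V_1)\subseteq C(I,\n)$ and using (\ref{eqn:etapi_pieta}) gives that $g\mapsto\dot{\pi}(g)\circ\eta(\alpha)$ is smooth into $C(I,\n)$. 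For (2): fix $t\in I$; composing the smooth map from (1) with the continuous (hence smooth) evaluation $\ev_t\co C(I,\n)\to\n$ shows $\dot{\pi}^{\eta(\alpha)(t)}$ is smooth on $\widetilde{U}_G$, and since $\dot{\pi}^v\circ\lambda_{g_0}=\dot{\pi}(g_0)\circ\dot{\pi}^v$ with $\lambda_{g_0}$ a diffeomorphism of $G$ and $\dot{\pi}(g_0)$ a bounded operator, smoothness near $1_G$ propagates over all of $G$, so $\eta(\alpha)(t)\in\n^{\infty}$.

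For (3), (4), (5) I would first treat a step function $\alpha\in\Rcal(I,W_1^{\infty})$ with $\alpha|_{]t_{j-1},t_j[}\equiv v_j\in W_1^{\infty}$: then $\Evol_N(\alpha)$ is piecewise $t\mapsto\Evol_N(\alpha)(t_{j-1})\exp_N((t-t_{j-1})v_j)$, takes values in $\exp_N(V_1)\cap N^{\infty}=\exp_N(V^{\infty})$ by Lemma~\ref{lemma:nNinfty}, and — using that $N^{\infty}$ is a Fr\'echet--Lie group (Theorem~\ref{thm:Ninfty_Frechet}) with smooth exponential and smooth left translations — $\eta_{\alpha}=\exp_{V^{\infty}}\inv\circ\Evol_N(\alpha)$ is piecewise smooth, so $\eta_{\alpha}\in AC_R(I,\n^{\infty})\subseteq C(I,\n^{\infty})$. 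For general $\alpha\in\Rcal(I,W_1^{\infty})$, choose step functions $\alpha_n\to\alpha$ uniformly in $\n^{\infty}$. Writing $\Phi_{\alpha}(g):=\dot{\pi}(g)\circ\eta(\alpha)$, the identity (\ref{eqn:dpi_d1}) gives $\Psi_k(\eta_{\alpha}(t))(x_1,\dots,x_k)=d^k\Phi_{\alpha}(1_G;x_1,\dots,x_k)(t)$, whence, for $\norm{x_i}\leq 1$ and all $t$, $p_k(\eta_{\alpha_n}(t)-\eta_{\alpha}(t))\leq\norm{d^k\Phi_{\alpha_n}(1_G;\cdot)-d^k\Phi_{\alpha}(1_G;\cdot)}_{\Mult^k(\g,C(I,\n))}$. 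Since $(g,\alpha)\mapsto\Phi_{\alpha}(g)=\eta(\dot{\pi}(g)\circ\alpha)$ is jointly smooth on $\widetilde{U}_G\times\Rcal(I,W_1^{\infty})$, Lemma~\ref{lemma:technical_le_Gl} shows $\alpha\mapsto d^k\Phi_{\alpha}(1_G;\cdot)$ is continuous into $\Mult^k(\g,C(I,\n))$, so the right-hand side tends to $0$; thus $\eta_{\alpha_n}\to\eta_{\alpha}$ uniformly in each seminorm $p_k$. As each $\eta_{\alpha_n}\in C(I,\n^{\infty})$ and this space is complete, $\eta_{\alpha}\in C(I,\n^{\infty})$, which is (5) and yields (3) (continuity forces $\eta_{\alpha}\in\mathcal{L}^{\infty}(I,\n^{\infty})$, and in particular each $\eta_{\alpha}(t)\in\n^{\infty}$). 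Finally (4) follows from Lemma~\ref{lemma:corestriction_smoothbis} with $F=\mathcal{L}^{\infty}(I,\n^{\infty})$, $F_B=\mathcal{L}^{\infty}(I,\n)$, $h=[\alpha\mapsto\eta_{\alpha}]$ (which lands in $F$ by (3)) and $f(g,\alpha)=\dot{\pi}(g)\circ\eta(\alpha)=\eta(\dot{\pi}(g)\circ\alpha)$, which is smooth into $C(I,\n)\hookrightarrow\mathcal{L}^{\infty}(I,\n)$ by the computation in (1).

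The main obstacle is (3)/(5): measurability and continuity of $\eta_{\alpha}$ as a curve in $\n^{\infty}$ do \emph{not} follow formally from the $\n$-valued statements, because the Fr\'echet topology on $\n^{\infty}$ — defined by the operator seminorms $p_k$ on $\Mult^k(\g,\n)$ — is strictly finer, and a curve that is merely strong-operator continuous with uniformly bounded $p_k$ need not be norm-continuous, nor even have separable image in $\n^{\infty}$. This forces the two-step argument above: an explicit description of $\eta_{\alpha}$ on step functions, where the genuine Lie group $N^{\infty}$ can be exploited, together with the uniform seminorm estimate obtained by differentiating the equivariant map $g\mapsto\dot{\pi}(g)\circ\eta(\alpha)$ in the $G$-direction; the delicate point is precisely to verify that this differentiation is compatible with both the step-function approximation and the $\Mult^k(\g,C(I,\n))$-norm.
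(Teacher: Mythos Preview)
Your proposal is correct and closely parallels the paper's proof; the ingredients are the same (the equivariance identity~(\ref{eqn:etapi_pieta}), smoothness of $\eta$ and of the $G$-action on $\Rcal(I,\n^{\infty})$, Lemma~\ref{lemma:corestriction_smoothbis}, and an explicit computation on step functions), but the logical order for (3)--(5) differs.

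The paper argues $(3)\Rightarrow(4)\Rightarrow(5)$. For (3) it observes directly from (1) that the multilinear map $m_k\co\g^k\to C(I,\n)$, $(x_1,\dots,x_k)\mapsto d^k(\dot{\pi}^{\eta_\alpha(\cdot)})(1_G;x_1,\dots,x_k)$ is continuous, hence bounded, which gives $\sup_{t\in I}p_k(\eta_\alpha(t))<\infty$ without any step-function approximation. Then (4) is obtained from Lemma~\ref{lemma:corestriction_smoothbis} exactly as you do, and (5) is deduced by approximating $\alpha$ by step functions and using the \emph{continuity} part of (4) to force uniform convergence of $\eta_{\alpha_n}$ to $\eta_\alpha$ in each $p_k$. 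You instead argue $(5)\Rightarrow(3)\Rightarrow(4)$: you establish the uniform seminorm estimate for (5) directly via Lemma~\ref{lemma:technical_le_Gl} (which is precisely the engine inside Lemma~\ref{lemma:corestriction_smoothbis}), and then read off (3) from continuity. The paper's route to (3) is shorter since it avoids the step-function layer there; your ordering has the minor advantage that continuity in (5) simultaneously settles measurability and separability of the image for~(3), which the paper's boundedness computation alone does not address explicitly. Either way, the substantive analytic step --- differentiating $g\mapsto\eta(\dot{\pi}(g)\circ\alpha)$ in $g$ to control the seminorms $p_k$ uniformly in $t$ and continuously in $\alpha$ --- is identical.
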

\begin{proof}
(1): Let $\alpha\in \Rcal(I,W_1^{\infty})$. Since the inclusion map $\Rcal(I,W^{\infty})\hookrightarrow \Rcal(I,W)$ is smooth by Lemma~\ref{lemma:RtoR}, the map $\widetilde{U}_G\to \Rcal(I,W),g\mapsto \dot{\pi}(g)\circ\alpha$ is also smooth by Lemma~\ref{lemma:RIsmoothness}. The claim then follows from (\ref{eqn:etapi_pieta}) and the smoothness of the map $\Rcal(I,W)\to C(I,\n),\alpha\mapsto \eta(\alpha)$. 

(2): Let $t\in I$ and $\alpha\in \Rcal(I,W_1^{\infty})$. We have to show that the map $\widetilde{U}_G\to \n,g\mapsto \dot{\pi}(g)\eta(\alpha)(t)$ is smooth. But this follows from (1) and the smoothness of the evaluation map $C(I,\n)\to \n,\beta\mapsto \beta(t)$.

(3): Let $\alpha\in \Rcal(I,W_1^{\infty})$. By definition, $\eta_{\alpha}\in\mathcal L^{\infty}(I,\n^{\infty})$ if $\sup_{t\in I}{p_k(\eta_{\alpha}(t))}<\infty$ for all $k\in\NN$. Fix $k\in\NN$. By (1), the map $\widetilde{U}_G\to C(I,\n),g\mapsto \dot{\pi}^{\eta_{\alpha}(\cdot)}(g)$ is smooth. Thus the multilinear map 
$$m_k\co\g^k\to C(I,\n), \ (x_1,\dots,x_k)\mapsto d^k(\dot{\pi}^{\eta_{\alpha}(\cdot)})(1_G;x_1,\dots,x_k)$$ is continuous, hence bounded. It then follows from (\ref{eqn:dpi_d1}) that
\begin{equation*}
\begin{aligned}
\sup_{t\in I}{p_k(\eta_{\alpha}(t))}&=\sup\{\norm{d^k(\dot{\pi}^{\eta_{\alpha}(t)})(1_G;x_1,\dots,x_k)} \ : \ t\in I, \ \norm{x_1},\dots,\norm{x_k}\leq 1\}\\
&=\sup\{\norm{m_k(x_1,\dots,x_k)}_{C(I,\n)} \ : \ \norm{x_1},\dots,\norm{x_k}\leq 1\}<\infty,
\end{aligned}
\end{equation*}
as desired.

(4): The map $$f\co \widetilde{U}_G\times \Rcal(I,W_1^{\infty})\to \mathcal{L}^{\infty}(I,\n), \ (g,\alpha)\mapsto \dot{\pi}^{\eta_{\alpha}(\cdot)}(g)=\eta(\dot{\pi}(g)\circ\alpha)$$ is smooth since it is the composition of the smooth maps $\widetilde{U}_G\times \Rcal(I,W_1^{\infty})\to \Rcal(I,W_{\infty}),(g,\alpha)\mapsto \dot{\pi}(g)\circ\alpha$ (see Lemma~\ref{lemma:RIsmoothness}) and $\Rcal(I,W_{\infty})\hookrightarrow \Rcal(I,W)\to C(I,\n)\hookrightarrow \mathcal{L}^{\infty}(I,\n),\alpha\mapsto \eta(\alpha)$. Hence (4) follows from Lemma~\ref{lemma:corestriction_smoothbis}.

(5): Let $\alpha\in \Rcal(I,W_1^{\infty})$, and let us show that $\eta_{\alpha}$ is continuous. This is clear if $\alpha$ is a constant function, that is, if $\alpha=c_v\co I\to W_1^{\infty},t\mapsto v$ for some $v\in W_1^{\infty}$, since then $\Evol_N(\alpha)(t)=\exp_N(tv)$, and hence $\eta_{\alpha}(t)=tv$ for all $t\in I$. Assume next that $\alpha$ is a step function, say $\alpha|_{]t_{j-1},t_j[}=c_{v_j}|_{]t_{j-1},t_j[}$ for $j=1,\dots,n$, where $0=t_0<t_1<\dots <t_n=1$ is a subdivision of $I$ and $v_1,\dots,v_n\in W_1^{\infty}$. Then for all $j\in\{1,\dots,n\}$ and $t\in [t_{j-1},t_j]$, we have
$$\eta_{\alpha}(t)=t_1v_1* (t_2-t_1)v_2*\dots * (t_{j-1}-t_{j-2})v_{j-1}*(t-t_{j-1})v_j,$$
so that the continuity of $\eta_{\alpha}$ follows from the continuity of the local multiplication in $\n^{\infty}$ (Lemma~\ref{lemma:nlocalLiegr}).

Let now $(\alpha_n)_{n\in\NN}$ be a sequence of step functions in $\Rcal(I,W_1^{\infty})$ converging to $\alpha$. Let $p$ be a continuous seminorm on $\n^{\infty}$. 
Fix $\varepsilon>0$. By (4), there exists some $N\in\NN$ such that $$\sup_{s\in I}{p(\eta_{\alpha}(s)-\eta_{\alpha_N}(s))}\leq\varepsilon/3.$$ The above discussion also yields some $\delta>0$ such that $p(\eta_{\alpha_N}(t)-\eta_{\alpha_N}(t_0))\leq\varepsilon/3$ whenever $|t-t_0|<\delta$. Thus for $t\in I$ with $|t-t_0|<\delta$, we have
\begin{equation*}
\begin{aligned}
p(\eta_{\alpha}(t)-\eta_{\alpha}(t_0))\leq p(\eta_{\alpha}(t)-\eta_{\alpha_N}(t))+p(\eta_{\alpha_N}(t)-\eta_{\alpha_N}(t_0))+p(\eta_{\alpha_N}(t_0)-\eta_{\alpha}(t_0))\leq\varepsilon.
\end{aligned}
\end{equation*}
Hence $\eta_{\alpha}$ is continuous, concluding the proof of the lemma. 
\end{proof}

\begin{lemma}\label{lemma:localevol_Ninfty}
For all $\alpha\in \Rcal(I,W_1^{\infty})$, the map $\eta_{\alpha}\co I\to\n^{\infty},t\mapsto \eta(\alpha)(t)$ is in $AC_R(I,\n^{\infty})$. Moreover, $R(I,W_1^{\infty})\to AC_R(I,N^{\infty}),\alpha\mapsto \Evol_N(\alpha)$ is a (local) evolution map for $N^{\infty}$.
\end{lemma}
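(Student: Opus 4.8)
The plan is to obtain $\Evol_N(\alpha)$ as the evolution curve of $N^{\infty}$ by pushing the one of $N$ through the exponential chart; the only real work is to show that the resulting chart‑level derivative is regulated for the \emph{finer} Fréchet topology of $\n^{\infty}$, after which everything is a routine transfer along the inclusion $\iota\co N^{\infty}\hookrightarrow N$.

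First I would fix $\alpha\in\Rcal(I,W_1^{\infty})$. Since $\Evol_N(R(I,W))\subseteq AC_R(I,\exp_N(V_1))$ and $W_1^{\infty}\subseteq W$, the curve $\eta_{\alpha}=\eta(\alpha)=\exp_V^{-1}\circ\Evol_N(\alpha)$ takes values in $V_1$; by Lemma~\ref{lemma:Evolution_Ninfty}(2),(5) it is in fact a continuous map $I\to\n^{\infty}$ with $\eta_{\alpha}(I)\subseteq V_1^{\infty}$ and $\eta_{\alpha}(0)=0$. Read in the chart $\exp_V$, the defining identity $\delta_N(\Evol_N(\alpha))=\alpha$ for the Banach--Lie group $N$ says exactly that $\eta_{\alpha}\in AC_R(I,\n)$ with $\eta_{\alpha}'=[\gamma_0]$, where $\gamma_0\co I\to\n$ is given by $\gamma_0(t)=\kappa(\eta_{\alpha}(t),\alpha(t))$ and $\kappa\co V_1\times\n\to\n$, $\kappa(v,x):=\frac{d}{ds}\big|_{s=0}\big(v*(sx)\big)$, is the partial derivative of the local multiplication $*$ of $\n$ in its second argument at $(v,0)$; this $\kappa$ is smooth and linear in $x$.

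Next I corestrict $\kappa$ to $\n^{\infty}$. By Lemma~\ref{lemma:nlocalLiegr} the local multiplication $*$ restricts to a smooth map $V_1^{\infty}\times V_1^{\infty}\to\n^{\infty}$, so differentiating in the second slot yields a smooth map $\kappa^{\infty}\co V_1^{\infty}\times\n^{\infty}\to\n^{\infty}$, still linear in the second variable, with $\iota\circ\kappa^{\infty}=\kappa|_{V_1^{\infty}\times\n^{\infty}}$ (for $\iota\co\n^{\infty}\hookrightarrow\n$). Hence $\gamma_0(t)=\kappa^{\infty}(\eta_{\alpha}(t),\alpha(t))\in\n^{\infty}$ for every $t$, and I claim $[\gamma_0]\in R(I,\n^{\infty})$. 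The argument mimics Claim~2 in the proof of Theorem~\ref{thm:Rreg_Glinear}: approximate $\alpha$ uniformly by $\n^{\infty}$‑valued step functions $\sigma_n$; on each subinterval where $\sigma_n\equiv v$ one has $\kappa^{\infty}\circ(\eta_{\alpha},\sigma_n)=\kappa^{\infty}(\,\cdot\,,v)\circ\eta_{\alpha}$, a composition of continuous maps, so $\kappa^{\infty}\circ(\eta_{\alpha},\sigma_n)$ is piecewise continuous, hence regulated; and $\kappa^{\infty}\circ(\eta_{\alpha},\sigma_n)\to\kappa^{\infty}\circ(\eta_{\alpha},\alpha)$ uniformly in every defining seminorm $p_k$, because continuity of $\kappa^{\infty}$ together with linearity in the second argument and compactness of $\eta_{\alpha}(I)\subseteq V_1^{\infty}$ yields estimates $p_k(\kappa^{\infty}(v,x))\le C_k\,q_k(x)$ uniform in $v\in\eta_{\alpha}(I)$, whence $\sup_t p_k\big(\kappa^{\infty}(\eta_{\alpha}(t),\sigma_n(t)-\alpha(t))\big)\le C_k\sup_t q_k(\sigma_n(t)-\alpha(t))\to0$. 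Thus $\gamma_0$ represents an element of $R(I,\n^{\infty})$.

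Since the $\n^{\infty}$‑valued weak integral of the regulated function $\gamma_0$ exists (Lemma~\ref{lemma:thm_fond_CDI}) and is carried to its $\n$‑valued integral by $\iota$, it equals $\eta_{\alpha}$; hence $\eta_{\alpha}\in AC_R(I,\n^{\infty})$. As $\exp_N|_{V^{\infty}}$ is a chart of the Fréchet--Lie group $N^{\infty}$ around $1$ (Theorem~\ref{thm:Ninfty_Frechet}) and $\eta_{\alpha}(I)\subseteq V_1^{\infty}\subseteq V^{\infty}$, this gives $\Evol_N(\alpha)=\exp_N\circ\eta_{\alpha}\in AC_R(I,N^{\infty})$, with value $1_{N^{\infty}}$ at $0$. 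The inclusion $\iota\co N^{\infty}\hookrightarrow N$ is a morphism of Lie groups with injective differential $\Lie(\iota)\co\n^{\infty}\hookrightarrow\n$; since $\iota\circ\Evol_N(\alpha)=\Evol_N(\alpha)$ as $N$‑valued curves, the naturality formula~(\ref{eqn:naturality_delta}) gives $\alpha=\delta_N(\Evol_N(\alpha))=\Lie(\iota)\circ\delta_{N^{\infty}}(\Evol_N(\alpha))$, so $\delta_{N^{\infty}}(\Evol_N(\alpha))=\alpha$. For uniqueness one runs the transfer backwards: if $\eta\in AC_R(I,N^{\infty})$ satisfies $\delta_{N^{\infty}}(\eta)=\alpha$ and $\eta(0)=1$, then $\iota\circ\eta\in AC_R(I,N)$ has the same logarithmic derivative and initial value as $\Evol_N(\alpha)$, so $\iota\circ\eta=\Evol_N(\alpha)$ and hence $\eta=\Evol_N(\alpha)$. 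As $R(I,W_1^{\infty})$ is an open $0$‑neighbourhood of $R(I,\n^{\infty})$, this exhibits $\alpha\mapsto\Evol_N(\alpha)$ as the (unique) local evolution map of $N^{\infty}$. The one genuinely delicate step is the upgrade from ``$AC_R$ into the Banach space $\n$'' to ``$AC_R$ into the Fréchet space $\n^{\infty}$'', i.e.\ controlling $\gamma_0$ in all the $p_k$ simultaneously; this is exactly where the a priori smoothness of the local multiplication of $N^{\infty}$ (Lemma~\ref{lemma:nlocalLiegr}) and the compactness of $\eta_{\alpha}(I)$ in $\n^{\infty}$ come in, everything else being routine functoriality along $\iota$.
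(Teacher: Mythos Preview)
Your proof is correct and follows essentially the same route as the paper's: identify the chart-level derivative of $\eta_\alpha$ as $\kappa^{\infty}\circ(\eta_\alpha,\alpha)$ (the paper writes this as $f\circ(\eta_\alpha,\alpha)$ with $f(x,v)=T_1(\lambda_{\exp_N(x)})v$, which is your $\kappa^{\infty}$), show it lies in $\Rcal(I,\n^{\infty})$, integrate using Lemma~\ref{lemma:thm_fond_CDI}, and compare with the $\n$-valued integral. The only noteworthy difference is that where you carry out the step-function approximation and equicontinuity estimate by hand, the paper simply invokes Lemma~\ref{lemma:RtoR}, which packages exactly that argument; your explicit verification of $\delta_{N^{\infty}}(\Evol_N(\alpha))=\alpha$ via naturality along $\iota$ is also more detailed than the paper's one-line conclusion, but the substance is the same.
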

\begin{proof}
Recall from \S\ref{section:LG_NeJap} that the map $$f\co V_1^{\infty}\times\n^{\infty}\to\n^{\infty}, \ (x,v)\mapsto \exp_N(x).v=T_1(\lambda_{\exp_N(x)})v$$ is continuous (here we identify each fiber $T_nN^{\infty}$ of $TN^{\infty}$ for $n\in \exp_N(V_1^{\infty})$ with the space $\n^{\infty}$, using the chart $(\exp_N(V_1^{\infty}),\exp_V\inv)$). Since $\eta_{\alpha}\co I\to \n^{\infty}$ is continous by Lemma~\ref{lemma:Evolution_Ninfty}(5) and $\alpha\in \Rcal(I,\n^{\infty})$, we deduce from Lemma~\ref{lemma:RtoR} that $$\gamma:=f\circ(\eta_{\alpha},\alpha)\co I\to\n^{\infty}, \ s\mapsto \Evol_N(\alpha)(s).\alpha(s)$$ is in $\Rcal(I,\n^{\infty})$. Since $\n^{\infty}$ is a Fr\'echet space, Lemma~\ref{lemma:thm_fond_CDI} then implies that the weak integrals $\widetilde{\eta}_{\alpha}(t):=\int_0^t{\gamma(s)ds}$ exist in $\n^{\infty}$ for all $t\in I$ and that $\widetilde{\eta}_{\alpha}\in AC_R(I,\n^{\infty})$.
On the other hand, by definition of $\Evol_N$ (cf. \S\ref{section:RReg}), we have $$\eta_{\alpha}(t)=\int_0^t{\gamma(s)ds}$$ for all $t\in I$, where the above weak integrals are considered in $\n$ (c.f. \S\ref{section:integration}). By unicity of the weak integral in $\n$, we conclude that $\eta_{\alpha}=\widetilde{\eta}_{\alpha}\in AC_R(I,\n^{\infty})$, as desired.

Finally, since $\exp_N\circ \eta(\alpha)=\Evol_N(\alpha)$ for all $\alpha\in \Rcal(I,W_1^{\infty})$, the above discussion also implies that $\Evol_N|_{R(I,W_1^{\infty})}\co R(I,W_1^{\infty})\to AC_R(I,N^{\infty})$ is a (local) evolution map for $N^{\infty}$.
\end{proof}

\begin{theorem}\label{thm:Rregular_Ninfty}
The Fr\'echet--Lie group $N^{\infty}$ is $R$-regular, with smooth evolution map $$\Evol_{N^{\infty}}\co R(I,\n^{\infty})\to AC_R(I,N^{\infty}), \ \alpha\mapsto \Evol_N(\alpha).$$
\end{theorem}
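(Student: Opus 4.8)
The plan is to assemble the statement from the local results already proved, whose real content is hidden in Lemmas~\ref{lemma:Evolution_Ninfty} and~\ref{lemma:localevol_Ninfty}. First, by Lemma~\ref{lemma:localevol_Ninfty} the map $\mathcal{R}(I,W_1^{\infty})\to AC_R(I,N^{\infty})$, $\alpha\mapsto\Evol_N(\alpha)$, is a local evolution map for the Fr\'echet--Lie group $N^{\infty}$, so $N^{\infty}$ is locally $R$-semiregular and hence, by Lemma~\ref{lemma:5.25}(1), $R$-semiregular; write $\Evol_{N^{\infty}}\co R(I,\n^{\infty})\to AC_R(I,N^{\infty})$ for its unique evolution map. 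To identify it with $\Evol_N$, I would use that the inclusion $\iota\co N^{\infty}\hookrightarrow N$ is a group homomorphism which is smooth: indeed $\iota\circ(\exp_N|_{\n^{\infty}})$ coincides with the composite of the smooth inclusion $\n^{\infty}\hookrightarrow\n$ with $\exp_N\co\n\to N$, while $\exp_N|_{\n^{\infty}}$ is a local diffeomorphism onto $N^{\infty}$ by Theorem~\ref{thm:Ninfty_Frechet}, and $\Lie(\iota)$ is the inclusion $\n^{\infty}\hookrightarrow\n$. Then $\iota\circ\Evol_{N^{\infty}}(\alpha)$ lies in $AC_R(I,N)$ and, by naturality of the logarithmic derivative (see~(\ref{eqn:naturality_delta})), has logarithmic derivative $\alpha$ and value $1_N$ at $0$, so it equals $\Evol_N(\alpha)$ by uniqueness of solutions.

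For the remaining assertion, that $\Evol_{N^{\infty}}$ is smooth, Lemma~\ref{lemma:5.25}(2) reduces the problem to showing that $N^{\infty}$ is \emph{locally} $R$-regular with a smooth evolution map. Using the chart $\exp_V$ to identify an open identity neighbourhood of $N^{\infty}$ with the local Lie group $(V^{\infty},V_1^{\infty}\times V_1^{\infty},*,0)$ of Lemma~\ref{lemma:nlocalLiegr} (which carries the global chart $\mathrm{id}$), and taking the open $0$-neighbourhood $\Omega=R(I,W_1^{\infty})$ of $R(I,\n^{\infty})$, it remains to prove that
$$\eta\co R(I,W_1^{\infty})\to AC_R(I,\n^{\infty}),\quad\alpha\mapsto\eta_{\alpha}:=\exp_V\inv\circ\Evol_N(\alpha),$$
is smooth. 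Under the isomorphism $AC_R(I,\n^{\infty})\cong\n^{\infty}\times R(I,\n^{\infty})$, $\zeta\mapsto(\zeta(0),\zeta')$, and since $\eta_{\alpha}(0)=0$ and $\eta_{\alpha}'=[f\circ(\eta_{\alpha},\alpha)]$ by Lemma~\ref{lemma:localevol_Ninfty}, where $f\co V_1^{\infty}\times\n^{\infty}\to\n^{\infty}$, $(x,v)\mapsto\exp_N(x).v$, is the left-multiplication map of the Fr\'echet--Lie group $N^{\infty}$ read in the chart $\exp_V$ (hence smooth, with $f(x,\cdot)$ linear), smoothness of $\eta$ is equivalent to smoothness of $\alpha\mapsto[f\circ(\eta_{\alpha},\alpha)]$ as a map into $R(I,\n^{\infty})$.

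I would obtain the latter from two inputs. On the one hand, $\alpha\mapsto\eta_{\alpha}$ is smooth as a map $R(I,W_1^{\infty})\to C(I,\n^{\infty})$: by Lemma~\ref{lemma:Evolution_Ninfty}(4) it is smooth into $\mathcal{L}^{\infty}(I,\n^{\infty})$, while parts~(2) and~(5) of the same lemma show that its values are continuous $\n^{\infty}$-valued curves, which permits the corestriction to $C(I,\n^{\infty})$. On the other hand, the superposition map $C(I,V_1^{\infty})\times R(I,\n^{\infty})\to R(I,\n^{\infty})$, $(\xi,\beta)\mapsto f\circ(\xi,\beta)$, is smooth by \cite[2.2]{Gl15} (the result already used to prove Lemma~\ref{lemma:technical_lemma2}), since $f$ is smooth and linear in its second argument. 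Composing $\alpha\mapsto(\eta_{\alpha},\alpha)$ — smooth into $C(I,V_1^{\infty})\times R(I,\n^{\infty})$ by the first input together with smoothness of the open inclusion $R(I,W_1^{\infty})\hookrightarrow R(I,\n^{\infty})$ — with this superposition map then gives the claim. (Alternatively, once $\Evol_{N^{\infty}}$ is known to be continuous, Lemma~\ref{lemma:5.20} reduces smoothness of the evolution map to smoothness as a map into $C(I,N^{\infty})$, which in the chart is again the smoothness of $\alpha\mapsto\eta_{\alpha}$ into $C(I,\n^{\infty})$ furnished by Lemma~\ref{lemma:Evolution_Ninfty}.)

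The main obstacle, and the point where all the earlier technical work pays off, is the passage from the Banach space $\n$ to the strictly finer Fr\'echet space $\n^{\infty}$: the $R$-regularity of $N$ only controls the evolution with respect to the $\n$-topology, and the difficulty — already resolved in Lemmas~\ref{lemma:corestriction_smoothbis}, \ref{lemma:Evolution_Ninfty} and~\ref{lemma:localevol_Ninfty} by exploiting the defining seminorms $p_k$ of $\n^{\infty}$ and the corestriction lemma — is to upgrade this first to $\n^{\infty}$-valued and then to $R(I,\n^{\infty})$-valued smoothness. Granting those lemmas, the present theorem is essentially bookkeeping: checking that $\eta_{\alpha}$ genuinely lands in $AC_R(I,\n^{\infty})$ rather than merely in $AC_R(I,N)$, and that its derivative term $[f\circ(\eta_{\alpha},\alpha)]$ varies smoothly with $\alpha$ in the finer topology.
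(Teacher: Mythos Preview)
Your proposal is correct and your overall architecture matches the paper's: local $R$-semiregularity from Lemma~\ref{lemma:localevol_Ninfty}, globalisation via Lemma~\ref{lemma:5.25}, identification of $\Evol_{N^{\infty}}$ with the restriction of $\Evol_N$, and reduction of smoothness to the local chart. For the smoothness step the paper takes precisely your parenthetical alternative: it invokes Lemma~\ref{lemma:5.20} to reduce to smoothness of $\alpha\mapsto\eta_\alpha$ as a map into $C(I,\n^{\infty})$, and then observes that the proof of Lemma~\ref{lemma:Evolution_Ninfty}(4) goes through verbatim with $C(I,\n)$ and $C(I,\n^{\infty})$ in place of $\mathcal{L}^{\infty}(I,\n)$ and $\mathcal{L}^{\infty}(I,\n^{\infty})$ (note that Lemma~\ref{lemma:corestriction_smoothbis} is stated for all three target spaces precisely for this purpose). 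Your primary route through the $AC_R$-decomposition $\zeta\mapsto(\zeta(0),\zeta')$ and the superposition result \cite[2.2]{Gl15} is also valid; the only point deserving a word of justification is the corestriction of a smooth map from $\mathcal{L}^{\infty}(I,\n^{\infty})$ to $C(I,\n^{\infty})$, since $\mathcal{L}^{\infty}$ is non-Hausdorff---but this is fine because $C(I,\n^{\infty})$ is complete for the same seminorms, so the difference quotients (which lie in $C$) converge there. The paper sidesteps this by redoing the argument directly rather than corestricting. For the identification $\Evol_{N^{\infty}}=\Evol_N|_{R(I,\n^{\infty})}$, the paper simply appeals to uniqueness of solutions in $N$, which is marginally quicker than your route via naturality of $\delta$ under the smooth inclusion $\iota$, though both amount to the same thing.
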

\begin{proof}
By Lemma~\ref{lemma:localevol_Ninfty}, the Fr\'echet--Lie group $N^{\infty}$ is locally $R$-semiregular (with local evolution map $\Evol_{N}|_{R(I,W_1^{\infty})}$). It then follows from Lemma~\ref{lemma:5.25}(1) that $N^{\infty}$ is $R$-semiregular. Moreover, the unicity of the evolution map for $N$ implies that $\Evol_{N^{\infty}}:=\Evol_{N}|_{R(I,\n^{\infty})}$ is the (global) evolution map for $N^{\infty}$. It thus remains to prove that $\Evol_{N^{\infty}}$ is smooth. By Lemma~\ref{lemma:5.25}(2), it is sufficient to show that $\Evol_{N^{\infty}}|_{R(I,W_1^{\infty})}$ is smooth or, equivalently, that the map $R(I,W_1^{\infty})\to AC_R(I,\n^{\infty}),\alpha\mapsto \eta(\alpha)$ is smooth. By Lemma~\ref{lemma:5.20}, this is in turn equivalent to the smoothness of the map $R(I,W_1^{\infty})\to C(I,\n^{\infty}),\alpha\mapsto \eta(\alpha)$. But this can be established exactly as in the proof of Lemma~\ref{lemma:Evolution_Ninfty}(4), replacing $\mathcal{L}^{\infty}(I,\n^{\infty})$ by $C(I,\n^{\infty})$ and $\mathcal{L}^{\infty}(I,\n)$ by $C(I,\n)$.
\end{proof}

\begin{corollary}
The Fr\'echet--Lie group $H^{\infty}=N^{\infty}\rtimes_{\pi^{\infty}}G$ is $R$-regular, with a smooth evolution map. In particular, $H^{\infty}$ has the strong Trotter and commutator properties.
\end{corollary}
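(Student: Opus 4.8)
The plan is to mimic the proof of Theorem~\ref{thm:Rreg_Glinear}, now in the Fr\'echet--Lie setting, and to upgrade the conclusion from mere continuity to smoothness of the evolution map. Recall that $G$ is $R$-regular with a smooth evolution map $\Evol_G$ (Lemma~\ref{lemma:Banach_R_regular}), that $N^{\infty}$ is $R$-regular with a smooth evolution map $\Evol_{N^{\infty}}=\Evol_N|_{R(I,\n^{\infty})}$ (Theorem~\ref{thm:Rregular_Ninfty}), that $H^{\infty}$ is a Fr\'echet--Lie group (previous corollary), and that the tangent action $\dot{\pi}^{\wedge}_{\infty}\co G\times\n^{\infty}\to\n^{\infty}$ is smooth and linear in the second variable. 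Since the manifold underlying $H^{\infty}$ is $N^{\infty}\times G$, we have $AC_R(I,H^{\infty})=AC_R(I,N^{\infty})\times AC_R(I,G)$ and $R(I,\hh^{\infty})=R(I,\n^{\infty})\times R(I,\g)$, where $\hh^{\infty}:=T_1H^{\infty}=\n^{\infty}\times\g$. For $\alpha\in R(I,\n^{\infty})$ and $\beta\in R(I,\g)$ one sets
\[ f_{\alpha,\beta}\co I\to\n^{\infty},\quad t\mapsto\dot{\pi}(\Evol_G(\beta)(t))\alpha(t). \]

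First I would check that $f_{\alpha,\beta}\in R(I,\n^{\infty})$. As $\Evol_G(\beta)$ is continuous with compact image, a Lebesgue number argument gives a partition $0=t_0<\dots<t_m=1$ and $g_1,\dots,g_m\in G$ with $\Evol_G(\beta)([t_{j-1},t_j])\subseteq g_j\exp_G(V_G)$ for a fixed symmetric exponential chart $U_G=\exp_G(V_G)$ of $G$ at $1$; writing $\Evol_G(\beta)(t)=g_j\exp_G(\zeta_j(t))$ there, with $\zeta_j:=\exp_{V_G}\inv(g_j\inv\Evol_G(\beta)|_{[t_{j-1},t_j]})\co[t_{j-1},t_j]\to\g$ continuous, one has $f_{\alpha,\beta}(t)=F_j(\zeta_j(t),\alpha(t))$ on $[t_{j-1},t_j]$, where $F_j\co\g\times\n^{\infty}\to\n^{\infty}$, $(x,v)\mapsto\dot{\pi}(g_j\exp_G x)v$, is continuous and linear in $v$ (here the key point is that $\exp_G$ is globally defined on $\g$, so that $F_j$ is defined on the whole vector space $\g$). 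Hence $f_{\alpha,\beta}|_{[t_{j-1},t_j]}$ is regulated by Lemma~\ref{lemma:RtoR}, and since regulatedness is local, $f_{\alpha,\beta}\in R(I,\n^{\infty})$. The $AC_R$--Carath\'eodory computation of Claim~3 in the proof of Theorem~\ref{thm:Rreg_Glinear} now carries over verbatim, with $(N,\pi)$ replaced by $(N^{\infty},\pi_{\infty})$, and shows that $H^{\infty}$ is $R$-semiregular with
\[ \Evol_{H^{\infty}}\co R(I,\hh^{\infty})\to AC_R(I,H^{\infty}),\quad(\alpha,\beta)\mapsto\big(\Evol_{N^{\infty}}(f_{\alpha,\beta}),\Evol_G(\beta)\big). \]

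It then remains to prove that $\Evol_{H^{\infty}}$ is smooth; by Lemma~\ref{lemma:5.25}(2) it suffices to do so on a $0$-neighbourhood. Using continuity of $\Evol_G\co R(I,\g)\to C(I,G)$ and openness of $AC_R(I,U_G)$, choose a $0$-neighbourhood $\Omega_G\subseteq R(I,\g)$ with $\Evol_G(\Omega_G)\subseteq AC_R(I,U_G)$, so that $\Omega:=R(I,\n^{\infty})\times\Omega_G$ is a $0$-neighbourhood in $R(I,\hh^{\infty})$; for $\beta\in\Omega_G$ the curve $\zeta_{\beta}:=\exp_{V_G}\inv\circ\Evol_G(\beta)\in AC_R(I,V_G)$ depends smoothly on $\beta$ (since $AC_R(I,\exp_{V_G}\inv)$ is a diffeomorphism), and $f_{\alpha,\beta}(t)=\dot{\pi}(\exp_G\zeta_{\beta}(t))\alpha(t)$. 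As $\g\times\n^{\infty}\to\n^{\infty}$, $(x,v)\mapsto\dot{\pi}(\exp_G x)v$, is smooth and linear in $v$, \cite[2.2]{Gl15} (the result underlying Lemma~\ref{lemma:technical_lemma2}) yields that $C(I,\g)\times R(I,\n^{\infty})\to R(I,\n^{\infty})$, $(\zeta,\alpha)\mapsto\dot{\pi}(\exp_G\zeta(\cdot))\alpha(\cdot)$, is smooth; composing with $\beta\mapsto\zeta_{\beta}$ shows $(\alpha,\beta)\mapsto f_{\alpha,\beta}$ smooth on $\Omega$, hence $\Evol_{H^{\infty}}|_{\Omega}$ is smooth, using smoothness of $\Evol_{N^{\infty}}$ and $\Evol_G$. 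Thus $H^{\infty}$ is locally $R$-regular with a smooth evolution map, and therefore $R$-regular with a smooth evolution map by Lemma~\ref{lemma:5.25}(2). Being a Fr\'echet--Lie group with this property, $H^{\infty}$ has the strong Trotter property by \cite[Theorem~I]{Gl15}, and hence the Trotter and commutator properties by \cite[Theorem~H]{Gl15}.

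I expect the main obstacle to be the smoothness of $(\alpha,\beta)\mapsto f_{\alpha,\beta}$ as a map between spaces of regulated functions: the action $\dot{\pi}^{\wedge}_{\infty}$ is defined on $G\times\n^{\infty}$ with $G$ not a vector space, so Lemma~\ref{lemma:technical_lemma2} cannot be applied to it directly. Restricting $\beta$ to a $0$-neighbourhood $\Omega_G$ (so that $\Evol_G(\beta)$ stays inside one exponential chart) and pulling the $G$-variable back along the globally defined map $\exp_G$ is precisely what brings the situation into the scope of \cite[2.2]{Gl15}; Lemma~\ref{lemma:5.25}(2) then upgrades this local statement to the desired global one. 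A secondary, routine point is to verify that the $AC_R$--Carath\'eodory form of the defining IVP on $H^{\infty}$ decouples into the ones for $G$ and for $N^{\infty}$ exactly as in Theorem~\ref{thm:Rreg_Glinear}.
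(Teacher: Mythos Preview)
Your proof is correct but takes a genuinely different route from the paper's. The paper's argument is a two-line application of black boxes: since $N^{\infty}$ is $R$-regular with smooth evolution (Theorem~\ref{thm:Rregular_Ninfty}) and $G$ is $R$-regular with smooth evolution (Lemma~\ref{lemma:Banach_R_regular}), and since $R$-regularity with smooth evolution is an \emph{extension property} (\cite[Theorem~G]{Gl15}), the semidirect product $H^{\infty}=N^{\infty}\rtimes_{\pi_{\infty}}G$ inherits it; the Trotter and commutator properties then follow from \cite[Theorem~I]{Gl15}.

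You instead re-derive the evolution map explicitly by replaying the proof of Theorem~\ref{thm:Rreg_Glinear} in the Fr\'echet setting, obtaining $\Evol_{H^{\infty}}(\alpha,\beta)=(\Evol_{N^{\infty}}(f_{\alpha,\beta}),\Evol_G(\beta))$ and then establishing smoothness of $(\alpha,\beta)\mapsto f_{\alpha,\beta}$ on a $0$-neighbourhood via \cite[2.2]{Gl15}, with Lemma~\ref{lemma:5.25}(2) promoting this to global smoothness. This is longer but more self-contained: it does not require Gl\"ockner's general extension theorem, and it makes the evolution map of $H^{\infty}$ completely explicit. The trade-off is that the paper's route immediately generalises (any smooth extension of $R$-regular Fr\'echet--Lie groups is $R$-regular), whereas your argument is tailored to the semidirect product structure. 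Your handling of the main technical point---pulling the $G$-variable back through $\exp_G$ so that \cite[2.2]{Gl15} applies on a vector space---is exactly the right manoeuvre, and your use of Lemma~\ref{lemma:5.25}(2) mirrors how the paper itself uses it in the proof of Theorem~\ref{thm:Rregular_Ninfty}.
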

\begin{proof}
The $R$-regularity of $H^{\infty}$ follows from Theorem~\ref{thm:Rregular_Ninfty}, together with the fact that $R$-regularity is an extension property (see \cite[Theorem~G]{Gl15}). The second statement then follows from \cite[Theorem~I]{Gl15}.
\end{proof}

\def\cprime{$'$}
\providecommand{\bysame}{\leavevmode\hbox to3em{\hrulefill}\thinspace}
\providecommand{\MR}{\relax\ifhmode\unskip\space\fi MR }
\providecommand{\MRhref}[2]{%
  \href{http://www.ams.org/mathscinet-getitem?mr=#1}{#2}
}
\providecommand{\href}[2]{#2}

\end{document}